\documentclass[11pt,a4]{article}

\usepackage{subfigure}
\usepackage{graphicx}
\usepackage{sidecap}
\usepackage{amsmath, amsthm, amssymb}
\usepackage{array}
\usepackage{url}
\usepackage{float}
\usepackage{enumitem}

\usepackage{psfrag}
\usepackage{color}
\usepackage{bm}

\usepackage[vcentering,dvips]{geometry}
\geometry{papersize={215mm,270mm},total={160mm,240mm}}


\newtheorem{theorem}{Theorem}[section]
\newtheorem{lemma}[theorem]{Lemma}
\newtheorem{proposition}[theorem]{Proposition}
\newtheorem{corollary}[theorem]{Corollary}
\newtheorem{definition}[theorem]{Definition}
\newtheorem{example}[theorem]{Example}

\newtheorem{remark}[theorem]{Remark}

\newcommand{\E}{{\mathbb{E}} \hspace{.2mm}}
\renewcommand{\P}{{\mathbb{P}}}
\newcommand{\R}{{\mathbb{R}}}
\newcommand{\N}{{\mathbb{N}}}
\newcommand{\Z}{{\mathbb{Z}}}
\newcommand{\C}{{\mathbb{C}}}
\newcommand{\T}{{\mathbb{T}}}

 \newcommand{\scalprod}[2]{\left\langle #1,#2 \right\rangle}
\newcommand{\vertiii}[1]{{\left\vert\kern-0.25ex\left\vert\kern-0.25ex\left\vert #1 
    \right\vert\kern-0.25ex\right\vert\kern-0.25ex\right\vert}}
\newcommand{\Id}{\operatorname{Id}}    

\usepackage{enumitem}

\newcommand{\vct}[1]{\bm{#1}}
\newcommand{\mtx}[1]{\bm{#1}}

\newcommand{\A}{\mtx{A}}
\newcommand{\BB}{\mtx{B}}
\newcommand{\vv}{\vct{v}}
\newcommand{\x}{\vct{x}}
\newcommand{\X}{\vct{X}}
\newcommand{\veps}{\vct{\epsilon}}
\newcommand{\w}{\vct{\omega}}
\newcommand{\y}{\vct{y}}
\newcommand{\z}{\vct{z}}
\newcommand{\supp}{\operatorname{supp}}
\newcommand{\conv}{\operatorname{conv}}

\newcommand{\triple}{\hspace{.5mm} | \hspace{-0.5mm} | \hspace{-0.5mm}  | \hspace{.5mm}}



\begin{document}

\title{Interpolation via weighted $\ell_1$ minimization}

\author{Holger Rauhut\thanks{H.\ Rauhut is with RWTH Aachen University, Lehrstuhl C f{\"u}r Mathematik (Analysis), Templergraben 55, 52062 Aachen, Germany, \rm{rauhut@mathc.rwth-aachen.de}}, Rachel Ward \thanks{R.\ Ward is with the Mathematics Department at the University of Texas at Austin, 2515 Speedway, Austin, Texas, 78712,  \rm{rward@ma.utexas.edu}}}

\maketitle

\begin{abstract}
Functions of interest are often smooth and sparse in some sense, and both priors should be taken into account when interpolating sampled data.  Classical linear interpolation methods are effective under strong regularity assumptions, but cannot incorporate nonlinear sparsity structure.   At the same time, nonlinear methods such as $\ell_1$ minimization can reconstruct sparse functions from very few samples, but do not necessarily encourage  smoothness.  Here we show that \emph{weighted} $\ell_1$ minimization effectively merges the two approaches, promoting both sparsity and smoothness in reconstruction.  More precisely, we provide specific choices of weights in the $\ell_1$ objective to achieve approximation rates for functions with coefficient sequences in weighted $\ell_p$ spaces with $p \leq 1$.  We consider implications of these results for spherical harmonic and polynomial interpolation, in the univariate and multivariate setting.    Along the way, we extend concepts from compressive sensing such as the restricted isometry property and null space property to accommodate weighted sparse expansions; these developments should be of independent interest in the study of structured sparse approximations and continuous-time compressive sensing problems. 
\end{abstract}

\noindent
{{\bf Key words:} bounded orthonormal systems, compressive sensing, interpolation, weighted sparsity, $\ell_1$ minimization}

\medskip

\noindent


\begin{center}
{\emph{Dedicated to the memory of Ted Odell}}
\end{center}

\section{Introduction}
This paper aims to merge classical smoothness-based methods for function interpolation with modern
sparsity constraints and nonlinear reconstruction methods.  We will focus on the classical interpolation problem, where given sampling points and associated function values,  we wish to find a suitably well-behaved function agreeing with the data up to some error. Classically, ``well-behaved" has been measured in terms of smoothness: the more derivatives a function has, the stronger the reconstruction rate obtained using linear methods.   More recently, areas such as compressive sensing have focused on sparsity rather than smoothness as a measure of complexity.  Results in compressive sensing imply that a function with sparse representation in a known basis can be reconstructed from a small number of suitably randomly distributed sampling points, using nonlinear techniques such as convex optimization or greedy methods.  In reality, functions of interest may only be somewhat smooth and somewhat sparse.  This is particularly apparent in high-dimensional problems, where sparse and low-degree tensor product expansions are often preferred according to the sparsity-of-effects principle, which states that most models are principally
governed by main effects and low order interactions.   In such situations, we might hope to synthesize smoothness and sparsity-based approaches.

Recall that the smoothness of a function tends to be reflected in the rapid decay of its Fourier series, and vice versa.  Smoothness can then be viewed as a structured sparsity constraint, with low-order Fourier basis functions being more likely to contribute to the best $s$-term approximation.  We will demonstrate that such structured sparse expansions are imposed by weighted $\ell_p$ coefficient spaces in the range $0 < p \leq 1$.  Accordingly, we will use weighted $\ell_1$ minimization, a convex surrogate for weighted $\ell_p$ minimization with $p < 1$, as our reconstruction method of choice.

\bigskip

\noindent The contributions of this paper are as follows.

\begin{enumerate}[leftmargin=10pt]

\item {\bf Function approximation.} We provide the first rigorous analysis for function interpolation using weighted $\ell_1$ minimization.  We show that with the appropriate choice of weights, one obtains approximation rates for functions with coefficient sequences lying in weighted $\ell_p$ spaces with $0 < p \leq 1$.  In the high-dimensional setting, our rates are better than those possible by classical linear interpolation methods, and require only mild smoothness assumptions.  For suitable choices of the weights, the number of sampling points required by weighted $\ell_1$ minimization to achieve a desired rate grows only linearly or even just logarithmically with the ambient dimension, rather than exponentially. We illustrate the improvement of weighted $\ell_1$ minimization compared to unweighted $\ell_1$ minimization on several specific examples, including spherical harmonic interpolation and tensorized Chebyshev and Legendre polynomial interpolation. We expect that the results for polynomial interpolation should have applications in uncertainty quantification \cite{uq} and, in particular, in the computation of generalized {polynomial chaos} expansions \cite{cds1,cds2}.

\item {\bf Weighted sparsity.} In order to derive stable and robust recovery guarantees for weighted $\ell_1$ minimization, we generalize the notion of restricted isometry property in compressive sensing to the weighted restricted isometry property, and also develop notions of weighted sparsity which take into account prior information on the likelihood that any particular index contributes to the sparse expansion.  These developments should be of independent interest as tools that can be used more generally for the analysis of structured sparsity models and continuous-time sparse approximation problems.  

\item {\bf Structured random matrices for compressive sensing.}  
It is by now well established \cite{cata06, ruve08, ra10} that an $s$-sparse trigonometric polynomial of maximal degree $N$ can be recovered efficiently from its values at $m \asymp s \log^4(N)$ sampling points drawn independently from the uniform measure on their domain. 
More generally, such near-optimal sparse recovery results hold whenever the underlying sparsity basis is uniformly bounded like the trigonometric system, so as to be incoherent with point samples \cite{ra10}.   However, these results become weak if the $L_{\infty}$ norms of the functions in the orthonormal system grow with the maximal degree $N$.  If this growth is not too sharp, one may apply a preconditioning technique proposed in  \cite{rw10} to transform the system into a uniformly bounded one.  However, often these preconditioning techniques are not enough to make the basis functions uniformly bounded.  Here, we show that as long as lower-degree polynomials are preferred in the sparse expansions under consideration, one may still derive sparse recovery guarantees using weighted $\ell_1$ minimization with weights which grow at least as quickly as the $L_{\infty}$ norms of the functions they correspond to.

\end{enumerate}

\noindent We will assume throughout that the sampling points for interpolation are drawn from {a suitable probability distribution}, and we focus only on the setting where interpolation points are chosen in advance, independent of the target function.  

\subsection{Organization} The remainder of the paper is organized as follows.
In Sections 1.2 and 1.3, we introduce weighted $\ell_p$ spaces and 
{discuss} how they promote smoothness and sparsity. 
In Section 1.4 we state two of the main interpolation results, and in Section 1.5 we introduce the concept of weighted restricted isometry property for a linear map. Section 1.6 discusses previous work on weighted $\ell_1$ minimization, and in Section 1.7 we compare our main results to those possible with linear reconstruction methods. 
 In Section 2 we discuss the implications of our main results for spherical harmonic and tensorized polynomial bases, and provide a numerical illustration.  We further analyze concepts pertaining to weighted sparsity in Section 3, and in Section 4 we elaborate on the weighted restricted isometry property and weighted null space property.  In Section 5 we show that matrices arising from orthonormal systems have the weighted restricted isometry property as long as the weights are matched to the $L_{\infty}$ norms of the function system, and we finish in Section 6 by presenting our main results on interpolation via weighted $\ell_1$ minimization. 
 
\subsection{Weighted sparsity}

We will work with coefficient sequences $\x$ indexed by a set $\Lambda$ 
{which} may be finite or countably infinite.  We will associate to a vector $\w = (\omega_{j})_{j \in \Lambda}$ of weights $\omega_j \geq 1$ the weighted $\ell_p$ spaces
\begin{eqnarray}
\label{weighted_norm}
 \ell_{\omega,p} := \left\{ \x = (x_j)_{j \in \Lambda},  \quad  \|\x\|_{\omega, p} := \Big( \sum_{j \in \Lambda} \omega^{2-p}_{j} |x_{j}|^p \Big)^{1/p} < \infty \right\}, \quad 0 < p \leq 2.
\end{eqnarray}
Also central to our analysis will be the weighted $\ell_0$-``norm'', $$\| \x \|_{\omega, 0} = \sum_{\{j: x_{j} \neq 0\}} \omega^2_{j},$$
which counts the squared weights of the non-zero entries of $\x$.  We also define {the} weighted cardinality of a set $S$ to be 
$\omega(S) := \sum_{j \in S} \omega^2_{j}.$   Since $\omega_{j} \geq 1$ by assumption, we always have $\omega(S) \geq |S|$, the cardinality of $S$.
When $\w \equiv 1$, these weighted norms reproduce the standard $\ell_p$ norms, in which case we use the standard $\ell_p$-notation $\| \cdot \|_{p}$.  The exponent $2-p$ in the definition of the spaces $\ell_{\omega,p}$ is somewhat uncommon but turns out to be the most convenient definition for our purposes.  For instance, the exponent must scale this way in order to apply a Cauchy-Schwarz inequality to weighted $s$-sparse vectors:
$$
\| \x  \|_{\omega,1} = \sum_{j \in \Lambda}\omega_j |x_j|  \leq \sqrt{\sum_{j \in \Lambda}\omega_j^2} \sqrt{\sum_{j \in \Lambda} |x_j|^2}   = \sqrt{\omega(\Lambda)} \| \x \|_2 \leq \sqrt{s} \| \x\|_2, \quad \forall \x: \| \x \|_{\omega,0} \leq s
$$
Consequently, error bounds we present will be in terms of the $\ell_{\omega,p}$ norms scaled as such; for $p=1$ we have weighted $\ell_1$ error bounds, for $p=2$ we have unweighted $\ell_2$ error bounds.

For $\x \in \ell_{\omega,p}$ and for a subset $S$ of the index set $\Lambda$, we define $\x_S \in \ell_{\omega,p}$ as the restriction of $\x$ to $S$.   
For $s \geq 1$, the error of best weighted $s$-term approximation
of the vector $\x \in \ell_{\omega,p}$ is defined as
\begin{equation}
\label{weightedapprox}
\sigma_s(\x)_{\omega, p} = \inf_{\z : \|\z\|_{\omega, 0} \leq s} \|\x-\z\|_{\omega, p}.
\end{equation}
Unlike unweighted best $s$-term approximations, weighted approximations of vectors are not straightforward 
to compute in general.  Nevertheless, we will show in Section~\ref{stechkin} how to approximate $\sigma_s(\x)_{\omega,p}$ using a quantity that can easily computed from $\x$ by sorting and thresholding.

\subsection{Weighted $\ell_p$ spaces for smoothness and sparsity} 
The weighted $\ell_p$ coefficient spaces introduced in the previous section can be used to define weighted function spaces.
For a bounded domain ${\cal D}$, let $\psi_j : {\cal D} \to \C${, $j \in \Lambda$,} be a sequence of functions indexed by the set $\Lambda$ which are orthonormal with respect to the probability measure $\nu$ on ${\cal D},$ that is, 
\[
\int_{\cal D} \psi_j(t) \overline{\psi_k(t)} d\nu(t) = \delta_{j,k} \quad \mbox{ for all } j,k.
\]
We will call 
{$\nu$} the orthogonalization measure associated to the system $( \psi_j )_{j \in \Lambda}$.  
The function spaces we consider are the weighted quasi-normed spaces,
\[
  \label{our_spaces}
  S_{\omega,p} := \left\{ f(t) = \sum_{j \in \Lambda} x_j \psi_j(t), \hspace{2mm}  t \in {\cal D}, \hspace{2mm} \triple f \triple_{\omega,p} := \| \vct{x} \|_{\omega,p}  < \infty \right\}, \quad 0 < p \leq 1,
\]
again with $\omega_j \geq 1$ implicitly assumed.  The best $s$-term approximation to $f \in S_{\omega,p}$  is the function
\begin{equation}
\label{bestapprox}
f_{S} = \sum_{j \in S} x_j \psi_j,
\end{equation}
where $S \subset \Lambda$ is the set realizing the weighted best $s$-term approximation of $\x$, and the best weighted $s$-term approximation error is
\begin{equation}
\label{weightedapproxF}
\sigma_s(f)_{\omega, p} = \sigma_s(\x)_{\omega, p}.
\end{equation}
The following Stechkin-type estimate, described in more detail in Section \ref{stechkin}, can be used to bound the best $s$-term approximation of a vector by an appropriate weighted vector norm:
\begin{equation}
\label{stech1}
\sigma_{s}(\x)_{\omega,q} \leq \big( s - \|\w\|_\infty^2 \big)^{1/q-1/p} \|\x\|_{\omega,p}, \quad p < q \leq 2, \quad \| \w \|_{\infty}^2 < s.
\end{equation}
This estimate illustrates how a small $\ell_p-$norm for $p < 1$ supports small sparse approximation error.   Conditions of the form $\| \w \|^2_\infty < s$ are somewhat natural in the context of weighted sparse approximations, as those indices with weights $\omega_{j}^2 > s$ cannot possibly contribute to best weighted $s$-term approximations.  
This means that we can 
usually replace a countably-infinite set $\Lambda$ by the finite subset $\Lambda_0 \subset \Lambda$ corresponding to indices with 
weights $\omega_j^2 < s$ {(or, for technical reasons, $\omega_j^2 \leq s/2$)}, if such a finite set exists.  

\subsection{Interpolation via weighted $\ell_1$ minimization}

In treating the interpolation problem, we first assume that the index set $\Lambda$ is finite with $N = | \Lambda |$.
Given sampling points $t_1,\hdots,t_m \in {\cal D}$ and $f= \sum_{j \in \Lambda} x_j \psi_j$ 
we can write the vector of sample values $\y = (f(t_\ell))_{\ell=1,\hdots,m}$ succinctly in matrix form as $\y = \A \x$,
where $\A$ is the \emph{sampling matrix} with entries
\[
\label{sample_mat}
A_{\ell,j} = \psi_j(t_\ell), \quad \ell=1,\hdots,m, \quad j \in \Lambda. 
\]
Better sets of interpolation points are usually associated with better condition number for the sampling matrix $\A$.   In our theorems, the sampling points are drawn independently from the orthogonalization measure $\nu$ associated to the orthonormal system $(\psi_j)$; as a consequence, the random matrix $\A \A^*$, properly normalized, is the identity matrix in expectation.

\vspace{3mm}
  
\noindent We will consider the setting where the number of samples $m$ is smaller than the ambient dimension $N$, in which case there are infinitely many functions $g \in S_{\omega,p}$ which interpolate the given data.  From within this infinite set, we would like to pick out 
{the} function of minimal quasi-norm $\triple g \triple_{\omega,p}$.  However, this minimization problem only becomes tractable once  $p=1$ whence the quasi-norm becomes a norm.   As a convex relaxation to the weighted quasi-norm $p < 1$, we consider for interpolation the function $f^{\sharp}(t) = \sum_{j \in \Lambda} x_j^{\sharp} \psi_j(t)$ whose coefficient vector $\x^{\sharp}$ is the solution of the weighted $\ell_1$ minimization program
\[
\label{eq:wl1}
\min \| \z \|_{\omega,1} \mbox{ subject to } \A \z = \y
\]
The equality constraint in the $\ell_1$ minimization ensures that the function $f^\sharp$ interpolates
$f$ at the points $t_\ell$, that is, $f^\sharp(t_\ell) = f(t_\ell)$, $\ell=1,\hdots,m$. 
Let us give the following result on interpolation via weighted $\ell_1$ minimization with respect to $\| \cdot \|_{\omega,1}$.

\begin{theorem}\label{thm:intro1}
Suppose $(\psi_j)_{j \in \Lambda}$ is an orthonormal system of finite size $| \Lambda | = N$, and consider weights $\omega_j \geq \|\psi_j\|_\infty$. 
{For $s \geq 2 \|  \w \|_{\infty}^2$},
fix a number of samples 
\begin{equation}\label{bound:RIP:m}
m \geq c_0 s \log^3(s) \log(N),
\end{equation}
and suppose that $t_\ell$, $\ell=1,\hdots,m,$ are sampling points drawn i.i.d. from the orthogonalization measure associated to $(\psi_j)_{j \in \Lambda}$.  With probability exceeding $1 - N^{-\log^3(s)}$, the following holds for all 
functions $f  = \sum_{j \in \Lambda} x_j \psi_j$: given samples $y_\ell = f(t_\ell)$, $\ell=1,\hdots,m$,  let $\x^\sharp$ be the solution of
\[
\min \|\z\|_{\omega,1} \mbox{ subject to } \A \z = \y
\]
and set $f^\sharp(t) = \sum_{j\in \Lambda} x_j^\sharp \psi_j(t)$. Then the following error rates are satisfied: 
\begin{align}
\| f - f^\sharp\|_{L_\infty}  \leq \triple f - f^\sharp \triple_{\omega,1}  & \leq  c_1 \sigma_s(f)_{\omega,1},\notag\\
\| f - f^\sharp \|_{L_2} & \leq d_1 \sigma_s(f)_{\omega,1} / \sqrt{s}. \notag
\end{align}

\noindent Here $\sigma_s(f)_{\omega,1}$ is the best $s$-term approximation error of $f$ defined in \eqref{weightedapproxF}.
The constants $c_0$,$c_1,$ and $d_1$ are universal, independent of everything else.
\end{theorem}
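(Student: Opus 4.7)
The plan is to reduce this to the two main technical tools developed in the body of the paper: (i) the weighted restricted isometry property (weighted RIP) for sampling matrices of bounded orthonormal systems (Section 5), and (ii) the fact that weighted RIP implies robust weighted $\ell_1$ recovery via a weighted null space property (Section 4). Once these coefficient-level recovery bounds are in hand, the function-level statements follow by cheap and essentially one-line translations using orthonormality and the assumption $\omega_j \geq \|\psi_j\|_\infty$.

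First, I would invoke the main weighted RIP result of Section 5 applied to the random sampling matrix with entries $A_{\ell,j} = \psi_j(t_\ell)$. Because $t_\ell$ are drawn i.i.d.\ from the orthogonalization measure $\nu$ and the weights dominate the $L_\infty$ norms of the basis functions, the sample count $m \geq c_0 s \log^3(s)\log(N)$ ensures that $\tfrac{1}{\sqrt{m}}\A$ has weighted restricted isometry constant $\delta_{2s,\omega} \leq \delta^\ast$ for some fixed threshold $\delta^\ast$ (e.g.\ $\delta^\ast = 1/3$ or whatever sharp constant is required for the null space argument), with failure probability at most $N^{-\log^3(s)}$. The hypothesis $s \geq 2\|\w\|_\infty^2$ plays the expected role of ensuring that the class of weighted $2s$-sparse vectors is non-degenerate and that the Section 5 argument applies.

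Next, I would apply the Section 4 implication: weighted RIP of order $2s$ with constant below $\delta^\ast$ yields the stable and robust weighted $\ell_1$ recovery bounds
\begin{align*}
\|\x - \x^\sharp\|_{\omega,1} &\leq c_1 \sigma_s(\x)_{\omega,1}, \\
\|\x - \x^\sharp\|_{2} &\leq d_1 \sigma_s(\x)_{\omega,1}/\sqrt{s},
\end{align*}
valid for any $\x$ and the corresponding weighted $\ell_1$ minimizer $\x^\sharp$ of $\min \|\z\|_{\omega,1}$ subject to $\A\z = \A\x$. These are the coefficient-space analogues of the two claimed inequalities.

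Finally, I would pass to function norms. For the $L_\infty$ bound, by the triangle inequality and the hypothesis $\omega_j \geq \|\psi_j\|_\infty$,
\[
|f(t) - f^\sharp(t)| \leq \sum_{j\in\Lambda} |x_j - x_j^\sharp|\, \|\psi_j\|_\infty \leq \sum_{j\in\Lambda} \omega_j |x_j - x_j^\sharp| = \|\x - \x^\sharp\|_{\omega,1},
\]
which yields $\|f-f^\sharp\|_{L_\infty} \leq \triple f - f^\sharp \triple_{\omega,1}$; combining with the first coefficient bound and recalling $\sigma_s(f)_{\omega,1} = \sigma_s(\x)_{\omega,1}$ gives the first claim. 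For the $L_2$ bound, orthonormality of $(\psi_j)$ with respect to $\nu$ gives $\|f-f^\sharp\|_{L_2} = \|\x - \x^\sharp\|_2$, and the second coefficient bound finishes the proof. The real work sits entirely in Steps 1 and 2; in particular, establishing the weighted RIP with sample complexity $m \asymp s \log^3(s)\log(N)$ is the main obstacle, since it requires extending the Rudelson–Vershynin / Dudley-type chaining estimates used in the unweighted bounded orthonormal system setting to the weighted notion of sparsity introduced in Section 1.2.
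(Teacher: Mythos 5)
Your proposal follows exactly the paper's route: establish the weighted RIP for the sampling matrix via the Section 5 chaining argument, pass to the weighted robust null space property and the resulting coefficient-space $\ell_{\omega,1}$ and $\ell_2$ recovery bounds (the paper uses $\delta_{\omega,3s}<1/3$ rather than order $2s$, but this is the kind of constant-level detail you correctly flag as flexible), and then translate to $L_\infty$ and $L_2$ exactly as you describe. No gaps; this is the paper's own proof in outline.
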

This interpolation theorem is nonstandard in two respects: the number of samples $m$ required to achieve a prescribed rate scales only logarithmically with the size of the system, and the error guarantees are given by best $s$-term approximations in weighted coefficient norms.  

The constraint on the weights $\omega_j \geq \| \psi_j \|_{\infty}$ allows us to bound the $L_{\infty}$ norm by the weighted $\ell_1$ coefficient norm: for a function $f \in S_{\omega,p}$,
$$\| f \|_{L_{\infty}} =\sup_{t \in {\cal D}}  \left| \sum_{n=-\infty}^{\infty} x_n \psi_n(t)  \right|  \leq  \sup_{t \in {\cal D}} \sum_{n=-\infty}^{\infty} | x_n | |\psi_n(t) |  \leq  \sum_{n=-\infty}^{\infty} | x_n | \omega_n =  \triple f \triple_{\omega,1},$$
and so if $f_0 = \sum_{j \in S} x_j \psi_j$ with $|S| = s$ is the best $s$-term approximation to $f$ in the $L_{\infty}$ norm, 
then by the Stechkin-type estimate \eqref{stech1} 
with $q=1$ we have
$$
\| f - f_0 \|_{L_\infty} \leq \triple f - f_0 \triple_1 \leq  (s- \| \w \|_{\infty}^2)^{1-1/p} \triple f \triple_{\omega,p}, \quad \quad p < 1.
$$
By choosing weights so that $\omega_j \geq \| \psi_j \|_{L_{\infty}} + \| \psi_j' \|_{L_{\infty}}$, one may also arrive at bounds of the form $\| f \|_{L_\infty} + \| f' \|_{L_{\infty}}  \leq \triple f  \triple_1$, reflecting how steeper weights encourage more smoothness.  We do not pursue such a direction in this paper, but this may be interesting for future research.

In Section \ref{alltogether} we will prove a more general version of Theorem \ref{thm:intro1} showing robustness of weighted $\ell_1$ minimization to noisy samples, 
$\y = \A \x + \vct{\xi}$.
Using this robustness to noise, we will be able to treat the case where the index set $\Lambda$ is countably infinite, by regarding the values $f(t_\ell), \ell=1,\dots,m,$ as noisy samples of a finite-dimensional approximation to $f$.  For example, this will allow us to show the following result.

\begin{theorem}\label{thm:infRIPprob}
Suppose $(\psi_j)_{j\in \Lambda}$ is an orthonormal system, consider weights $\omega_j \geq \|\psi_j\|_\infty$, and for a parameter $s \geq 1$, let $N = | \Lambda_0 |$ where $\Lambda_0 = \{ j : \omega_j^2 \leq s/2\}$.  Consider a number of samples 
$$
m \geq c_0 s \log^3(s) \log(N). 
$$
Consider a fixed function $f  = \sum_{j \in \Lambda} x_j \psi_j$ with $\triple f \triple_{\omega,1} < \infty$.  Draw sampling points $t_\ell$, $\ell=1,\hdots,m,$ independently from the orthogonalization measure associated to $(\psi_j)_{j\in \Lambda}$.  Let  $\A \in \C^{m \times N}$ be the sampling matrix with entries $A_{\ell,j} = \psi_j(t_\ell)$.  Let $\eta > 0$ and $\varepsilon \geq 0$ be such that 
{$\eta \leq \triple f - f_{\Lambda_0} \triple_{\omega,1} \leq \eta (1+\varepsilon)$}.
From samples $y_\ell = f(t_\ell)$, $\ell=1,\hdots,m$, let $\x^\sharp$ be the solution of
\[
\min \|\z\|_{\omega,1} \mbox{ subject to }  \| \A \z - \y \|_2 \leq \left(\frac{m}{s} \right)^{1/2} \eta
\]
and set $f^\sharp(t) = \sum_{j\in \Lambda_0} x_j^\sharp \psi_j(t)$. Then with probability exceeding $1 - N^{-\log^3(s)}$, 
\begin{align}
\| f - f^\sharp  \|_{L_{\infty}} \leq \triple f - f^\sharp \triple_{\omega,1} & \leq c_1 \sigma_s(f)_{\omega,1}, \notag\\
\|f-f^\sharp\|_{L^2}  & \leq d_1\sigma_s(f)_{\omega,1} / \sqrt{s}.\notag
\end{align}
\noindent Above, $c_0$ is an absolute constant and $c_1, d_1$ are constants which depend only on the distortion $\varepsilon$.
\end{theorem}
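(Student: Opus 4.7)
The plan is to reduce the claim to the finite-dimensional noisy-recovery theorem proved in Section~\ref{alltogether} (a robust version of Theorem~\ref{thm:intro1}) by truncating $f$ to $\Lambda_0$ and treating the tail as measurement noise. Writing $f = f_{\Lambda_0} + g$ with $g = \sum_{j \notin \Lambda_0} x_j \psi_j$, the samples take the form $y_\ell = (\A \x_{\Lambda_0})_\ell + g(t_\ell)$, so the noise vector $\vct{\xi} := (g(t_\ell))_{\ell=1}^m$ encodes exactly the tail contribution, and the problem becomes sparse recovery of the $N$-dimensional vector $\x_{\Lambda_0}$ from noisy linear measurements. A first application of the weighted RIP result of Section~5 to the finite orthonormal system $(\psi_j)_{j \in \Lambda_0}$ shows that, under the sample-count \eqref{bound:RIP:m} with $N = |\Lambda_0|$, the sampling matrix $\A$ has the weighted RIP at sparsity level $s$ with probability at least $1 - N^{-\log^3(s)}$.

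Next I would control the noise vector $\vct{\xi}$. The pointwise bound $|g(t_\ell)| \le \triple g \triple_{\omega,1} \le (1+\varepsilon)\eta$ is immediate from $\omega_j \ge \|\psi_j\|_\infty$, and the crucial $L_2$ estimate
\[
 \|g\|_{L_2}^2 \;=\; \sum_{j \notin \Lambda_0}|x_j|^2 \;\le\; \max_{j \notin \Lambda_0}\tfrac{|x_j|}{\omega_j}\,\sum_{j \notin \Lambda_0}\omega_j|x_j| \;\le\; \tfrac{2}{s}\triple g \triple_{\omega,1}^2 \;\le\; \tfrac{2(1+\varepsilon)^2\eta^2}{s}
\]
uses precisely the defining inequality $\omega_j^2 > s/2$ on $\Lambda_0^c$. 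Since $\mathbb{E}\|\vct{\xi}\|_2^2 = m\|g\|_{L_2}^2$, Bernstein's inequality for i.i.d.\ bounded variables yields $\|\vct{\xi}\|_2 \le C_\varepsilon \sqrt{m/s}\,\eta$ on an event of probability comparable to that of the RIP event. Absorbing the factor $C_\varepsilon$ into the final constants (which are allowed to depend on $\varepsilon$), the true vector $\x_{\Lambda_0}$ is feasible for the weighted $\ell_1$ program with noise budget $\tau = \sqrt{m/s}\,\eta$.

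On the intersection of the RIP and noise events I would then invoke the noisy recovery statement from Section~\ref{alltogether}: weighted RIP plus noise budget $\tau$ yields $\triple \x_{\Lambda_0} - \x^\sharp \triple_{\omega,1} \lesssim \sigma_s(\x_{\Lambda_0})_{\omega,1} + \sqrt{s}\,\tau/\sqrt{m}$ and $\|\x_{\Lambda_0} - \x^\sharp\|_2 \lesssim \sigma_s(\x_{\Lambda_0})_{\omega,1}/\sqrt{s} + \tau/\sqrt{m}$, which with our choice of $\tau$ give additive contributions $\eta$ and $\eta/\sqrt{s}$ respectively. Triangle inequalities $\|f - f^\sharp\|_{L_\infty} \le \triple \x_{\Lambda_0} - \x^\sharp \triple_{\omega,1} + \triple g \triple_{\omega,1}$ and $\|f-f^\sharp\|_{L_2} \le \|\x_{\Lambda_0} - \x^\sharp\|_2 + \|g\|_{L_2}$, together with the comparison $\sigma_s(\x_{\Lambda_0})_{\omega,1} + \triple g \triple_{\omega,1} \asymp \sigma_s(f)_{\omega,1}$ (which follows from $\omega_j^2 > s/2$ on $\Lambda_0^c$, restricting any weighted best-$s$-term support to $\Lambda_0$ up to at most one extra index), then deliver the two stated error rates.

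The main obstacle I expect is the noise concentration step: a crude $L_\infty$ bound alone would only give $\|\vct\xi\|_2 \lesssim \sqrt{m}\,\eta$, which is too large by a factor $\sqrt{s}$; the quadratic tail estimate $\|g\|_{L_2}^2 \le 2\triple g \triple_{\omega,1}^2/s$ is what is really needed, and it is exactly what forces the threshold $s/2$ in the definition of $\Lambda_0$. The resulting Bernstein constant is what causes $c_1, d_1$ to depend on $\varepsilon$ while $c_0$ remains absolute.
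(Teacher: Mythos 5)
Your proposal follows essentially the same route as the paper's proof: truncate to $\Lambda_0$, treat the tail samples $(f_R(t_\ell))_\ell$ as noise, use $\omega_j^2 > s/2$ off $\Lambda_0$ to get the key bound $\|f_R\|_{L_2}^2 \le \tfrac{2}{s}\triple f_R \triple_{\omega,1}^2$, control $\|\vct{\xi}\|_2$ by Bernstein's inequality, and then invoke the finite-dimensional noisy recovery theorem together with $\sigma_s(f)_{\omega,1} = \sigma_s(f_0)_{\omega,1} + \|f_R\|_{\omega,1}$. The only differences are cosmetic (your H\"older-type derivation of the $L_2$ tail bound versus the paper's direct weighting), so the argument matches.
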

Several remarks should be made about Theorem \ref{thm:infRIPprob}.
\begin{enumerate}
\item The minimization problem in Theorem \ref{thm:infRIPprob} requires knowledge of, or at least an estimate of, the tail bound $\triple f - f_{\Lambda_0} \triple_{\omega,1}$.  It is possible to avoid this using greedy or iterative methods; see \cite{jo2013iterative} for one such method, a weighted version of the iterative hard thresholding algorithm \cite{blda09}.  In subsequent corollaries of this result, we will assume exact knowledge of the tail bound, $\eta = \triple f - f_{\Lambda_0} \triple_{\omega,1},$ for simplicity of presentation.
\item If the size $N$ of $\Lambda_0$ is polynomial in $s$, then the number of samples reduces to $m \geq C s \log^4(s)$ to achieve reconstruction with probability $> 1 - s^{-\log^3(s)}$.  
\end{enumerate}

\subsection{Weighted restricted isometry property}

One of the main tools we use in the proofs of Theorems \ref{thm:intro1} and \ref{thm:infRIPprob} is the \emph{weighted restricted isometry property} ($\w$-RIP) for a linear map $\A: \C^N \rightarrow \C^m$, which generalizes the concept of restricted isometry property in compressive sensing.

\begin{definition}[$\w$-RIP constants]
\label{def:weighted:RIP}
For $\A \in \C^{m \times N}$, $s \geq 1$, and weight $\omega$, the $\w$-RIP constant $\delta_{\omega,s}$ 
associated to 
$\A$ is the smallest number for which
\begin{equation}\label{def:wRIP}
(1-\delta_{\omega,s}) \|\x\|_{2}^2 \leq \| \A \x \|_2^2 \leq (1+\delta_{\omega,s}) \| \x \|_2^2
\end{equation}
for all $\x \in \C^N$ with $\| \x \|_{\omega,0} = \sum_{j \in \text{supp}(\x)} \omega_j^2 \leq s$.
\end{definition}
For weights $\w \equiv 1$, the $\w$-RIP reduces to the standard RIP,  as introduced in \cite{cata06, ct05}.
For general weights $\omega_j \geq 1$, {the} $\w$-RIP is a weaker assumption for a matrix than the standard RIP, as it requires the map to act as a near-isometry on a smaller set.

\begin{example} \emph{
Consider weights of the general form $\omega_j = j^{\alpha/2}$ with $\alpha > 0$.   We may then take $N = s^{1/\alpha}$, as even single indices $j > N$ have weighted cardinality exceeding $s$.  Observe that if $\| \x \|_{\omega,0} \leq s,$ then $\x$ is supported on an index set of (unweighted) cardinality at most $\alpha^{1/\alpha} s^{1/(\alpha+1)}$.  Following the approach of \cite{bddw08}, see also \cite{Baraniuk:2010, bd09}, taking a union bound and applying covering arguments, one may argue that an $m \times N$  i.i.d.\ subgaussian random matrix has the $\w$-RIP with high probability once
$$
m =\mathcal{O}\left( \alpha^{(1/\alpha)-1}s^{1/(\alpha+1)} \log{s}\right).
$$
This is a smaller number of measurements than the $m =\mathcal{O}\big( s \log(N/s) \big)$ lower bound required for the unweighted RIP.  This observation should be of independent interest, but we focus in this paper on random matrices formed by sampling orthonormal systems.}
\end{example}

\subsection{Related work on weighted $\ell_1$ minimization}
Weighted $\ell_1$ minimization has been analyzed previously in the compressive sensing literature.  Weighted $\ell_1$ minimization with weights $\omega_j \in \{0,1\}$ was introduced independently in the papers \cite{amin2009weighted, bmp07, vl09} and extended further in \cite{j10}. The paper \cite{fmsy12} seems to be the first to provide conditions under which weighted $\ell_1$ minimization is stable and robust under weaker sufficient conditions than the analogous conditions for standard $\ell_1$ minimization for general weights.  Improved sufficient conditions were recently provided for this setting in \cite{yb13}.  Recovery guarantees were also provided in \cite{peng2014weighted}, also with a focus on function interpolation with applications to polynomial chaos expansions.  Still, the analysis in all of these works is based on the standard restricted isometry property and this limits the extent to which their recovery guarantees improve on those for unweighted $\ell_1$ minimization.

Weighted $\ell_1$ minimization has also been considered under probabilistic models.  In \cite{xu10}, the vector indices are partitioned into two sets, and indices on each set have different probabilities $p_1$, $p_2$ of being nonzero; the weights are constant on each of the two sets. 
The papers \cite{amin2009weighted, khajehnejad2010analyzing} provide further analysis in this setting where the entries of the unknown vector fall into two or more sets, each with a different probability of being nonzero. Finally, the paper \cite{mp13} considers a full Bayesian model, where certain probabilities are associated with each component of the signal in such a way that the probabilities vary in a ``continuous" manner across the indices.  All of these works take a Grassmann angle approach, and the analysis is thus restricted to the setting of Gaussian matrices and to noiseless measurements.

\subsection{Comparison with classical interpolation results}

Although weighted $\ell_1$ minimization was recently investigated empirically in \cite{fk12,rasc14,peng2014weighted} for multivariate polynomial interpolation in the context of polynomial chaos expansions, weighted $\ell_p$ spaces, for $0 < p \leq 1$, are nonstandard in the interpolation literature.  More standard spaces 
are the weighted $\ell_2$ spaces (see e.g. \cite{kunis2007stability, nsw02}) 
 \begin{equation}
  \label{ssdad}
  S_{\alpha} := \left\{ f = \sum_{j \in \Z^d} x_j \phi_j, \quad \| f \|_{\alpha}^2 := \sum_{j \in \Z^d} \alpha_j | x_j |^2  < \infty \right\}.
  \end{equation}
  where $(\phi_j)$ is the tensorized Fourier basis on the torus $\T^d$.  Note that here we refer to weighted $\ell_2$ spaces, as opposed to the weighted norm introduced in \eqref{weighted_norm} which reduces to the unweighted $\ell_2$ norm when $p=2$.
  
  For the choice of weights $\alpha_j = (1 + \| j \|_2^2)^{r}$, $j \in \Z^d$, on these spaces coincide with the Sobolev spaces $W^{r,2}(\T^d)$ 
  of functions with $r$ derivatives in $L_2(\T^d)$. 
Optimal interpolation rates for these Sobolev spaces are obtained using smooth and localized kernels (as opposed to polynomials).  For example, from equispaced points on the $d$-dimensional torus with mesh size $h > 0$, \cite{nsw02} derives error estimates of the form
 \[
 \label{sobolev_bound}
 \| f - f^{\#} \|_{\infty} = \mathcal{O}(h^{r - d/2}) \| f \|_{\alpha}.
 \]
Writing out this error rate in terms of the number of samples $m = (1/h)^d$, one obtains
$$
\| f - f^{\#} \|_{\infty} \leq \mathcal{O}(m^{1/2-r/d})\| f \|_{\alpha}.
$$
The dependence of the exponent $1/2-r/d$ in $d$ means that we face the curse of dimension.

Such behavior in $d$ may be alleviated for linear interpolation by passing to Sobolev spaces of mixed smoothness, which are endowed with the
norm
\[
 \| f \|_{r,\operatorname{mix}}^2 := \sum_{j \in \Lambda} | x_j |^2 \prod_{\ell=1}^d (1+|j_\ell|^2)^{r}, \quad \mbox{ where } f = \sum_{j \in \Z^d} x_j \phi_j.
\]
Sampling on a sparse grid with $m$ sampling points and reconstructing via Smolyak's algorithm leads to the error bound
\[
\| f - f^{\#} \|_{\infty} = \mathcal{O}(m^{1/2-r} \log(m)^{(d-1)r}) \|f \|_{r,\operatorname{mix}}, 
\]
see \cite{te93-1} and \cite[Theorem 6.11]{bydusiul14}. A matching lower bound has recently been proved in \cite{cokusi14}. This means that the dependence in $d$ can be avoided at the term $m^{1/2-r}$.
However, the additional logarithmic term still exhibits exponential scaling in $d$.

In contrast, Theorem \ref{thm:intro2} implies that weighted $\ell_1$ minimization gives the rate 
$$
\| f - f^{\#} \|_{\infty} \leq \mathcal{O}\left( \frac{m}{\log^3(m)\log(N)} \right)^{1-1/p} \triple f \triple_{\omega,p} \mbox{ where } 0 < p < 1.
$$
Here, the dependence in $d$ is hidden in $N = \#\Lambda_0 = \# \{ j \in \Z^d : \omega_j^2 \leq s/2\}$ 
which in turn depends on the weight $\omega$. We discuss two reasonable choices which correspond
to the ones leading to Sobolev and mixed Sobolev spaces in the $\ell_2$-case described above. Let first
\[
\omega_j = (1+\|j\|_2)^r, \quad j \in \Z^d.
\]
Then 
\[
N = \# \{ j \in \Z^d : (1+\|j\|_2)^{2r} \leq s/2 \}  \quad \mbox{ so that } \log(N) \leq C \frac{d}{r} \log(s).
\]
This means that the approximation error decays like
$$
\| f - f^{\#} \|_{\infty} \leq C \left( \frac{r\,m}{d \log^4(m)} \right)^{1-1/p} \triple f \triple_{\omega,p}.
$$
Hence, the approximation error depends only polynomially on the dimension $d$, and we avoided the curse of dimension by
working with the stronger norm  $\triple f \triple_{\omega,p}$ and using nonlinear reconstruction.

We can further reduce the dependence in $d$ by using the weights
\[
\omega_j = \prod_{\ell=1}^d (1+|j_\ell|)^{r}, \quad j \in \Z_d.
\] 
In fact, the set $\Lambda_0 = \{ j \in \Z^d : \omega_j^2 \leq s/2\} = \{ j \in \Z^d : \prod_{\ell=1}^d (1+|j_\ell|) \leq (s/2)^{1/(2r)} \}$ 
is a hyperbolic cross. Its size can be bounded as 
\[
N = \# \Gamma_0^s \leq e^2 \left((s/2)^{1/(2r)}\right)^{2+\log_2(d)}
\]
see \cite[Proof of Theorem 4.9]{KSU13}. Therefore, $\log(N) \leq C r^{-1} \log(d) \log(s)$ and the error bound resulting from Theorem~\ref{thm:intro2} is
\[
\| f - f^{\#} \|_{\infty} \leq C \left( \frac{r\, m}{\log(d) \log^4(m)} \right)^{1-1/p}.
\]
The dependence in the dimension $d$ is only logarithmic for this choice of weight function. Hence, passing from (mixed) Sobolev spaces
to weighted $\ell_p$-spaces with $p < 1$ on the Fourier coefficients, and from linear interpolation to nonlinear reconstruction, 
may lead to a significant improvement in the error rates and may avoid the curse of dimension.

\section{Examples}

In this section we consider several examples and demonstrate how Theorem \ref{thm:infRIPprob} gives rise to various sampling theorems for polynomial and spherical harmonic interpolation.  One could derive similar results in weighted $\ell_p$ spaces using Theorem \ref{thm:intro2}. 

{\subsection{Spherical harmonic interpolation}\label{sec:sphere}}

\begin{figure}
\quad \quad \quad \quad {\includegraphics[width=2.8cm, height=3cm]{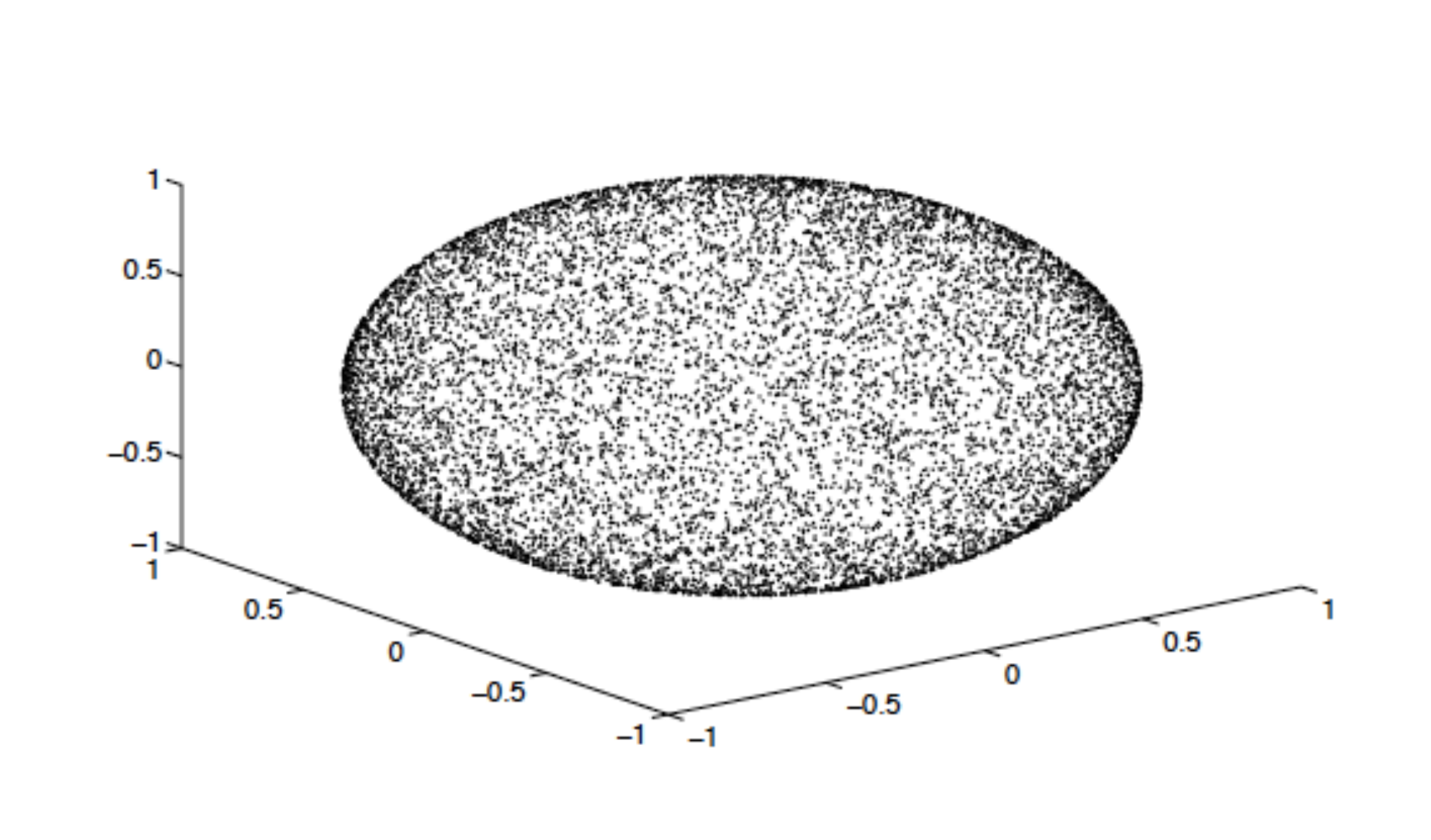}} \quad  \quad \quad 
{\includegraphics[width=2.8cm, height=3cm]{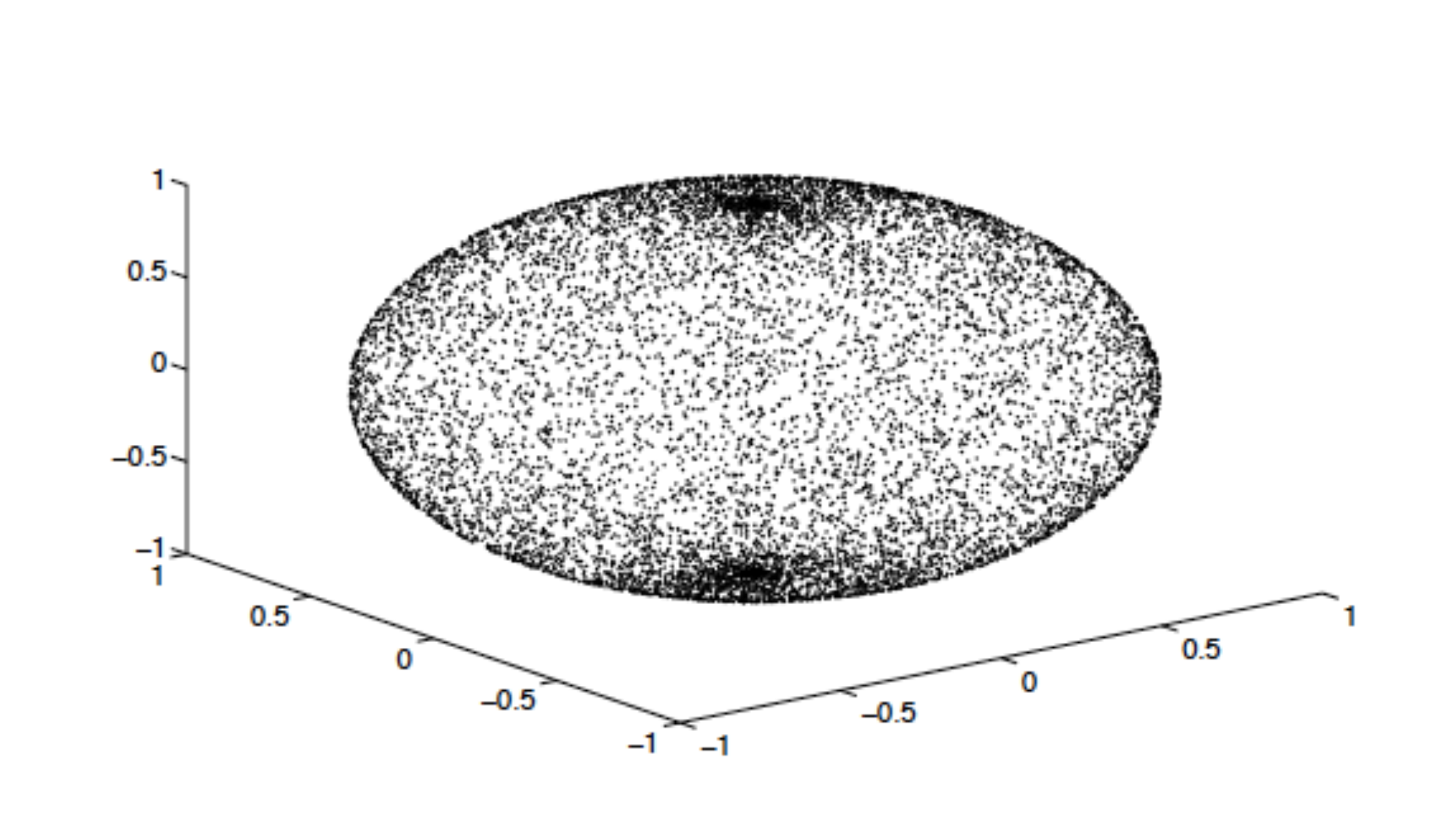}} \quad  \quad \quad 
{\includegraphics[width=2.8cm, height=3cm]{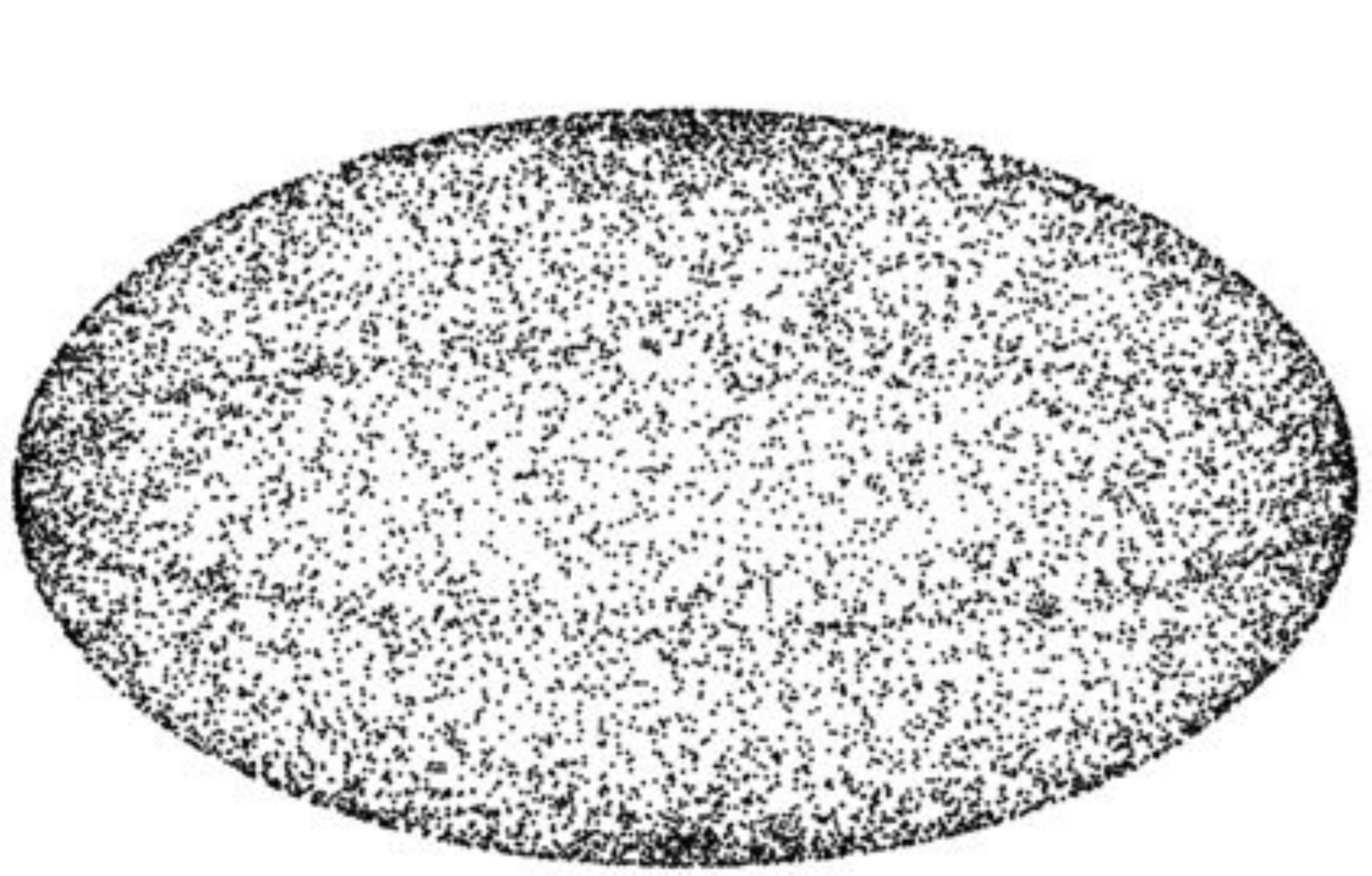}} \\
\hbox to\hsize{\hskip.6in (a) $\sin(\theta)\,d\theta  d\varphi$  \quad \quad \quad  \quad \quad(b) $d\theta 
d\varphi$ \quad \quad \quad \quad \quad(c) $| \tan(\theta) |^{1/3}\, d\theta  d\varphi$  \hskip1in}
\caption{An illustration of i.i.d.\ samples from various spherical measures.  $(\pi, \varphi) \in [0,\pi) \times [0,2\pi]$.  The distribution (c) is the most incoherent with respect to the spherical harmonic basis.
}\label{sphere_sample}
\end{figure}

The spherical harmonics $Y_{\ell}^k$ form an orthonormal system for square-integrable functions on the sphere {$S^2 = \{\x \in \R^3: \|\x\|_2 = 1\}$}, 
and serve as a higher-dimensional analog of the univariate trigonometric basis.  
They are orthogonal with respect to the uniform spherical measure. {In spherical coordinates $(\varphi,\theta) \in [0,2 \pi) \times [0, \pi)$, $(x = \cos(\varphi)\sin(\theta),y = \sin(\varphi)\sin(\theta),z = \cos(\theta))$ for $(x,y,z) \in S^2$, the orthogonality reads}
\begin{equation}
\label{sphere:norm}
\int_{0}^{2\pi} \int_{0}^{\pi} Y_{\ell}^k (\varphi, \theta) \bar{Y}_{\ell'}^{k'}(\varphi, \theta) \sin(\theta) d\theta d\varphi = \delta_{\ell \ell', k k'}, \quad k, \ell \in \mathbb{Z}, \quad | k | \leq \ell.
\end{equation}
The spherical harmonics are bounded according to $\| Y_{\ell}^k \|_{L_\infty} \leq \ell^{1/2}$, and this bound is realized at the poles of the sphere, $\theta = 0, \pi$.  As shown in \cite{rw11, bdwz}, one can precondition the spherical harmonics to transform them into a system with smaller uniform bound, orthonormal with respect to a different measure.
For example, the preconditioned function system
$$
Z_{\ell}^k(\varphi, \theta) = \sin(\theta)^{1/2} Y_{\ell}^k (\varphi, \theta),
$$
normalized by the proper constant, is orthonormal on the sphere with respect to the measure $d\mu = d\theta d\varphi$ 
by virtue of \eqref{sphere:norm}.  The $Z_{\ell}^k$ are more uniformly bounded than the spherical harmonics $Y_{\ell}^k$; as noted in \cite{krasikov},
$$
\|  Z_{\ell}^k \|_{L_\infty} \leq C \ell^{1/4}
$$
for a universal constant $C$.  A sharper preconditioning estimate was shown in \cite{bdwz} for the system 
\begin{equation}
\label{sphere:sharper}
\tilde{Z}_{\ell}^k(\varphi, \theta) := (\sin^2(\theta) \cos(\theta))^{1/6} Y_{\ell}^k (\varphi, \theta).
\end{equation}
Normalized properly, this system is orthonormal on the sphere with respect to the measure $d \nu = |\tan(\theta)|^{1/3} d\theta d\varphi$, which is nonstandard and illustrated in Figure \ref{sphere_sample}. 
This system obeys the uniform bound
\begin{equation}
\label{weight:est}
\| \tilde{Z}_{\ell}^k \|_{\infty} \leq C \ell^{1/6},
\end{equation}
with $C$ a universal constant.  

We consider implications of Theorem \ref{thm:infRIPprob} for interpolation with spherical harmonic expansions.  We state a result in the setting where sampling points are drawn from the measure $ |\tan(\theta)|^{1/3} d\theta d\varphi$, but similar results (albeit with steeper weights) can be obtained for sampling from the measures $d\theta d\varphi$ and $\sin(\theta)\,d\theta  d\varphi$.

\begin{corollary}[Interpolation with spherical harmonics]
\label{thm:spherical}
Consider the  preconditioned spherical harmonics $\tilde{Z}_\ell^k,  | k | \leq \ell$, and associated orthogonalization measure $d\nu= |\tan(\theta)|^{1/3} d\theta d\varphi$.  
Fix weights $\omega_{\ell,k} = C\ell^{1/6}$ and index set $\Lambda_0 = \{(\ell,k) :  |k| \leq \ell \leq s^{3} \}$ of size $N = s^6$, and fix a number of samples  
$$m \geq c_0 s \log^4(s).$$ 
Consider a fixed function $f(\varphi, \theta) = \sum_{\ell,k} x_{\ell,k} \tilde{Z}_\ell^k(\varphi, \theta)  \in S_{\omega,1}$ {and} let $\eta = \triple f - f_{\Lambda_0} \triple_{\omega,1}$.  Draw $(\varphi_j, \theta_j), j=1, \dots, m,$ i.i.d.\ from $d\nu$, and consider sample values $y_j = f(\varphi_j, \theta_j)$, $j=1,\hdots,m.$  Then with probability exceeding $1-N^{-\log^3(s)}$, the function  $f^\sharp = \sum_{(\ell,k) \in \Lambda_0} x_{\ell,k}^\sharp \tilde{Z}_\ell^k$ formed from the solution $x^\sharp$ of the weighted $\ell_1$ minimization program
\[
\min_{u_{\ell,k}} \sum_{\ell,k \in \Lambda_0} \omega_{\ell,k}| u_{\ell,k} |  \mbox{ subject to }  \sum_{j=1}^m \Big( \sum_{\ell,k \in \Lambda_0} u_{\ell,k} \tilde{Z}_\ell^k (\varphi_j, \theta_j) - y_j \Big)^2 \leq \frac{m \eta^2}{s} 
\]
satisfies the error bounds
\begin{align}
\|f-f^\sharp\|_{\infty} \leq \triple f - f^\sharp \triple_{\omega,1} & \leq c_1 \sigma_s(f)_{\omega,1},\notag\\
\|f-f^\sharp\|_{L_2} & \leq c_2\sigma_s(f)_{\omega,1} / \sqrt{s}.\notag
\end{align}
\end{corollary}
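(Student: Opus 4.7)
The plan is to obtain Corollary 2.1 as essentially a direct specialization of Theorem \ref{thm:infRIPprob} to the preconditioned spherical harmonic system $\tilde{Z}_\ell^k$, with the orthogonalization measure $d\nu = |\tan(\theta)|^{1/3} d\theta d\varphi$ identified in \eqref{sphere:sharper}. The bulk of the work is just checking that the hypotheses of Theorem \ref{thm:infRIPprob} hold with the stated choices of weights, index set, and sample count.

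First, I verify that the weights dominate the $L_\infty$ norms of the basis functions: the sharp preconditioning estimate \eqref{weight:est} gives $\| \tilde{Z}_\ell^k \|_\infty \leq C \ell^{1/6}$, so the choice $\omega_{\ell,k} = C \ell^{1/6}$ satisfies $\omega_{\ell,k} \geq \|\tilde{Z}_\ell^k\|_\infty$, as required by Theorem \ref{thm:infRIPprob}. Next, I identify the set $\Lambda_0$. Theorem \ref{thm:infRIPprob} sets $\Lambda_0 = \{ j : \omega_j^2 \leq s/2\}$; with $\omega_{\ell,k}^2 = C^2 \ell^{1/3}$ the constraint $\omega_{\ell,k}^2 \leq s/2$ becomes $\ell \leq (s/(2C^2))^3$, which, after absorbing the constant into $s$ (or into $c_0$ at the end), can be written as $\ell \leq s^3$. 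The corresponding index set $\{(\ell,k) : |k| \leq \ell \leq s^3\}$ coincides with the $\Lambda_0$ in the statement, and its cardinality is $\sum_{\ell=0}^{s^3}(2\ell+1) = (s^3+1)^2$, so $N \leq 4 s^6$.

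With this bound on $N$, I check the sample complexity: the requirement $m \geq c_0 s \log^3(s) \log(N)$ from Theorem \ref{thm:infRIPprob} becomes $m \geq c_0' s \log^3(s) \cdot \log(s)$ since $\log(N) \leq 6 \log s + \log 4$, which after adjusting the universal constant is exactly $m \geq c_0 s \log^4(s)$, matching the hypothesis of the corollary. The failure probability $N^{-\log^3(s)}$ transfers unchanged. I would also note that choosing $\eta = \triple f - f_{\Lambda_0}\triple_{\omega,1}$ corresponds to the distortion parameter $\varepsilon = 0$, so the constants $c_1, d_1$ in Theorem \ref{thm:infRIPprob} become universal constants, renamed $c_1, c_2$ in the corollary.

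Finally, I would observe that the optimization program in Theorem \ref{thm:infRIPprob},
\[
\min \|\z\|_{\omega,1} \mbox{ subject to } \|\A \z - \y\|_2 \leq (m/s)^{1/2} \eta,
\]
when written out for the sampling matrix $A_{j,(\ell,k)} = \tilde{Z}_\ell^k(\varphi_j, \theta_j)$ restricted to $(\ell,k) \in \Lambda_0$, is precisely the program displayed in the corollary (after squaring the constraint). The conclusions $\|f - f^\sharp\|_\infty \leq \triple f - f^\sharp\triple_{\omega,1} \leq c_1 \sigma_s(f)_{\omega,1}$ and $\|f - f^\sharp\|_{L_2} \leq c_2 \sigma_s(f)_{\omega,1}/\sqrt{s}$ then follow immediately from Theorem \ref{thm:infRIPprob}, where the first $L_\infty$ bound uses the by-now standard inequality $\|g\|_\infty \leq \triple g \triple_{\omega,1}$ valid whenever $\omega_j \geq \|\psi_j\|_\infty$. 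The only real obstacle in the argument is bookkeeping: one must be careful that the universal constant $C$ appearing in the weights $\omega_{\ell,k} = C\ell^{1/6}$ is absorbed consistently into the definition of $\Lambda_0$ and into $c_0$, but no new ideas beyond Theorem \ref{thm:infRIPprob} and the preconditioning bound \eqref{weight:est} are needed.
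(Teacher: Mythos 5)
Your proposal is correct and takes essentially the same route as the paper, which states Corollary \ref{thm:spherical} as a direct specialization of Theorem \ref{thm:infRIPprob} without writing out the verification. The bookkeeping you supply --- $\omega_{\ell,k}\geq\|\tilde{Z}_\ell^k\|_\infty$ via \eqref{weight:est}, the identification $\{\omega_{\ell,k}^2\leq s/2\}=\{\ell\lesssim s^3\}$ with $N\asymp s^6$ so that $\log N\asymp\log s$, the $\varepsilon=0$ choice of $\eta$, and the squared form of the constraint --- is exactly what the paper leaves implicit.
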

It is informative to compare these results with previously available bounds for unweighted $\ell_1$ minimization.
Using the same sampling distribution $d \nu = |\tan(\theta)|^{1/3} d\theta d\varphi$ and number of basis elements $N = s^6$, existing bounds for unweighted $\ell_1$ minimization (see \cite{bdwz}) require a number of samples
$$
m \geq c  N^{1/6} s \log^4(s) {= c}  s^2 \log^4(s) 
$$
to achieve an error estimate of the form $\|f-f^\sharp\|_{L_2} \leq C \sigma_s(f)_{1}/\sqrt{s}$ (see \cite{rw11} for more details).  
That is, significantly more measurements $m$ are required to achieve the 
same reconstruction rate in $L_2$. (A rate for $L_\infty$ is not available for unweighted $\ell_1$-minimization).  However, stronger assumptions on $f$ are required 
in the sense that the result above requires the \emph{weighted} best $s$-term approximation error to be small while the bound from \cite{rw11} works
with the unweighted best $s$-term approximation error. Expressed differently, our result requires more smoothness which is in line with the general
philosophy of this paper.

 \subsection{Tensorized polynomial interpolation}
 
 The tensorized trigonometric polynomials on ${\cal D} = \T^d$ are given by
$$
\psi_{{\bf k}}({\bf t}) = \psi_{k_1}(t_1)\psi_{k_2}(t_2)\dots \psi_{k_d}(t_d), \quad {\bf k} \in \Z^d, 
$$
with $\psi_j(t) = e^{2\pi i j t}$.  These functions are orthonormal with respect to the tensorized uniform measure.
Because this system is uniformly bounded, Theorem \ref{thm:infRIPprob} applies with constant weights $\omega_j \equiv 1$.   Nevertheless, higher weights promote smoother reconstructions.

Other tensorized polynomial bases of interests are not uniformly bounded, but we can get reconstruction guarantees by considering weighted $\ell_1$ minimization with properly chosen weights.

\subsubsection{Chebyshev polynomials}

Consider the tensorized Chebyshev polynomials on ${\cal D} = [-1,1]^d$:  
\begin{equation}
\label{chebyshevT}
C_{{\bf k}}({\bf t}) = C_{k_1}(t_1)C_{k_2}(t_2) \dots C_{k_d}(t_d), \quad {\bf k} \in \N^d,
\end{equation}
where $C_{k}(t) = \sqrt{2} \cos\big( (k-1) \arccos(t) \big)$.
The Chebyshev polynomials form a basis for the real algebraic polynomials on ${\cal D}$, and are orthonormal with respect to the tensorized Chebyshev measure
\begin{equation}
\label{chebyshev}
d{\bf \mu} = \frac{d{\bf t}}{(2\pi)^d \Pi_{j=1}^d (1-t_j^2)^{1/2} }.
\end{equation}
Since $\| C_k \|_{\infty} = 2^{1/2}$ we have $\| C_{\bf k} \|_{\infty} = 2^\frac{ \| {\bf k} \|_0}{2}$.  Although the tensorized Chebyshev polynomials are uniformly bounded,
the bound may therefore scale exponentially in $d$. This motivates us to apply Theorem \ref{thm:infRIPprob} with weights 
\begin{equation}
\label{HyperCrossWeight}
\omega_{{\bf k}} 
{=} \prod_{\ell=1}^d (k_{\ell}+1)^{1/2},
\end{equation}
noting that $\| C_{\bf k} \|_{\infty} \leq \omega_{{\bf k}}$. {(More generally, one could also work with weights of the form 
$\omega_{\bf k} =  2^\frac{ \| {\bf k} \|_0}{2} v_{\bf k}$, where $v_{\bf k}$ tends to infinity as $\| \bf k \|_2 \to \infty$.)}
Such weights encourage both sparse and low order tensor products of Chebyshev polynomials.  The subset of indices
$$
H_s^d = \{ k \in {\N_0^d}: \omega_{{\bf k}}^2 \leq s \} = \left\{ k \in {\N_0^d}: \prod_{\ell=1}^d (k_{\ell}+1) \leq s \right\}
$$
forms a hyperbolic cross.  As argued in \cite[Proof of Theorem 4.9]{KSU13}, see also \cite{cdfr11},
the size of a hyperbolic cross can be bounded according to
$$
| H_s^d | \leq e^2 s^{2+\log_2(d)}.
$$
  
\begin{corollary}\label{thm:cheb}
Consider the tensorized Chebyshev polynomial basis $(C_{{\bf k}})$ for $[-1,1]^d$, and weights $\omega_{\bf k}$ as in \eqref{HyperCrossWeight}.  
Let {$\Lambda_0 = \{ {\bf k} \in \N_0^d : \omega^2_{\bf k} \leq s/2 \}$}, and let 
$N = | \Lambda_0 | \leq e^2 (s/2)^{2 + \log_2(d)}$.
Fix a number of samples 
\begin{align}\label{m:cheb:tensor}
{m \geq c_0 s \log^3(s) \log(N).} 
\end{align}
Consider a function $f  = \sum_{{\bf k} \in \Lambda} x_{\bf k} C_{\bf k},$ and sampling points ${\bf t}_\ell$, $\ell=1,\hdots,m$ drawn i.i.d. from the tensorized Chebyshev measure on $[-1,1]^d$.    Let  $\A \in \C^{m \times N}$ be the sampling matrix with entries $A_{\ell,j} = \psi_j(t_\ell)$.  From samples $y_\ell = f({\bf t}_\ell)$, $\ell=1,\hdots,m$, let $\x^\sharp$ be the solution of
\[
\min \|\z\|_{\omega,1} \mbox{ subject to }  \| \A \z - \y \|_2 \leq \sqrt{m/s} \triple f - f_{\Lambda_0} \triple_{\omega,1} 
\]
and set $f^\sharp({\bf t}) = \sum_{{\bf k} \in \Lambda_0} {x_{\bf k}} ^\sharp C_{\bf k}({\bf t})$. Then with probability exceeding $1 - N^{-\log^3(s)}$, 
\begin{align}
\| f - f^\sharp \|_{L_{\infty}} \leq \triple f - f^\sharp \triple_{\omega,1} & \leq c_1 \sigma_s(f)_{\omega,1}, \notag\\
\|f-f^\sharp\|_{L^2}  & \leq d_1\sigma_s(f)_{\omega,1} / \sqrt{s}.\notag
\end{align}
\noindent Above, $c_0, c_1,$ and $d_1$ are {universal constants.} 
\end{corollary}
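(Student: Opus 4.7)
The plan is to obtain Corollary~\ref{thm:cheb} as a direct specialization of Theorem~\ref{thm:infRIPprob} to the tensorized Chebyshev system, so the work reduces to verifying the hypotheses of that theorem in the present setting. The three ingredients to check are: (a) orthonormality of $\{C_{\bf k}\}$ with respect to a probability measure, (b) the weight-domination bound $\omega_{\bf k} \geq \|C_{\bf k}\|_\infty$, and (c) the cardinality estimate on $\Lambda_0$ so that the sample count $m \geq c_0 s \log^3(s) \log(N)$ appearing in Theorem~\ref{thm:infRIPprob} matches the statement.

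For (a), orthonormality of the tensorized Chebyshev basis with respect to the tensorized Chebyshev probability measure \eqref{chebyshev} follows by tensorization from the univariate orthogonality of $\{C_k\}_{k \in \N_0}$. For (b), I would use that $\|C_0\|_\infty = 1$ and $\|C_k\|_\infty = \sqrt{2}$ for $k \geq 1$, giving $\|C_{\bf k}\|_\infty = 2^{\|{\bf k}\|_0/2}$, and then observe that
\[
\omega_{\bf k} \;=\; \prod_{\ell=1}^d (k_\ell+1)^{1/2} \;\geq\; \prod_{\ell:\,k_\ell \geq 1} 2^{1/2} \;=\; 2^{\|{\bf k}\|_0/2} \;=\; \|C_{\bf k}\|_\infty,
\]
which is precisely the hypothesis Theorem~\ref{thm:infRIPprob} demands on the weights. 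For (c), note that $\omega_{\bf k}^2 = \prod_\ell (k_\ell+1)$, so $\Lambda_0 = \{{\bf k} \in \N_0^d : \prod_\ell (k_\ell+1) \leq s/2\}$ is a hyperbolic cross of level $s/2$, and the bound $|H^d_T| \leq e^2 T^{2+\log_2 d}$ quoted from \cite[Proof of Theorem 4.9]{KSU13} (with $T = s/2$) yields the claimed $N \leq e^2 (s/2)^{2+\log_2 d}$.

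With these verifications in hand, I would plug the ingredients into Theorem~\ref{thm:infRIPprob} applied to the function system $\{C_{\bf k}\}_{{\bf k} \in \N_0^d}$, the weights \eqref{HyperCrossWeight}, and the tail parameter $\eta = \triple f - f_{\Lambda_0} \triple_{\omega,1}$, to conclude that on the high-probability event the minimizer $\x^\sharp$ produces a function $f^\sharp$ supported on $\Lambda_0$ satisfying the stated weighted $\ell_1$ and $L_2$ error bounds once $m \geq c_0 s \log^3(s) \log(N)$. The condition $s \geq 2\|\w|_{\Lambda_0}\|_\infty^2$ implicit in Theorem~\ref{thm:infRIPprob} holds automatically by construction of $\Lambda_0$. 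There is genuinely no deep obstacle here, since the heavy lifting (weighted RIP of sampled bounded orthonormal systems, weighted null space property, noise-robust recovery) is already packaged inside Theorem~\ref{thm:infRIPprob}; the only point that could fail is the weight-domination step, and it works precisely because the elementary inequality $(k+1)^{1/2} \geq \sqrt{2}$ for $k \geq 1$ recovers the $2^{\|{\bf k}\|_0/2}$ growth of $\|C_{\bf k}\|_\infty$ while keeping $\Lambda_0$ a hyperbolic cross of mild size in $d$.
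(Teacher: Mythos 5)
Your proposal is correct and follows essentially the same route as the paper: Corollary~\ref{thm:cheb} is obtained by specializing Theorem~\ref{thm:infRIPprob} to the tensorized Chebyshev system, and the verifications you supply (orthonormality under the tensorized Chebyshev measure, the domination $\omega_{\bf k} = \prod_\ell (k_\ell+1)^{1/2} \geq 2^{\|{\bf k}\|_0/2} = \|C_{\bf k}\|_\infty$, and the hyperbolic-cross cardinality bound from \cite{KSU13}) are precisely the ingredients the paper records in the text preceding the corollary. The observation that $s \geq 2\|\w\|_\infty^2$ holds automatically on $\Lambda_0 = \{\omega_{\bf k}^2 \leq s/2\}$ is likewise the paper's intended reading.
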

\noindent 
Note that with the stated estimate of $N$, $m$ satisfies \eqref{m:cheb:tensor} once
\[
m \geq c_0 \log(d) s \log^4(s).
\]
This means that the required number of samples $m$ above grows only logarithmically with the ambient dimension $d$ 
as opposed to exponentially, as required for classical interpolation bounds using linear reconstruction methods.

\subsubsection{Legendre polynomials}

Consider now the tensorized Legendre polynomials on ${\cal D} = [-1,1]^d$:  

\begin{equation}
\label{legendreT}
L_{{\bf k}}({\bf t}) = L_{k_1}(t_1)L_{k_2}(t_2) \dots L_{k_d}(t_d), \quad {\bf k} \in \N^d,
\end{equation}
where $L_{k}$ is the univariate orthonormal Legendre polynomial of degree $k$.
The Legendre polynomials form a basis for the real algebraic polynomials on ${\cal D}$, and are orthonormal with respect to the tensorized uniform measure on ${\cal D}$.  Since $\| L_k \|_{\infty} \leq \sqrt{k}$ we have 
\begin{equation}
\label{inf:bound}
\| L_{\bf k} \|_{\infty} \leq \prod_{\ell=1}^d (k_{\ell}+1)^{1/2},
\end{equation}
and we may apply Theorem \ref{thm:infRIPprob} with hyperbolic cross weights $\omega_{{\bf k}} = \prod_{\ell=1}^d (k_{\ell}+1)^{1/2}$ as in Corollary \ref{thm:cheb}.  In doing so, we arrive at the following result.

\begin{corollary}\label{thm:leg}
Consider the tensorized Legendre polynomial basis and weights $\omega_{\bf k}$ as in \eqref{HyperCrossWeight} and with $\Lambda_0$, $N$, and $m$ as in Corollary \ref{thm:cheb}.  Consider a function $f  = \sum_{{\bf k} \in \Lambda} x_{\bf k} L_{\bf k},$ and suppose that ${\bf t}_\ell$, $\ell=1,\hdots,m$, are drawn i.i.d.\ from the tensorized uniform measure on $[-1,1]^{d}$.   Let  $\A \in \C^{m \times N}$ be the associated sampling matrix with entries $A_{\ell,j} = \psi_j(t_\ell)$. From samples $y_\ell = f({\bf t}_\ell)$, $\ell=1,\hdots,m$, let $\x^\sharp$ be the solution of
\[
\min \|\z\|_{\omega,1} \mbox{ subject to }  \| \A \z - \y \|_2 \leq \sqrt{m/s} \triple f - f_{\Lambda_0} \triple_{\omega,1} 
\]
and set $f^\sharp({\bf t}) = \sum_{{\bf k} \in \Lambda_0} {x_{\bf k}^\sharp} L_{\bf k}({\bf t})$. Then with probability exceeding $1 - N^{-\log^3(s)}$, 
\begin{align}
\| f - f^\sharp \|_{L_{\infty}} \leq \triple f - f^\sharp \triple_{\omega,1} & \leq c_1 \sigma_s(f)_{\omega,1}, \notag\\ 
\|f-f^\sharp\|_{L^2}  & \leq d_1\sigma_s(f)_{\omega,1} / \sqrt{s}.
\end{align}
\noindent Above, $c_0, c_1,$ and $d_1$ are universal constants. 
\end{corollary}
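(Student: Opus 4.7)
The plan is to obtain Corollary \ref{thm:leg} as a direct application of Theorem \ref{thm:infRIPprob}, paralleling the Chebyshev argument of Corollary \ref{thm:cheb}. First I would verify the three structural hypotheses of Theorem \ref{thm:infRIPprob}: (i) the tensorized Legendre polynomials $L_{\bf k}$ form an orthonormal system with respect to the tensorized uniform measure on $[-1,1]^d$, which follows by separation of variables from univariate orthonormality; (ii) the hyperbolic cross weights $\omega_{\bf k} = \prod_{\ell=1}^{d}(k_\ell+1)^{1/2}$ satisfy $\omega_{\bf k} \geq 1$ and, crucially, $\omega_{\bf k} \geq \| L_{\bf k}\|_\infty$, which is exactly the content of \eqref{inf:bound}; and (iii) the truncation set $\Lambda_0 = \{{\bf k} \in \N_0^d : \omega_{\bf k}^2 \leq s/2\}$ is the same hyperbolic cross as in the Chebyshev case, so the size bound $N = |\Lambda_0| \leq e^2 (s/2)^{2+\log_2 d}$ from \cite[Proof of Theorem 4.9]{KSU13} carries over verbatim, and hence so does the sample complexity \eqref{m:cheb:tensor}.

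With these three items in place, Theorem \ref{thm:infRIPprob} applied with noise tolerance $\eta = \triple f - f_{\Lambda_0}\triple_{\omega,1}$ and $\varepsilon = 0$ yields, on an event of probability exceeding $1 - N^{-\log^3(s)}$, the two stated estimates
\[
\|f-f^\sharp\|_{L_\infty} \leq \triple f - f^\sharp\triple_{\omega,1} \leq c_1 \sigma_s(f)_{\omega,1}, \qquad \|f-f^\sharp\|_{L_2} \leq d_1 \sigma_s(f)_{\omega,1}/\sqrt{s},
\]
with absolute constants $c_0,c_1,d_1$ inherited from Theorem \ref{thm:infRIPprob}. No compressed sensing machinery beyond that theorem is needed, so in this sense the proof is purely a verification.

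The only step that is genuinely Legendre-specific, and the one that actually has to be checked rather than quoted, is the bound \eqref{inf:bound}. I would derive it from the classical univariate estimate $\|L_k\|_\infty \leq (k+1)^{1/2}$ for the orthonormal Legendre polynomials (which is attained at the endpoints $\pm 1$) together with multiplicativity of the sup norm on tensor products. This univariate bound is what forces the exponent $1/2$ per coordinate in the weight $\omega_{\bf k}$ and thus determines the shape of the hyperbolic cross; any worse univariate estimate would enlarge $\Lambda_0$ and degrade $N$. Apart from this observation, there is no real obstacle: Theorem \ref{thm:infRIPprob} was formulated precisely to absorb this class of examples, and the Legendre corollary is simply the Chebyshev corollary with \eqref{inf:bound} replacing $\|C_{\bf k}\|_\infty \leq 2^{\|{\bf k}\|_0/2}$ as the justification for the weight choice.
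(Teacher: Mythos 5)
Your proposal coincides with the paper's (essentially one-line) derivation: Corollary \ref{thm:leg} is obtained exactly as Corollary \ref{thm:cheb}, by checking orthonormality with respect to the tensorized uniform measure, verifying the sup-norm domination \eqref{inf:bound}, reusing the hyperbolic-cross count for $N$, and then invoking Theorem \ref{thm:infRIPprob} with $\eta = \triple f - f_{\Lambda_0}\triple_{\omega,1}$ and $\varepsilon=0$. One caveat you share with the paper: the degree-$k$ orthonormal Legendre polynomial satisfies $\|L_k\|_\infty = \sqrt{2k+1}$, not $\leq (k+1)^{1/2}$, so the weight $\omega_{\bf k}=\prod_\ell (k_\ell+1)^{1/2}$ only dominates $\|L_{\bf k}\|_\infty$ up to a factor $2^{\|{\bf k}\|_0/2}$; this is repaired by inflating the weights by $\sqrt{2}$ per coordinate and changes only the constants.
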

Although the univariate orthonormal Legendre polynomials are not uniformly bounded on $[-1,1]$, they can be transformed into a bounded orthonormal system by considering the weight 
$$
v(t) = (\pi/2)^{1/2} (1 - t^2)^{1/4}, \quad t \in [-1,1],
$$
and recalling Theorem 7.3.3 from \cite{szego} which states that, for all $j \geq 1$,
\begin{equation}
\label{leg:growth}
\sup_{t \in [-1,1]} v(t)| L_j(t) | \leq \sqrt{2+1/j} \leq \sqrt{3}.
\end{equation}
Then the preconditioned system $Q_j(t) = v(t) L_j(t)$ is orthonormal with respect to the Chebyshev measure, and is uniformly bounded on $[-1,1]$ with constant $K = \sqrt{3}$.  A statement similar to Corollary \ref{thm:cheb} can also be applied to tensorized preconditioned Legendre polynomials, if sampling points are chosen from the tensorized Chebyshev measure.   For further details, we refer the reader to \cite{rw10}.

{\subsection{Numerical illustrations}\label{numerics}}

Although the interplay between sparsity and smoothness in function approximation becomes more pronounced in high-dimensional problems, we still observe benefits of weighted $\ell_1$ minimization in univariate polynomial interpolation problems.  In this section we provide an illustration of this effect.

Polynomial interpolation usually refers to fitting the unique trigonometric or algebraic polynomial of degree $m-1$ through a given set of data of size $m$.  When $m$ is large, this problem becomes ill-conditioned, as illustrated for example by Runge's phenomenon, or the tendency of high-degree polynomial interpolants to oscillate at the edges of an interval (the analogous phenomenon for trigonometric polynomial interpolation is Gibb's phenomenon \cite{folland}).  One method for minimizing the effect of Runge's phenomenon is to  carefully choose interpolation nodes -- e.g., Chebyshev nodes for algebraic polynomial interpolation or equispaced nodes for trigonometric interpolation.  Other methods known to reduce the effects of Runge's phenomenon are the method of least squares, where one foregoes exact interpolation for a least squares projection of the data onto a polynomial of lower degree and which has been shown to be stable with respect to random sampling \cite{cdd11}, or by doing weighted $\ell_2$ regularization  \cite{kunis2007stability}, e.g.\ use for interpolation the function $f^{\sharp}(t) = \sum_{j \in \Lambda} x_j^{\sharp} \psi_j(t)$, where the coefficient vector $\x^{\sharp}$ solves the minimization problem
\begin{equation}
\label{eq:wl2}
\min \sum_{j \in \Lambda} \alpha_j^2 z_j^2 \mbox{ subject to } \A \z = \y
\end{equation}
with $\A$ being the sampling matrix as in \eqref{sample_mat}.

In this section we provide evidence that weighted $\ell_1$ minimization can be significantly less sensitive to perturbations in the choice of the sampling points than these other methods, specifically unweighted $\ell_1$ minimization, least squares projections, weighted $\ell_2$ regularization, and exact polynomial interpolation.

For our numerical experiments, we follow the examples in \cite{cdd11} and consider on $[-1,1]$ the smooth function 
$$
f(t) = \frac{1}{1+ 25t^2},
$$
which was originally considered by Runge \cite{runge} to illustrate the instability of polynomial interpolation at equispaced points. We repeat the following experiment 100 times: draw $m = 30$ sampling points $x_1, x_2, \dots, x_m,$ i.i.d. from a measure $\mu$ on ${\cal D} = [-1,1]$ and compute the noise-free observations $y_k = f(x_k)$.  We will use the uniform measure for real trigonometric polynomial interpolation and the Chebyshev measure  for Legendre polynomial interpolation. We then compare  the least squares approximation, unweighted $\ell_1$ approximation, weighted $\ell_2$ approximation with weights $\alpha_j = j$ in \eqref{eq:wl2}, and weighted $\ell_1$ approximations with weights $\omega_j = j$ and weights $\omega_j = j^{1/2}.$  We also compare to exact inversion, where we fit the sampling points with a polynomial whose maximal degree -1 (degrees of freedom) matches the number of sampling points. 
In Figures~\ref{fig:1}-\ref{fig:2} we display the interpolations resulting from all 100 experiments overlaid so as to illustrate the variance of each interpolation method with respect to {the} choice of sampling points. In all experiments, we fix in the $\ell_1$ and $\ell_2$ minimization a maximal polynomial degree $N=100$.  For the least squares solution to be stable \cite{cdd11}, 
{we project} onto the span of the first $d = 15$ basis elements.

  In this example, we observe that the weighted $\ell_1$ solutions are more consistently accurate than the other methods, including exact polynomial interpolation.  There is mild sparsity present in this example; Runge's function is an even function and so all odd coefficients in its Legendre / trigonometric expansion are zero.  Indeed, the weighted $\ell_1$ solutions tend to pick up this sparsity and have zero odd coefficients, unlike the other interpolation methods which do not pick up on this sparsity. 
  
We emphasize once more that this is just an illustration.  For univariate polynomial interpolation in the setting where one can design carefully-chosen sampling points, e.g., Chebyshev nodes, weighted $\ell_1$ minimization should not outperform exact polynomial interpolation.

\begin{figure}[H]
\begin{center}

\subfigure[Original function]{
 \includegraphics[height=2.5cm,width=4cm]{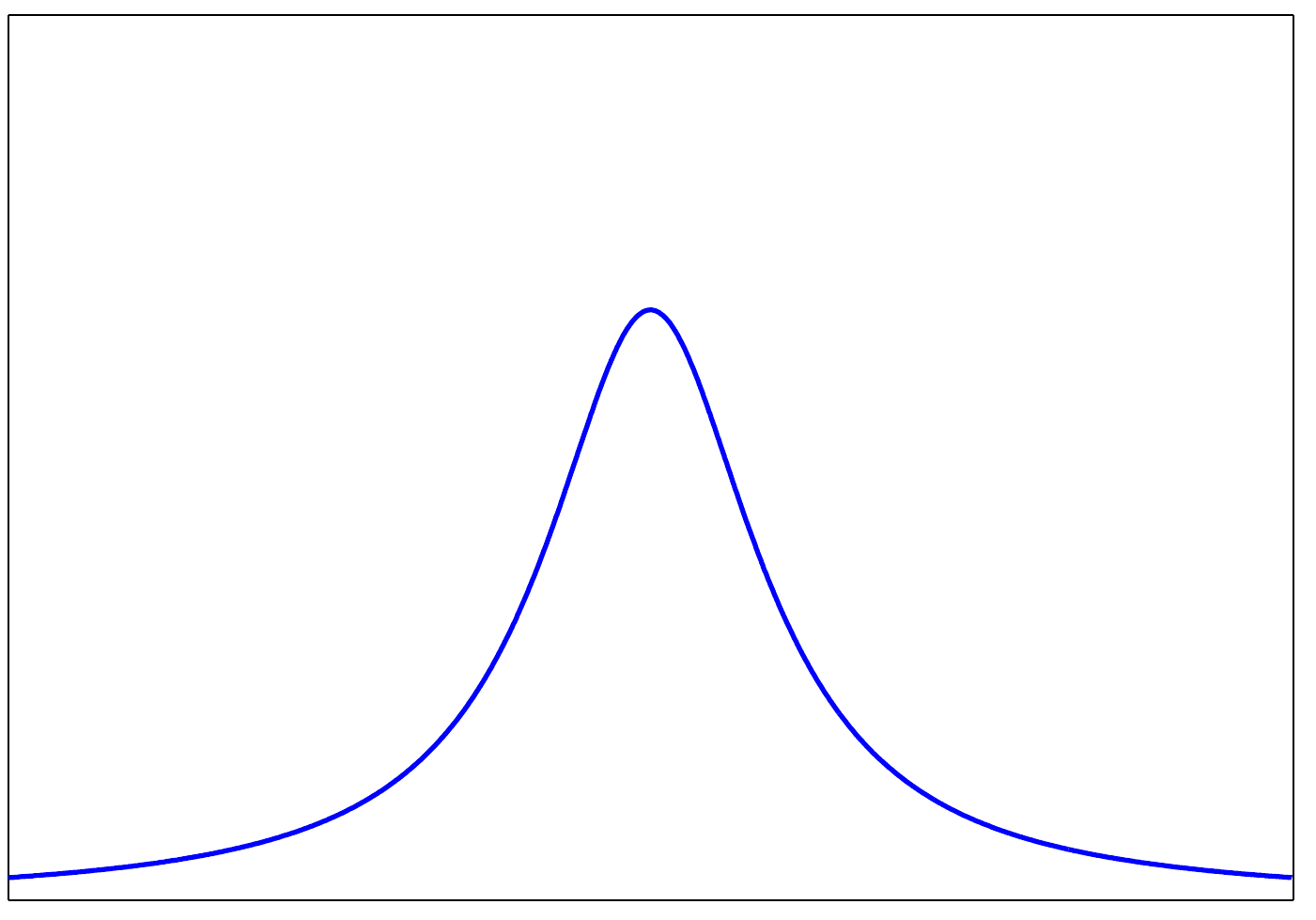}
}

\subfigure[Least squares]{
 \includegraphics[height=2.5cm,width=4cm]{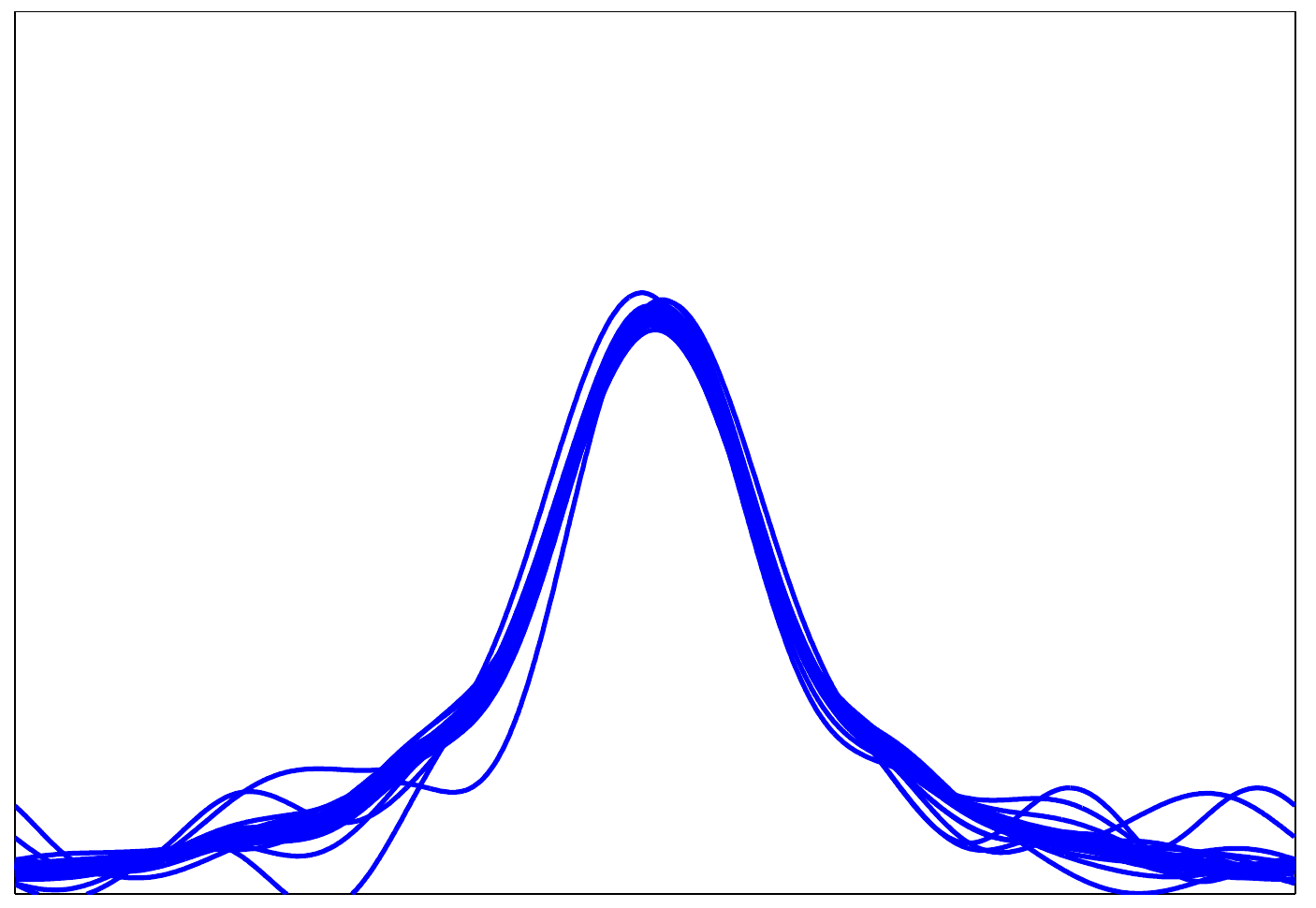} 
 }
 \subfigure[Weighted $\ell_2$, $\omega_j = j^{1/2}$]{
 \includegraphics[height=2.5cm,width=4cm]{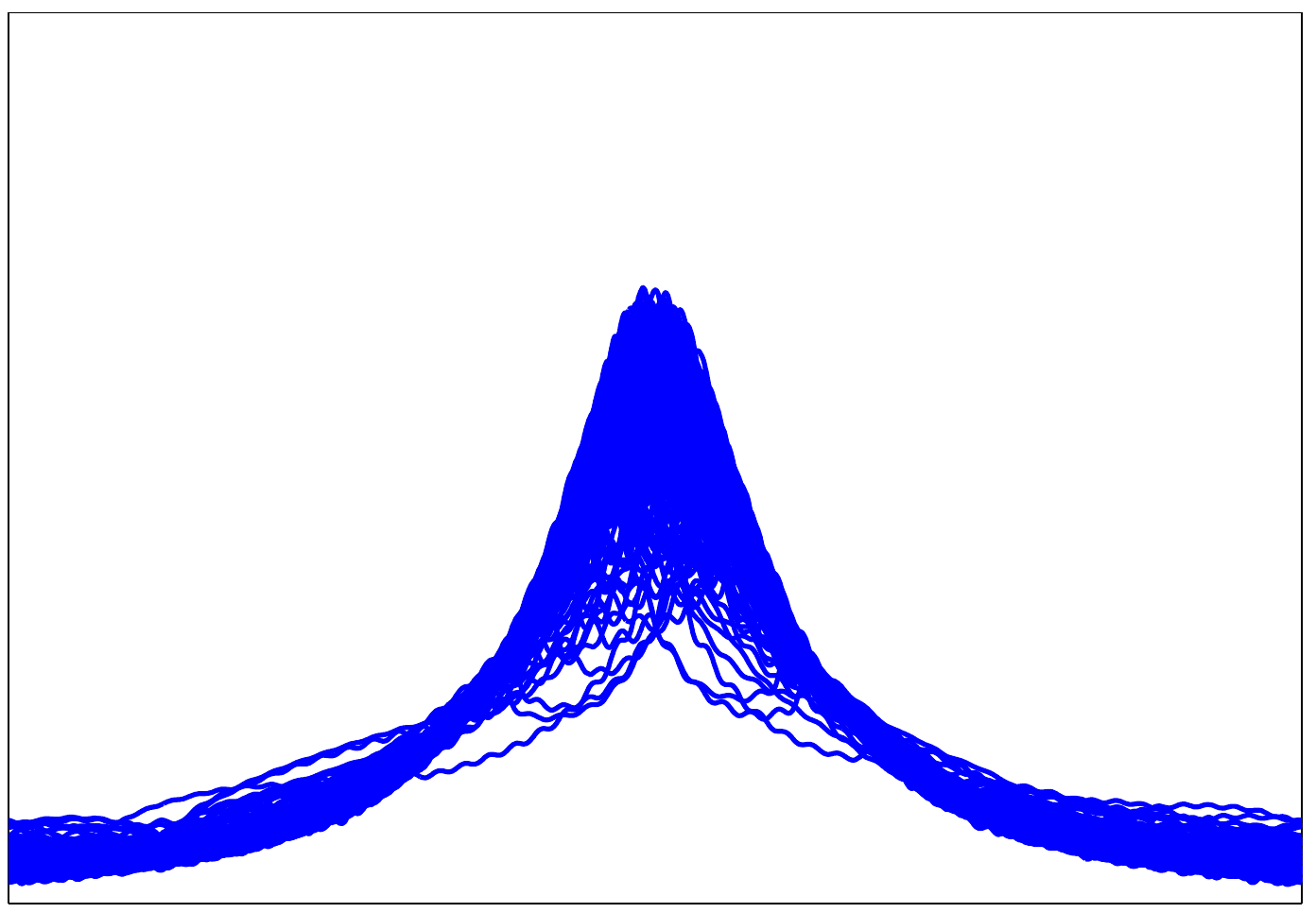} 
 }
 \subfigure[Exact inversion]{
  \includegraphics[height=2.5cm,width=4cm]{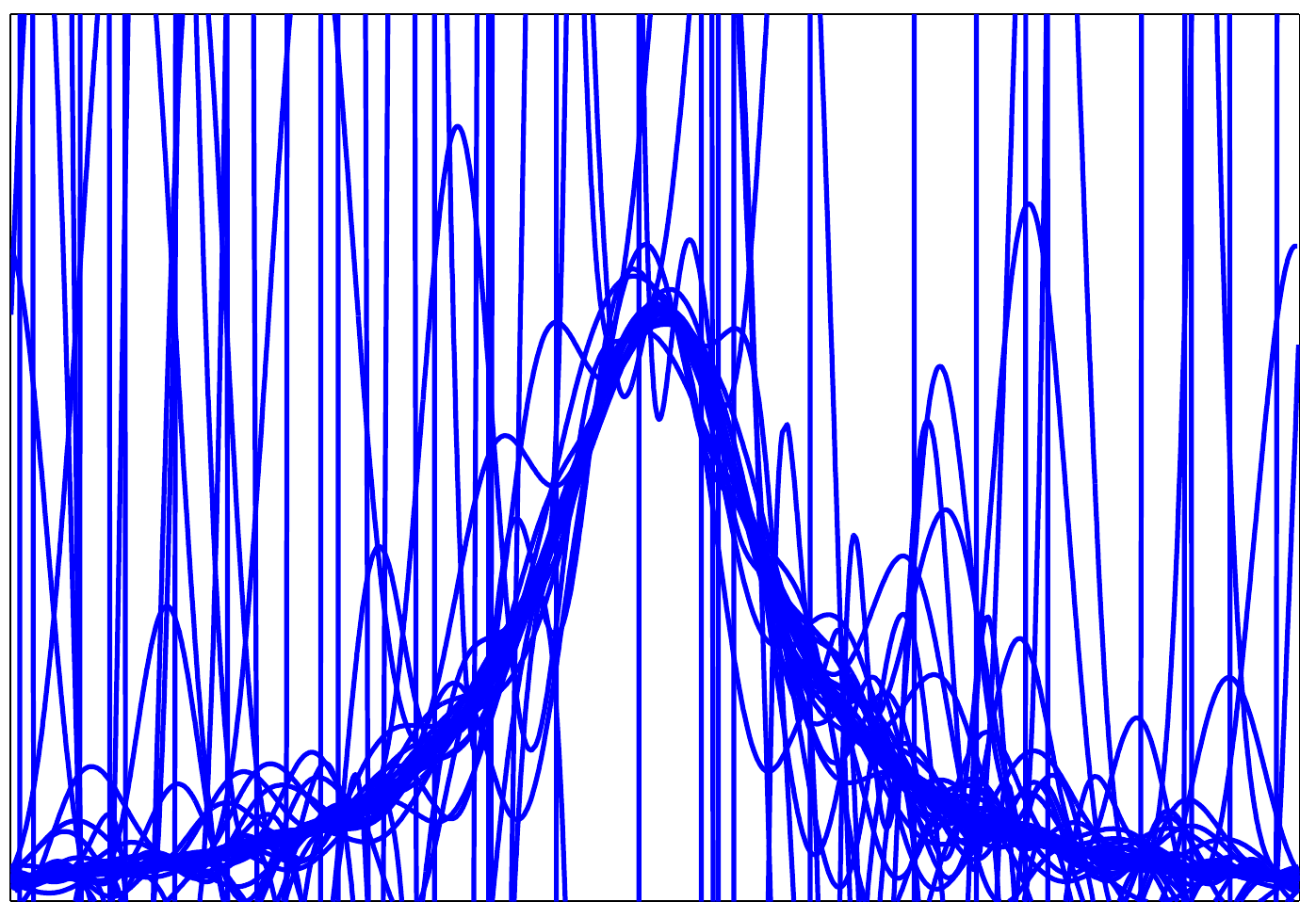} 
}

\subfigure[Unweighted $\ell_1$]{
    \includegraphics[height=2.5cm,width=4cm]{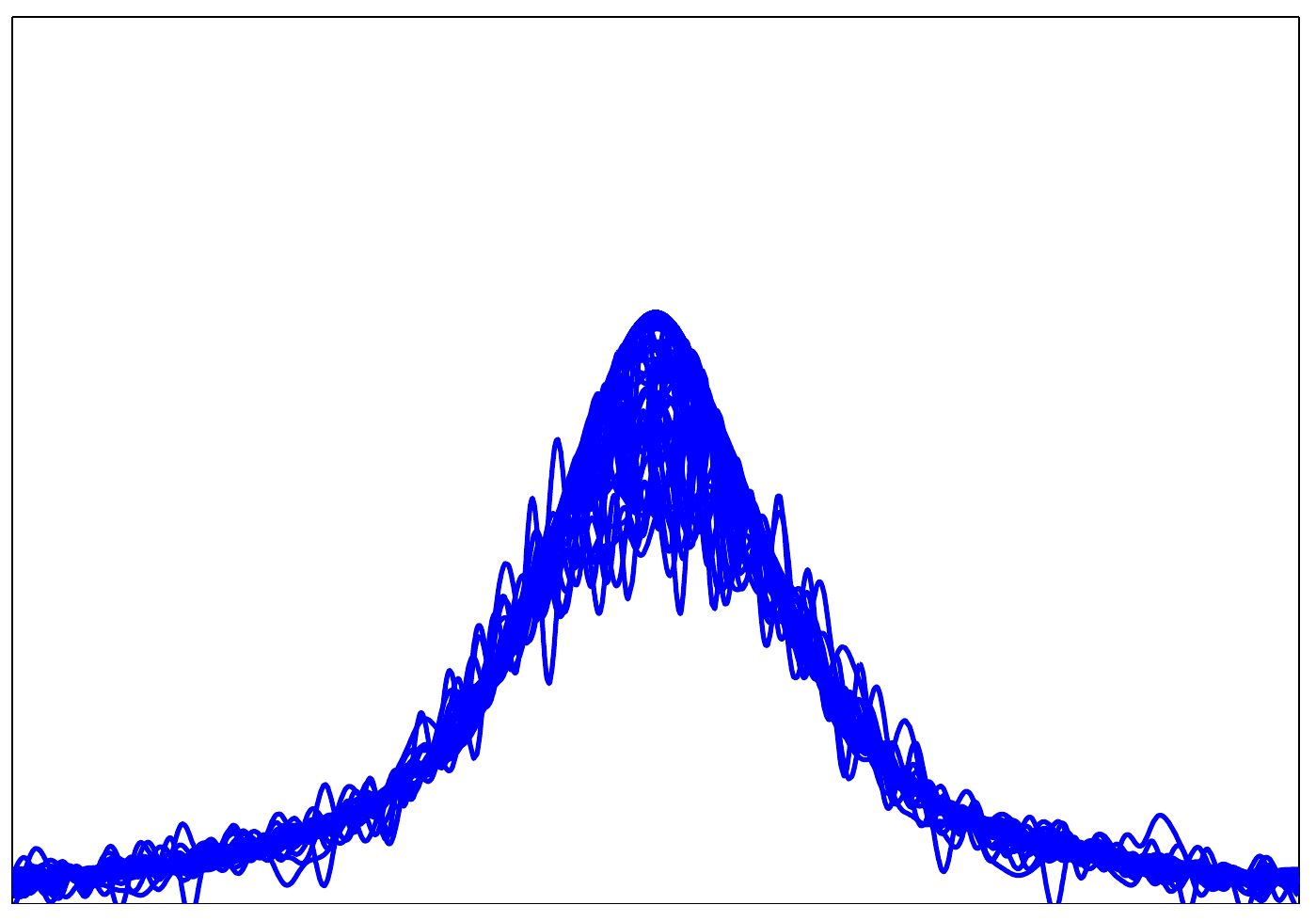} 
    }
    \subfigure[Weighted $\ell_1$, $\omega_j = j^{1/2}$]{
 \includegraphics[height=2.5cm,width=4cm]{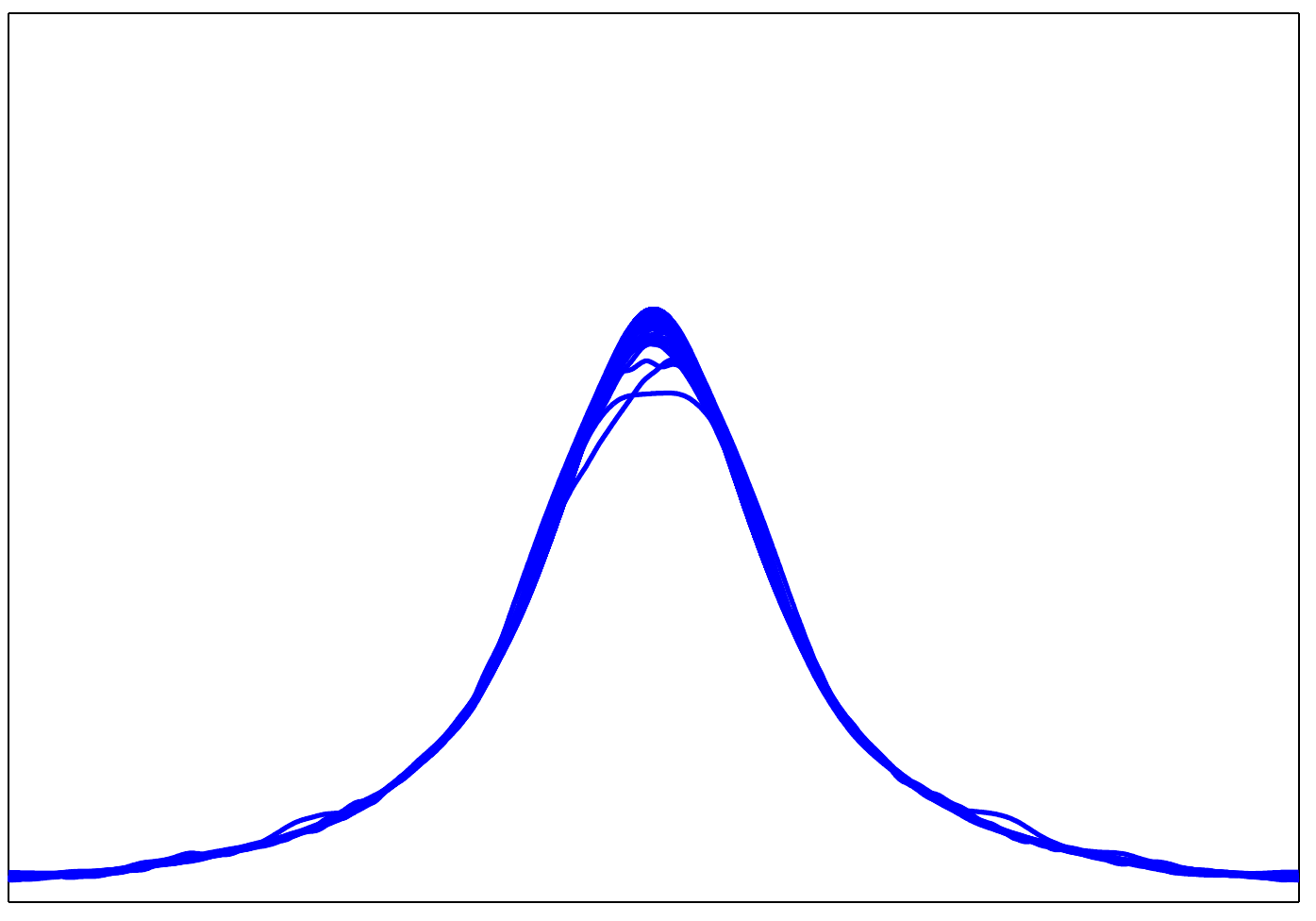}
 }
 \subfigure[Weighted $\ell_1$, $\omega_j = j$]{
  \includegraphics[height=2.5cm,width=4cm]{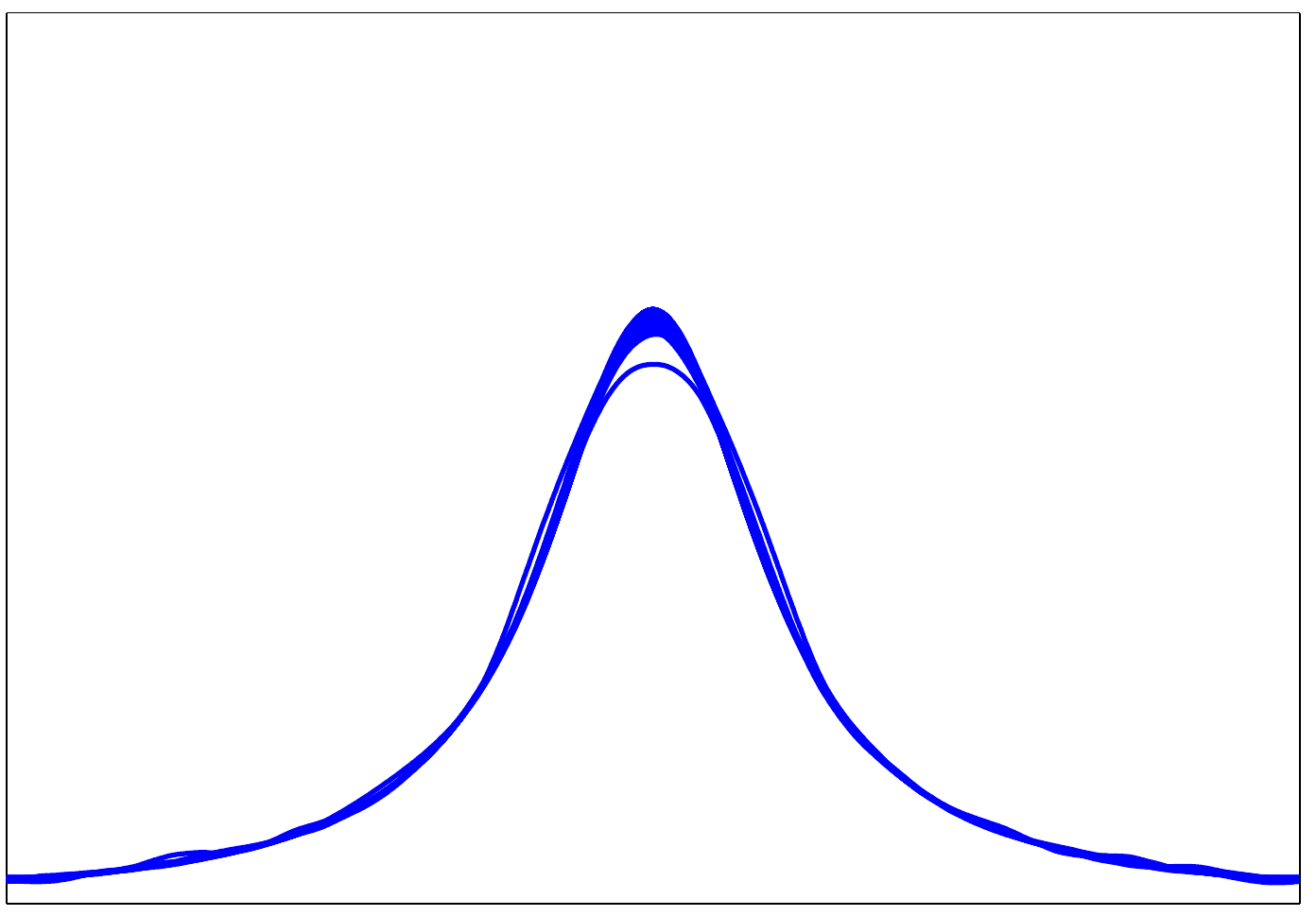}
}
\end{center}
\caption{Overlaid interpolations of the function $f(t) = \frac{1}{1+ 25t^2}$, $t \in [-1,1]$, by real trigonometric polynomials using various reconstruction methods as described in Section \ref{numerics}.  Different interpolations correspond to different  random draws of $m=30$ sampling points from the uniform measure on $[-1,1]$.}

\label{fig:1}
\end{figure} 

\begin{figure}[h!]
\begin{center}


\subfigure[Least squares]{
  \includegraphics[height=2.5cm,width=4cm]{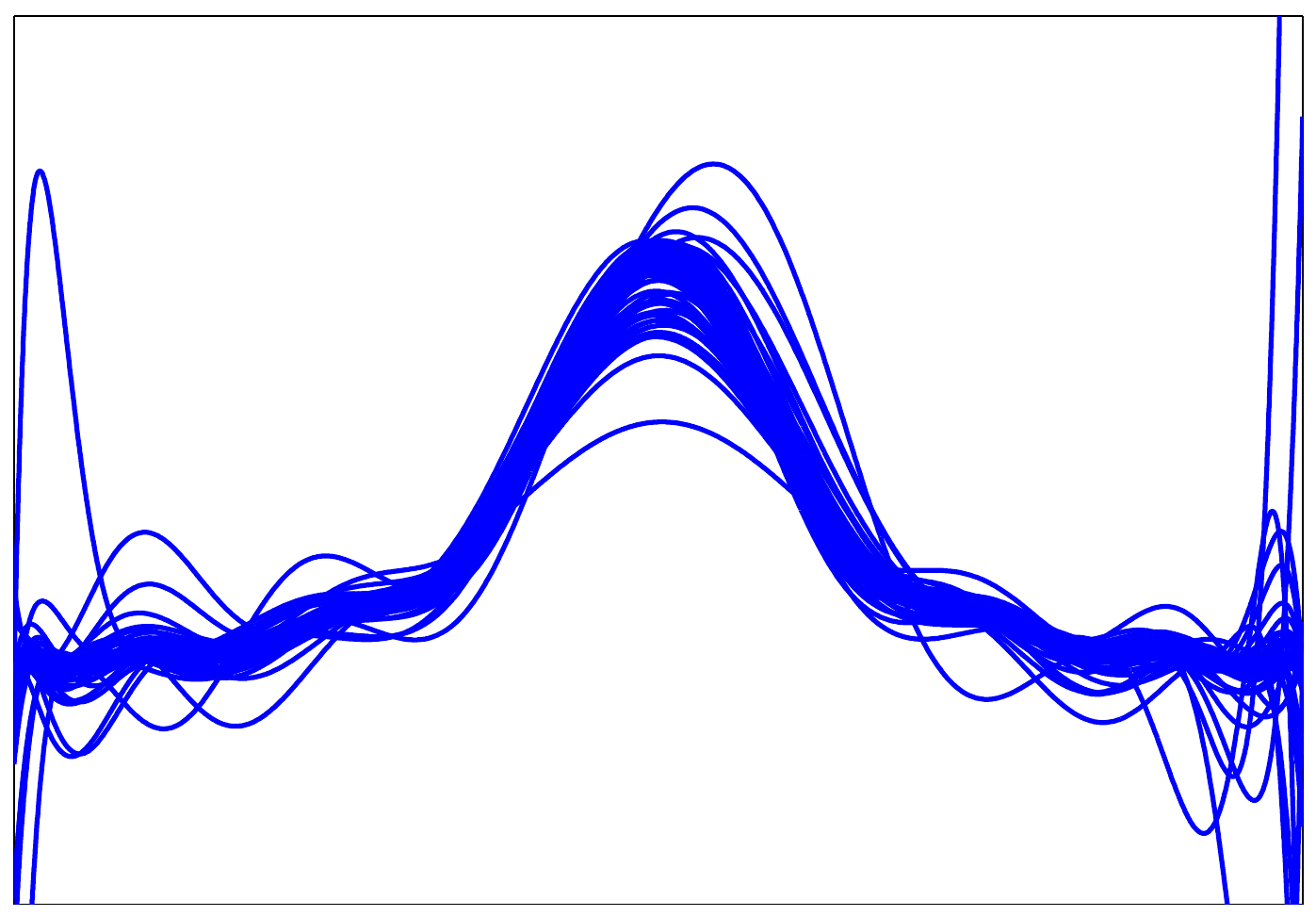}
  }
 \subfigure[Weighted $\ell_2$, $\omega_j = j^{1/2}$]{
 \includegraphics[height=2.5cm,width=4cm]{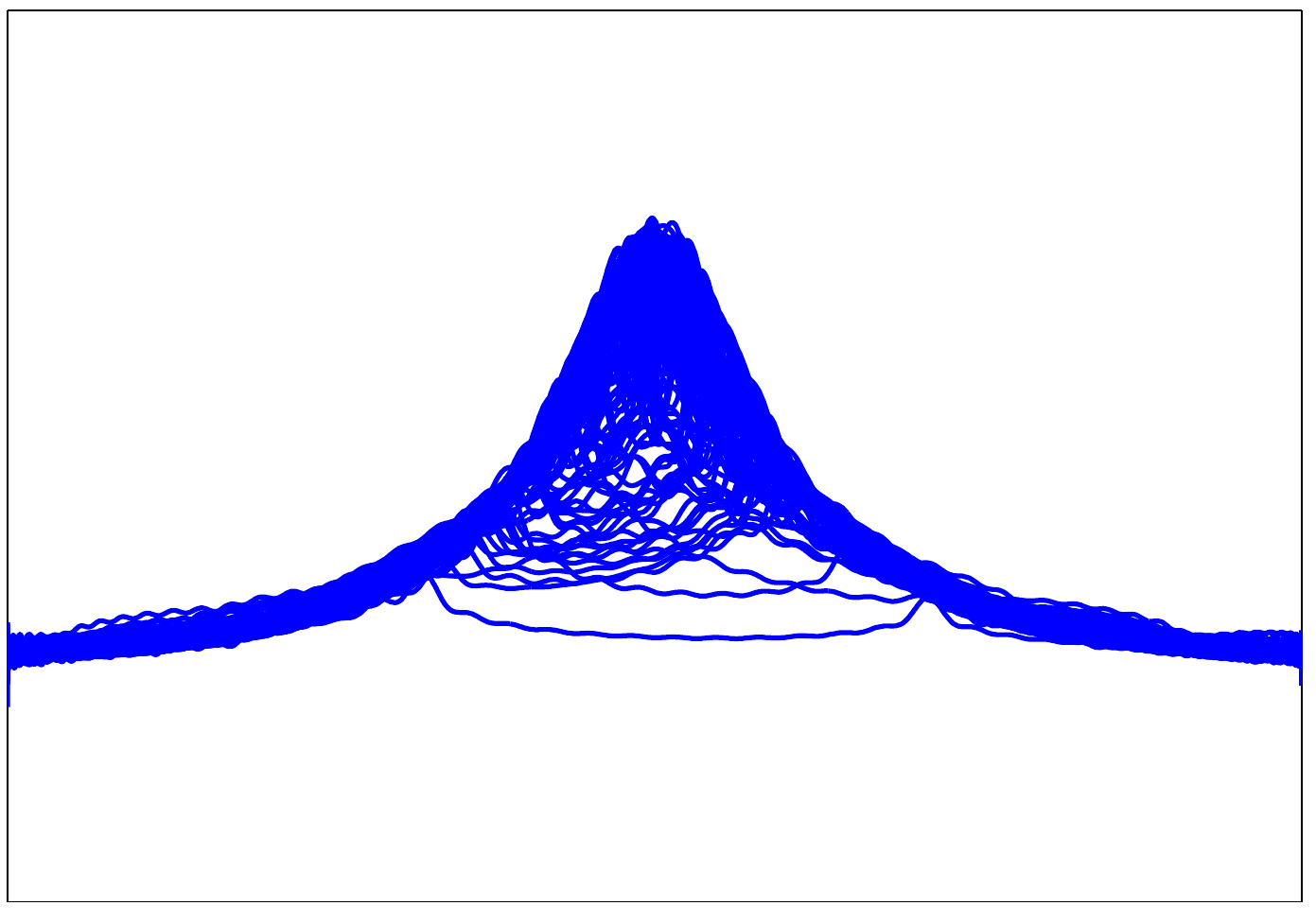} 
 }
 \subfigure[Exact inversion]{
  \includegraphics[height=2.5cm,width=4cm]{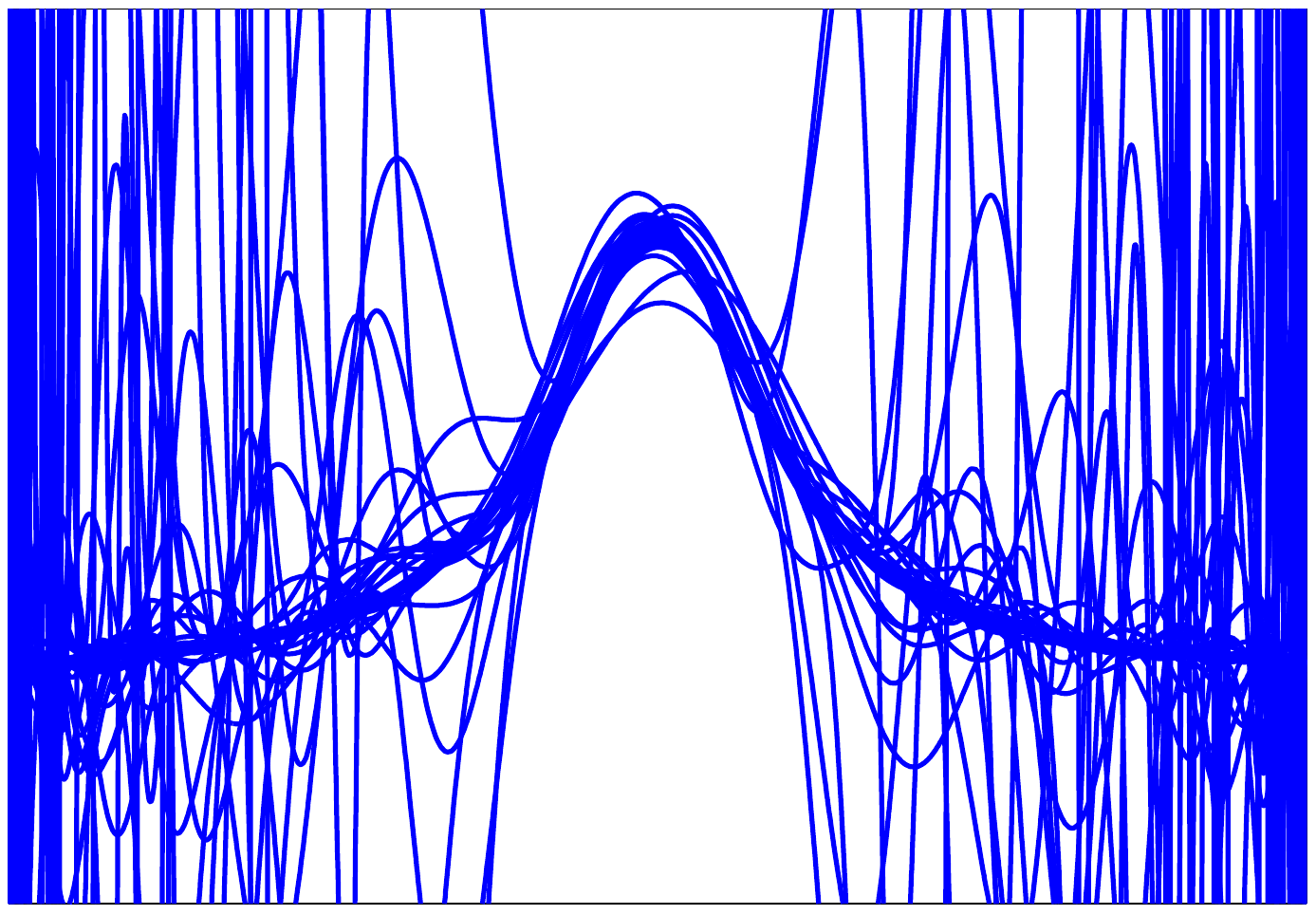} 
}

\subfigure[Unweighted $\ell_1$]{
    \includegraphics[height=2.5cm,width=4cm]{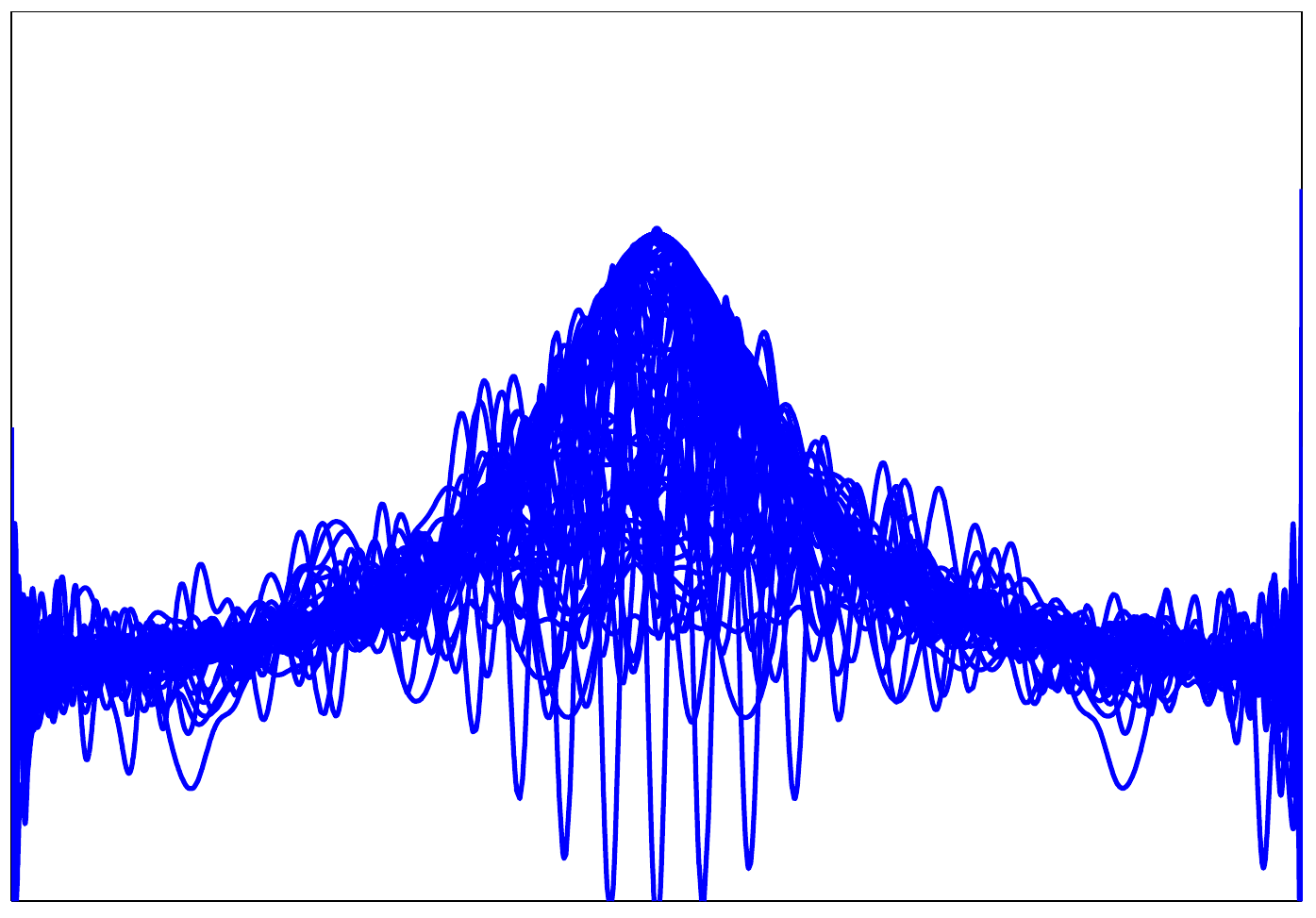} 
    }
    \subfigure[Weighted $\ell_1$, $\omega_j = j^{1/2}$]{
 \includegraphics[height=2.5cm,width=4cm]{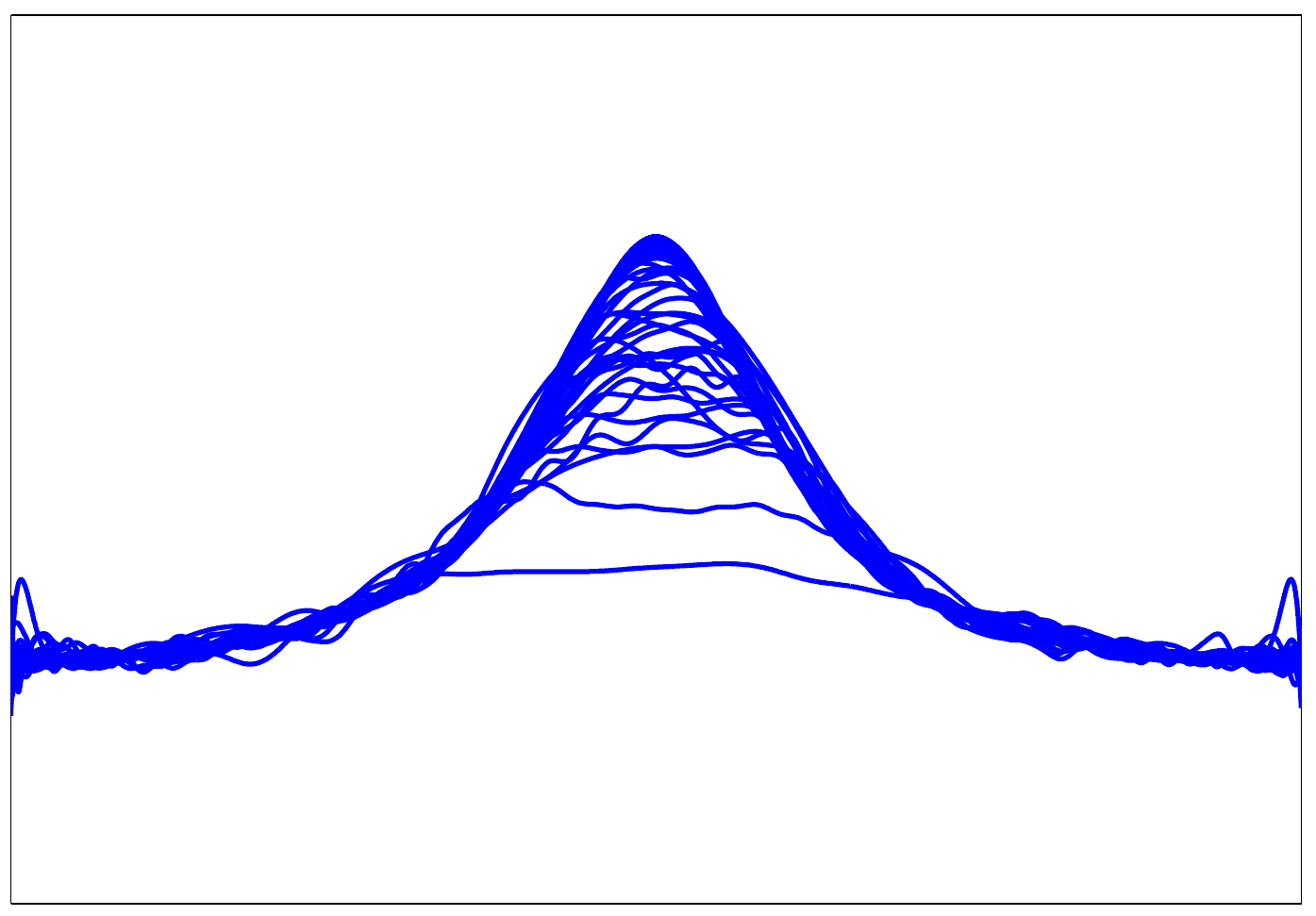}
 }
 \subfigure[Weighted $\ell_1$, $\omega_j = j$]{
  \includegraphics[height=2.5cm,width=4cm]{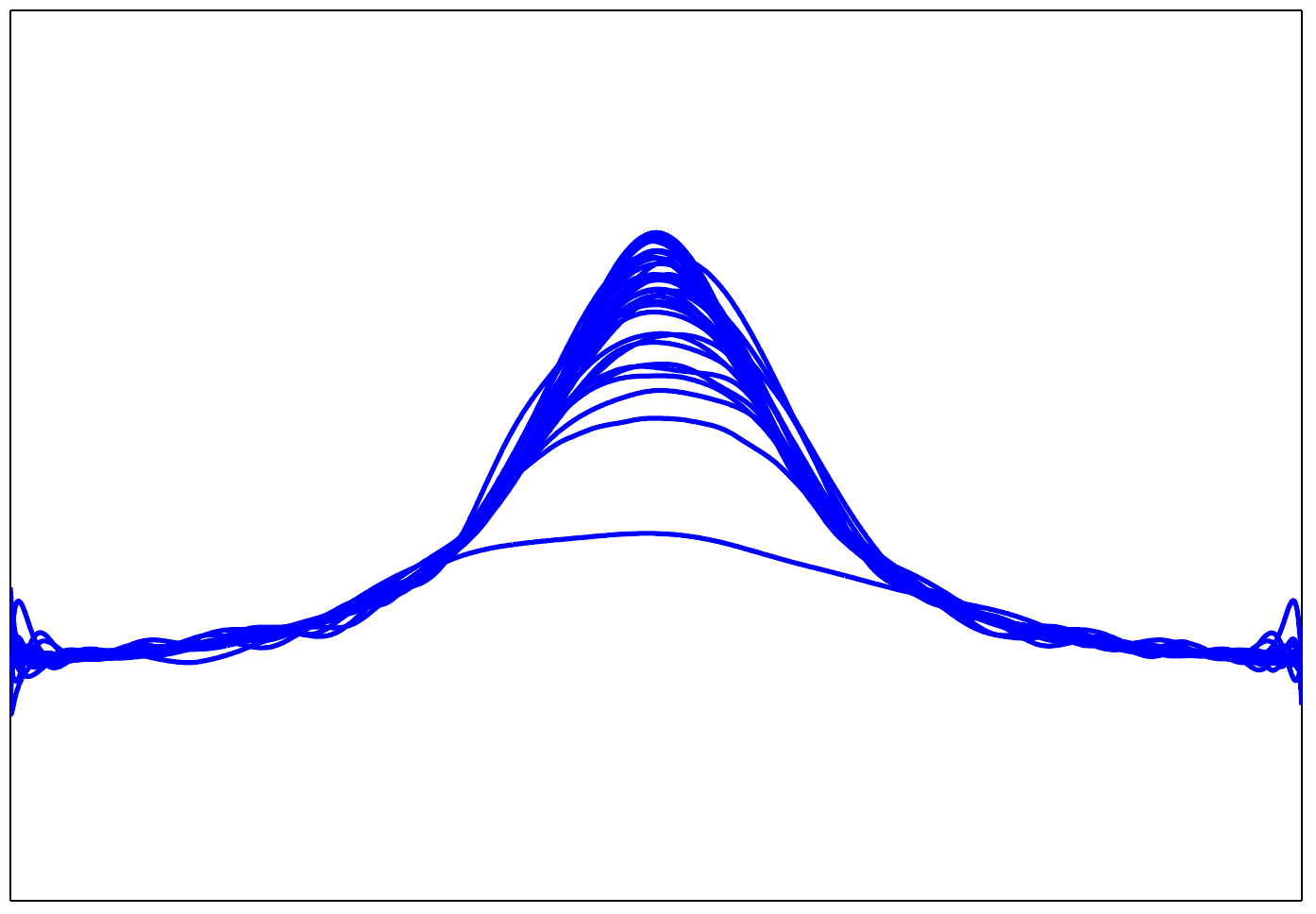}
}
\end{center}
\caption{Overlaid interpolations of the function $f(t) = \frac{1}{1+ 25t^2}$ by Legendre polynomials using various reconstruction methods as described in Section \ref{numerics}.  Different interpolations correspond to different  random draws of $m=30$ sampling points according the Chebyshev measure on $[-1,1]$.}
\label{fig:2}
\end{figure}

{\section{Weighted sparsity and quasi-best $s$-term approximations}\label{stechkin}}

In this section we revisit some important technical results pertaining to weighted $\ell_p$ spaces that were touched upon in the introduction.  First, unlike unweighted $s$-term approximations for finite vectors, the weighted $s$-term approximations $\sigma_s(\x)_{\omega, p} = \inf_{\z : \|\z\|_{\omega, 0} \leq s} \|\x-\z\|_{\omega, p}$ are not straightforward 
to compute in general.  Nevertheless, we can approximate $\sigma_s(\x)_{\omega,p}$ using a quantity that can easily be computed from $\x$ by sorting and thresholding, which we will call the quasi-best $s$-term approximation.

Let $\vv$ denote the non-increasing rearrangement of the sequence  
$(|x_j|^p \omega_j^{-p})$, that is, $v_j = | x_{\pi(j)} |^p \omega_{\pi(j)}^{-p}$ for some permutation $\pi$ such that $v_1 \geq v_2 \geq \dots \geq 0$. Let $k$ be the maximal number such that $\sum_{j=1}^k \omega_{\pi(j)}^2 \leq s$ and set $S= \{\pi(1),\pi(2),\hdots,\pi(k)\}$ so that
$\omega(S) \leq s$.  Then we call $\x_S$ a weighted quasi-best $s$-term approximation to $\x$ and define the corresponding error of weighted quasi-best
$s$-term approximation as
\[
\widetilde{\sigma}_s(\x)_{\omega,p} = \|\x-\x_S\|_{\omega,p} = \|\x_{S^c}\|_{\omega,p}.
\] 
By definition, $\sigma_s(\x)_{\omega, p} \leq \widetilde{\sigma}_s(\x)_{\omega,p}$. We also have a converse inequality relating the two $s$-term approximations in the case of bounded weights.

\begin{lemma}
\label{best:quasi}
Suppose that {$ s \geq \| \w \|^2_{\infty}$}. Then
$$
\widetilde{\sigma}_{3s}(\x)_{\omega,p} \leq \sigma_s(\x)_{\omega,p}
$$
\end{lemma}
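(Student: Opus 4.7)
The plan is to interpret both quantities through a knapsack-style framework on the index set $\Lambda$, where index $j$ is assigned ``weight'' $\omega_j^2$ and ``value'' $\omega_j^{2-p}|x_j|^p$, so that the ratio of value to weight is exactly $v_j = |x_j|^p \omega_j^{-p}$. Since $\|\x_A\|_{\omega,p}^p = \sum_{j \in A} \omega_j^{2-p}|x_j|^p = \sum_{j \in A} v_j \omega_j^2$, and $\|\x\|_{\omega,p}^p = \|\x_A\|_{\omega,p}^p + \|\x_{A^c}\|_{\omega,p}^p$ for any $A$, the desired inequality $\widetilde{\sigma}_{3s}(\x)_{\omega,p} \leq \sigma_s(\x)_{\omega,p}$ is equivalent to $\|\x_S\|_{\omega,p}^p \geq \|\x_T\|_{\omega,p}^p$, where $T$ attains $\sigma_s(\x)_{\omega,p}$ (so $\omega(T) \leq s$) and $S$ is the quasi-best set associated with the budget $3s$, namely the prefix of the non-increasing rearrangement of $(v_j)$ with $\omega(S) \leq 3s$. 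The greedy nature of the quasi-best rule suggests an exchange argument, with the jump from $s$ to $3s$ providing the slack needed to compensate for greedy suboptimality.

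The first step is a lower bound on $\omega(S)$. If $S = \Lambda$ the claim is trivial since $\x_{S^c} = 0$. Otherwise maximality of the prefix means that adjoining the next index would overshoot: $\omega(S) + \omega_{\pi(k+1)}^2 > 3s$. Since $\omega_{\pi(k+1)}^2 \leq \|\w\|_\infty^2 \leq s$ by hypothesis, this yields $\omega(S) > 2s \geq 2\omega(T) \geq \omega(T)$. This is precisely where the hypothesis $\|\w\|_\infty^2 \leq s$ enters, and it is the reason a factor of $3$ (rather than $1$) is needed in the statement.

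The second step is the exchange itself. Write $R = S \setminus T$, $Q = T \setminus S$. The common part $S \cap T$ cancels in $\|\x_S\|_{\omega,p}^p - \|\x_T\|_{\omega,p}^p$, so the inequality reduces to $\sum_{j \in R} v_j \omega_j^2 \geq \sum_{k \in Q} v_k \omega_k^2$. Because $S$ is a prefix of the ordering by decreasing $v$, every $j \in R \subset S$ satisfies $v_j \geq v_k$ for every $k \in Q \subset S^c$; setting $m = \min_{j \in R} v_j \geq \max_{k \in Q} v_k$, one bounds
\[
\sum_{j \in R} v_j \omega_j^2 \geq m \, \omega(R), \qquad \sum_{k \in Q} v_k \omega_k^2 \leq m \, \omega(Q).
\]
Finally, $\omega(R) - \omega(Q) = \omega(S) - \omega(T) > 0$ by the previous step, so $\omega(R) \geq \omega(Q)$ and the two sums compare as required.

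I do not foresee substantive obstacles: once the value-to-weight ratio $v_j$ is identified as the right greedy criterion, everything reduces to a clean exchange. The only cautions are handling the degenerate case $S = \Lambda$ and noting that $R$ cannot be empty when $S \subsetneq \Lambda$ (since $R = \emptyset$ would force $\omega(S) \leq \omega(T) \leq s$, contradicting $\omega(S) > 2s$). The whole argument rests precisely on the interplay between the hypothesis $\|\w\|_\infty^2 \leq s$ and the enlarged budget $3s$, which together guarantee $\omega(S) \geq \omega(T)$.
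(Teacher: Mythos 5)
Your proof is correct, and it takes a genuinely different route from the paper's. Both arguments make the same initial reduction --- since the $p$-th power of $\|\cdot\|_{\omega,p}$ is additive over disjoint supports, it suffices to show that the quasi-best set (your $S$, budget $3s$) captures at least as much weighted mass as the optimal set (your $T$, budget $s$) --- and both exploit the same consequence of maximality plus $\|\w\|_\infty^2 \leq s$, namely $\omega(S) > 2s$. The comparison step is where you diverge. The paper rounds each weight up to an integer $n_j = \lfloor \omega_j^2 + 1\rfloor$, splits the $j$-th value into $n_j$ pieces of equal density to form an auxiliary vector $\z$, and bounds the optimal value by a maximum over unstructured index sets of cardinality at most $\sum_{j \in S} n_j$; the rounding costs an additive $|T| \leq s$, which is precisely why the paper's chain needs $\omega(S) \geq 2s$ and hence the budget $3s$. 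Your exchange argument on $R = S \setminus T$ and $Q = T \setminus S$ --- using that the density $v_j = |x_j|^p \omega_j^{-p}$ on the prefix $S$ dominates the density off $S$, together with $\omega(R) - \omega(Q) = \omega(S) - \omega(T) > 0$ --- avoids the splitting machinery entirely and is cleaner. It also buys something concrete: you only ever use $\omega(S) > \omega(T)$, not $\omega(S) \geq 2s$, so running the identical argument with budget $2s$ gives $\omega(S) > 2s - \|\w\|_\infty^2 \geq s \geq \omega(T)$ and hence the sharper conclusion $\widetilde{\sigma}_{2s}(\x)_{\omega,p} \leq \sigma_s(\x)_{\omega,p}$. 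Your handling of the edge cases ($S = \Lambda$, and $R = \emptyset$ being impossible otherwise) is also correct.
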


\begin{proof}
Let $\vct{x}_S$ be the weighted best $s$-term approximation to $\vct{x}$, and let $\vct{x}_{\tilde{S}}$ be the weighted quasi-best $3s$-term approximation to $\vct{x}$.  Because the supports of $\vct{x}_S$ and $\vct{x} - \vct{x}_S$, and also $\vct{x}_{\tilde{S}}$ and $\vct{x} - \vct{x}_{\tilde{S}}$, do not overlap, it suffices to show that
$$
\| \vct{x}_{S} \|_{\omega,p} \leq \| \vct{x}_{\tilde{S}} \|_{\omega,p}.
$$
Assume without loss of generality that the terms in $\vct{x}$ are ordered so that $|x_j|^p \omega_j^{-p} \geq |x_{j+1}|^p \omega_{j+1}^{-p}$ for all $j$; let $J$ be the largest integer such that $\sum_{j=1}^J \omega^2_j \leq 3s$, and $\tilde{S} = \{1,2, \dots, J \}$.  Because $| \omega_j |^2 \leq s,$ we know also that $\sum_{j=1}^J \omega^2_j \geq 2s$.  Let $n_j = \lfloor \omega_j^2 + 1 \rfloor$ be the largest integer less than or equal to $\omega_j^2 + 1$, and let $r_j = n_j - \omega_j^2$.   Then $ \sum_{j \in S} \omega_j^2 \leq s$ implies that
\begin{equation}
\sum_{j \in S} n_j \leq \sum_{j \in S} \omega_j^2 + |S| \leq s + s \leq \sum_{j=1}^J \omega^2_j \leq \sum_{j=1}^J n_j.
\end{equation}
Now, let $\vct{z}$ be the vector
$$
\big( \underbrace{ |x_1|^p \omega_1^{-p}, \dots, |x_1|^p \omega_1^{-p}, (1-r_1) |x_1|^p \omega_1^{-p} }_\text{$n_1$ coefficients }, \hspace{1mm} \underbrace{|x_2|^p \omega_2^{-p}, \dots, |x_2|^p \omega_2^{-p}, (1-r_2) |x_2|^p \omega_2^{-p} }_\text{$n_2$ coefficients}, \dots \big).$$
We constructed $\vct{z}$ so that the first $n_1$ terms in $\vct{z}$ sum to $|x_1|^p \omega_1^{2-p}$, the next $n_2$ terms sum to  $|x_2|^p \omega_2^{2-p},$ and so on. 
 Then 
\begin{align}
\| \vct{x}_{S} \|_{\omega,p}^p &:= \max \left\{ \sum_{j \in S} \omega^2_j  | x_j |^p \omega_j^{-p} : S  \text{ is such that } \sum_{j \in S} \omega_j^2 \leq s \right\} \nonumber \\
&\leq \max \left\{ \sum_{j \in S} \omega^2_j  | x_j |^p \omega_j^{-p} : S \text{ is such that } \sum_{j \in S} n_j \leq \sum_{j=1}^J n_j \right\}  \nonumber \\
&\leq \max \left\{ \sum_{k \in \Lambda}  z_k : \Lambda \text{ is such that }  \# \Lambda \leq \sum_{j=1}^J n_j \right\} 
\leq \sum_{j=1}^{J} \omega_j^{2-p} |x_j|^p = \| \vct{x}_{\tilde{S}} \|_{\omega,p}^p \nonumber
\end{align}%
where in the last line the maximum is taken over all $\Lambda$ of the form $\Lambda = \cup_{k} \Lambda_k$, where each $\Lambda_k$ is a block of the first $n_1$ indices, or the second $n_2$ indices etc. and such that $|\Lambda| \leq \sum_{j=1}^J n_j$.  This completes the proof.
\end{proof}
In the remainder of this section, we prove the Stechkin-type estimate \eqref{stech1} which bounds the quasi-best $s$-term approximation of a vector (and hence also the best $s$-term approximation) using an appropriate weighted vector norm.

\begin{theorem}\label{thm:weighted:Stechkin} For $p < q \leq 2$, let $\x \in \ell_{\omega,p}$. Then, for {$s > \|\w\|_\infty^2$},
\begin{equation}\label{weighted:Stechkin}
\sigma_{s}(\x)_{\omega,q} \leq \widetilde{\sigma}_{s}(\x)_{\omega,q} \leq \big( s - \|\w\|_\infty^2 \big)^{1/q-1/p} \|\x\|_{\omega,p}.
\end{equation}
\end{theorem}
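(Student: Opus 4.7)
The first inequality $\sigma_s(\x)_{\omega,q} \leq \widetilde{\sigma}_s(\x)_{\omega,q}$ is immediate from the definitions, since the support of the weighted quasi-best $s$-term approximation has weighted cardinality $\omega(S) \leq s$, so it is a feasible point in the infimum defining $\sigma_s(\x)_{\omega,q}$.

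For the nontrivial inequality, my plan is to exploit the monotonicity built into the construction of the quasi-best approximation. Without loss of generality reindex so that $v_j := |x_j|^p \omega_j^{-p}$ is non-increasing, and let $k$ be maximal with $\sum_{j=1}^k \omega_j^2 \leq s$, so that $\widetilde{\sigma}_s(\x)_{\omega,q}^q = \sum_{j>k} \omega_j^{2-q}|x_j|^q$. The key algebraic observation is the rewriting
\[
\omega_j^{2-q} |x_j|^q \;=\; \omega_j^2 \bigl(\omega_j^{-p}|x_j|^p\bigr)^{q/p} \;=\; \omega_j^2 v_j^{\,r},\qquad r := q/p > 1,
\]
which turns both the norm $\|\x\|_{\omega,p}^p = \sum_j \omega_j^2 v_j$ and the tail $\widetilde{\sigma}_s(\x)_{\omega,q}^q = \sum_{j>k} \omega_j^2 v_j^r$ into $\omega^2$-weighted sums of a monotone sequence and its power.

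The main step is a pointwise bound on the tail values $v_j$, $j>k$. Using $v_j \leq v_i$ for $i \leq j$,
\[
v_j \sum_{i=1}^{j-1} \omega_i^2 \;\leq\; \sum_{i=1}^{j-1} \omega_i^2 v_i \;\leq\; \|\x\|_{\omega,p}^p.
\]
To extract the factor $(s - \|\w\|_\infty^2)$ in the final bound I need a lower bound on $\sum_{i=1}^{j-1} \omega_i^2$ for every $j \geq k+1$. Since $j-1 \geq k$ and $k$ is maximal, $\sum_{i=1}^{k+1} \omega_i^2 > s$; subtracting $\omega_{k+1}^2 \leq \|\w\|_\infty^2$ yields $\sum_{i=1}^{j-1} \omega_i^2 \geq \sum_{i=1}^k \omega_i^2 > s - \|\w\|_\infty^2$. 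Consequently
\[
v_j \;\leq\; \frac{\|\x\|_{\omega,p}^p}{s - \|\w\|_\infty^2},\qquad j > k.
\]

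The final step is to split one factor of $v_j$ off from $v_j^r$, apply this pointwise bound to $v_j^{r-1}$, and absorb the remaining $\omega_j^2 v_j$ sum back into $\|\x\|_{\omega,p}^p$:
\[
\sum_{j>k} \omega_j^2 v_j^{\,r} \;\leq\; \Bigl(\tfrac{\|\x\|_{\omega,p}^p}{s - \|\w\|_\infty^2}\Bigr)^{r-1} \sum_{j>k} \omega_j^2 v_j \;\leq\; (s - \|\w\|_\infty^2)^{1-r}\,\|\x\|_{\omega,p}^{pr}.
\]
Taking $q$-th roots and using $pr = q$, $1-r = 1-q/p$ gives the claim. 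I do not anticipate a real obstacle; the only point requiring care is the choice of the lower bound $\sum_{i=1}^{j-1} \omega_i^2 > s - \|\w\|_\infty^2$ (rather than the naive $\sum_{i=1}^{j} \omega_i^2 > s$) to recover exactly the constant $(s-\|\w\|_\infty^2)^{1/q-1/p}$ stated in the theorem, and the assumption $s > \|\w\|_\infty^2$ which ensures this quantity is positive.
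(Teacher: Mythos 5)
Your proposal is correct and follows essentially the same route as the paper's proof: both arguments bound the largest tail value $|x_j|^p\omega_j^{-p}$ by an $\omega^2$-weighted average over the head $S$ (whose weighted cardinality exceeds $s-\|\w\|_\infty^2$ by maximality of $k$), and then split the $q$-th power as a $(q-p)$-th power of that maximum times the $\ell_{\omega,p}$ tail sum. The substitution $v_j = |x_j|^p\omega_j^{-p}$ and the order of the two steps are only cosmetic differences.
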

\begin{proof}
Let $S$ be the support of the weighted quasi-best $s$-term approximation, so that 
$\widetilde{\sigma}_s(\x)_{\omega,p} = \|\x-\x_S\|_{\omega,p}$. Since the number $k$ in the construction of $S$ is maximal, we have with $\pi$ denoting the corresponding permutation,
\[
s-\|\w\|_\infty^2 \leq s - \omega_{\pi(k)}^2 < \omega(S) \leq s. 
\]
Then
\[
\widetilde{\sigma}_s(\x)_{\omega,p}^p \leq \sum_{j \notin S} |x_j|^p \omega^{2-p}_j \leq
\max_{j \notin S} \{ |x_j|^{p-q} \omega_j^{q-p} \} \sum_{j \notin S} |x_j|^{q} \omega_j^{2-q}
\leq \left( \max_{j \notin S} |x_j| \omega_j^{-1} \right)^{p-q} \|\x\|_{\omega,q}^{q}.
\]
Now let $\alpha_k := (\sum_{j \in S} \omega_j^2)^{-1} \omega_k^2 \leq (s-\| \w\|_{\infty}^2)^{-1} \omega_k^2$.
Then $\sum_{j \in S} \alpha_k =1$. Moreover, by definition of $S$ we have $|x_j| \omega_j^{-1} \leq |x_k| \omega_k^{-1}$ for all
$k \in S$ and $j \notin S$. This implies
\[
\left(\max_{j \notin S} |x_j| \omega_j^{-1}\right)^q \leq \sum_{k \in S} \alpha_k |x_k|^{q} \omega_k^{-q}
\leq (s-\| \w\|_{\infty}^2)^{-1} \sum_{k \in S} \omega_k^{2-q} |x_k|^q
\leq (s-\| \w\|_{\infty}^2)^{-1} \|\x\|_{\omega,q}^q.
\]
Combining the above estimates yields
\[
\widetilde{\sigma}_s(\x)_{\omega,p}^p \leq  \left((s-\| \w\|_{\infty}^2)^{-1} \|\x\|_{\omega,q}^q\right)^{(p-q)/q} \|\x\|_{\omega,q}^{q}
\]
which is equivalent to the claim.
\end{proof}
Theorem \ref{thm:weighted:Stechkin} will be used in deriving weighted null space properties and weighted restricted isometry properties in the following sections.

\section{Weighted null space and restricted isometry property}

As for unweighted $\ell_1$ minimization, one can derive reconstruction guarantees for weighted $\ell_1$ minimization via appropriate weighted versions of the null space property and restricted isometry property \cite{cohen2009, cata06}. Below we work out these approaches.

\subsection{Weighted null space property}

We start directly with a {robust} version of the null space property in the weighted case.

\begin{definition}[Weighted robust null space property]
Given a weight $\w$, a matrix $\A \in \C^{m \times N}$ is said to satisfy the weighted 
robust null space property of order $s$ with constants $\rho \in (0,1)$ and $\tau > 0$ if
\begin{align}\label{robustNSP}
\|\vv_{S}\|_{2} \leq \frac{\rho}{\sqrt{s}} \|\vv_{S^c}\|_{\omega,1} + \tau \|\A \vv\|_2 \quad \mbox{ for all } \vv \in \C^{N} \mbox{ and all } 
S \subset [N] \mbox{ with } \omega(S) \leq s.
\end{align}
\end{definition}
The inequalities stated in the next theorem are crucial for deriving error bounds for recovery via weighted $\ell_1$ minimization.
\begin{theorem}\label{thmNSP} Suppose that $\A \in \C^{m \times N}$ is such that \eqref{robustNSP} holds for $\rho \in (0,1)$ and $\tau > 0$. 
Then, for all $\x,\z \in \C^N$, we have
\begin{align}\label{l1errorbound}
\|\z-\x\|_{\omega,1} \leq \frac{1+\rho}{1-\rho}\left(\|\z\|_{\omega,1} - \|\x\|_{\omega,1} + 2 \sigma_{s}(\x)_{\omega,1}\right) + \frac{2\tau\sqrt{s}}{1-\rho} \|\A(\z-\x)\|_2
\end{align}
and, additionally assuming $s \geq 2 \|\w \|_\infty^2$, 
\begin{equation}\label{l2errorbound}
\|\x-\z\|_2 \leq \frac{C_1}{\sqrt{s}} \left(\|\z\|_{\omega,1} - \|\x\|_{\omega,1} + 2 \sigma_{s}(\x)_{\omega,1}\right) + C_2 \|\A(\x-\z)\|_2.
\end{equation}
\end{theorem}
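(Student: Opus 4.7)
The plan is to follow the classical null-space-property recovery argument, adapted to the weighted setting, where the weighted Stechkin estimate (Theorem~\ref{thm:weighted:Stechkin}) replaces the elementary block-decomposition trick that is used in the unweighted case.

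\textbf{Setup.} I first turn the robust NSP, which controls $\|\vv_S\|_2$, into a weighted $\ell_1$ version by Cauchy--Schwarz: since $\omega(S)\leq s$,
\[
\|\vv_S\|_{\omega,1} \;\leq\; \sqrt{\omega(S)}\,\|\vv_S\|_2 \;\leq\; \sqrt{s}\,\|\vv_S\|_2,
\]
which combined with \eqref{robustNSP} yields
\[
\|\vv_S\|_{\omega,1} \;\leq\; \rho\,\|\vv_{S^c}\|_{\omega,1} + \tau\sqrt{s}\,\|\A\vv\|_2
\]
for every $\vv$ and every $S$ with $\omega(S)\leq s$.

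\textbf{Proof of \eqref{l1errorbound}.} I take $S$ to be the support of a weighted best $s$-term approximation of $\x$, so $\omega(S)\leq s$ and $\|\x_{S^c}\|_{\omega,1}=\sigma_s(\x)_{\omega,1}$, and set $\vv=\z-\x$. Splitting $\|\z\|_{\omega,1}$ along $S$ and $S^c$ and applying the triangle inequality in the two standard directions gives
\[
\|\vv_{S^c}\|_{\omega,1} \;\leq\; \|\z\|_{\omega,1}-\|\x\|_{\omega,1}+2\sigma_s(\x)_{\omega,1}+\|\vv_S\|_{\omega,1}.
\]
Inserting the weighted $\ell_1$ NSP bound on $\|\vv_S\|_{\omega,1}$ and isolating $\|\vv_{S^c}\|_{\omega,1}$ produces a $(1-\rho)$-factor on the left. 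Using $\|\vv\|_{\omega,1}\leq (1+\rho)\|\vv_{S^c}\|_{\omega,1}+\tau\sqrt{s}\|\A\vv\|_2$ then yields exactly \eqref{l1errorbound}; the constant $2\tau\sqrt{s}/(1-\rho)$ arises because the two contributions $\tfrac{(1+\rho)\tau\sqrt{s}}{1-\rho}$ and $\tau\sqrt{s}$ sum cleanly.

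\textbf{Proof of \eqref{l2errorbound}.} Here I decompose $\vv=\z-\x$ via its weighted quasi-best $s$-term approximation. Let $T$ be the support produced by sorting $(|v_j|/\omega_j)$ and greedily adding indices while $\omega(T)\leq s$. Then $\omega(T)\leq s$, so the robust NSP gives
\[
\|\vv_T\|_2 \;\leq\; \frac{\rho}{\sqrt{s}}\|\vv_{T^c}\|_{\omega,1}+\tau\|\A\vv\|_2 \;\leq\; \frac{\rho}{\sqrt{s}}\|\vv\|_{\omega,1}+\tau\|\A\vv\|_2,
\]
while the weighted Stechkin inequality of Theorem~\ref{thm:weighted:Stechkin} with $p=1$, $q=2$, applied to $\vv$, controls the tail: using $\|\cdot\|_{\omega,2}=\|\cdot\|_2$ and the assumption $s\geq 2\|\w\|_\infty^2$,
\[
\|\vv_{T^c}\|_2 \;=\; \widetilde\sigma_s(\vv)_{\omega,2} \;\leq\; (s-\|\w\|_\infty^2)^{-1/2}\|\vv\|_{\omega,1} \;\leq\; \sqrt{2/s}\;\|\vv\|_{\omega,1}.
\]
Adding the two pieces,
\[
\|\vv\|_2 \;\leq\; \frac{\rho+\sqrt{2}}{\sqrt{s}}\|\vv\|_{\omega,1}+\tau\|\A\vv\|_2,
\]
and finally I substitute the bound on $\|\vv\|_{\omega,1}$ already proved in \eqref{l1errorbound}, collecting constants into explicit $C_1=C_1(\rho)$ and $C_2=C_2(\rho,\tau)$ to obtain \eqref{l2errorbound}.

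\textbf{Main obstacle.} The only genuinely new ingredient compared with the unweighted argument is the second step: a straightforward splitting of $\vv_{T^c}$ into ``weighted blocks of size $s$'' is awkward because sets do not have integer weighted cardinality. Using the quasi-best $s$-term approximation together with the weighted Stechkin bound circumvents this and is precisely where the hypothesis $s\geq 2\|\w\|_\infty^2$ enters, ensuring $(s-\|\w\|_\infty^2)^{-1/2}\leq \sqrt{2/s}$ and hence a clean $1/\sqrt{s}$ scaling in the final estimate.
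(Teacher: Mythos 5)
Your proposal is correct and follows essentially the same route as the paper: the same triangle-inequality/NSP bootstrap for the weighted $\ell_1$ bound, and for the $\ell_2$ bound the same decomposition of $\vv=\z-\x$ along the support of its weighted quasi-best $s$-term approximation, with the tail controlled by the weighted Stechkin estimate of Theorem~\ref{thm:weighted:Stechkin} (where $s\geq 2\|\w\|_\infty^2$ enters exactly as you describe) and the head by the robust NSP, followed by substitution of \eqref{l1errorbound}. The only differences are cosmetic (you relax $\|\vv_{T^c}\|_{\omega,1}$ to $\|\vv\|_{\omega,1}$ before applying the NSP, yielding slightly different but equally valid constants).
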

\begin{proof} We start with the proof of \eqref{l1errorbound}.
Let $S$ with $\omega(S) \leq s$ {be} such that $\sigma_{s}(\x)_{\omega,1} = \|\x-\x_S\|_{\omega,1} = \|\x_{S^c}\|_{\omega,1}$.
The triangle inequality gives 
\begin{align}
& \|\x\|_{\omega,1} + \|(\x-\z)_{S^c}\|_{\omega,1}  \leq \|\x_{S^c}\|_{\omega,1} + \|\x_{S}\|_{\omega,1} + \|\x_{S^c}\|_{\omega,1} + 
\|\z_{S^c}\|_{\omega,1}\notag\\
&\leq 2 \|\x_{S^c}\|_{\omega,1} + \|(\x-\z)_S\|_{\omega,1} + \|\z_S\|_{\omega,1} + \|\z_{S^c}\|_{\omega,1}
= 2 \sigma_s(\x)_{\omega,1} + \|(\x-\z)_S\|_{\omega,1} + \|\z\|_{\omega,1}.\notag
\end{align}
Rearranging and setting $\vv := \z-\x$ leads to
\begin{equation}\label{aux:NSP}
\|\vv_{S^c}\|_{\omega,1} \leq \|\z\|_{\omega,1} - \|\x\|_{\omega,1} + \|\vv_S\|_{\omega,1} + 2\sigma_s(\x)_{\omega,1}.
\end{equation}
The Cauchy-Schwarz inequality implies 
\[
\| \vv_S  \|_{\omega,1} = \sum_{j \in S} |v_j| \omega_j \leq \sqrt{\sum_{j \in S} |v_j|^2}\sqrt{\sum_{j \in S} \omega_j^2} = \sqrt{\omega(S)} \| \vv_S \|_2 \leq \sqrt{s} \| \vv_S\|_2,
\]
and therefore by \eqref{robustNSP} 
\begin{equation}\label{l1NSP}
\| \vv_S \|_{\omega,1} \leq \sqrt{s} \|\vv_S\|_2  \leq \rho \| \vv_{S^c} \|_{\omega,1} + \tau \sqrt{s} \| \A \vv \|_2. 
\end{equation}
We combine with \eqref{aux:NSP} 
to arrive at
\[
\|\vv_{S^c}\|_{\omega,1} \leq \frac{1}{1-\rho} \left(\|\z\|_{\omega,1} - \|\x\|_{\omega,1}  + \tau \sqrt{s} \|\A \vv\|_2 + 2\sigma_s(\x)_{\omega,1}\right).
\]
Using \eqref{l1NSP} once more finally gives
\begin{align}
\|\x-\z\|_{\omega,1} &= \|\vv_S\|_{\omega,1} + \|\vv_{S^c}\|_{\omega,1} \leq (1+\rho) \|\vv_{S^c}\|_{\omega,1} + \tau \sqrt{s} \|\A \vv\|_2\notag\\
& \leq \frac{1+\rho}{1-\rho}\left(\|\z\|_{\omega,1} - \|\x\|_{\omega,1}  + 2\sigma_s(\x)_{\omega,1} \right) + \frac{2 \tau \sqrt{s}}{1-\rho} \|\A (\x-\z)\|_2.\notag
\end{align}
We pass to the proof of \eqref{l2errorbound}. Let $S$ with $\omega(S) \leq s$ be such that 
$\|\vv - \vv_S\|_2 = \|\vv_{S^c}\|_2 = \widetilde{\sigma}_s(\vv)_{\omega,2}$ (recalling that $\|\cdot\|_2 = \|\cdot\|_{\omega,2}$).
Using the weighted Stechkin estimate \eqref{weighted:Stechkin} 
and the robust null space property \eqref{robustNSP} as well as the error bound \eqref{l1errorbound} 
we obtain
\begin{align}
\|\x-\z\|_2 & \leq \|(\x-\z)_{S^c}\|_2 + \|(\x-\z)_{S}\|_2 \notag\\
&\leq \frac{1}{\sqrt{s-\|\w\|_\infty^2}} \|\x-\z\|_{\omega,1} 
+ \frac{\rho}{\sqrt{s}} \|(\x-\z)_{S^c}\|_{\omega,1} + \tau \|\A (\x-\z)\|_2\notag\\
&  \leq \frac{1+ \rho}{\sqrt{s-\|\w\|_\infty^2}} \|\x-\z\|_{\omega,1} + \tau \|\A (\x-\z)\|_2\notag\\
& \leq \frac{2(1+\rho)^2}{(1-\rho)\sqrt{s-\|\w\|_\infty^2}}\left(\|\z\|_{\omega,1} - \|\x\|_{\omega,1} + \sigma_s(\x)_{\omega,1}\right)\notag\\
& + \left(\tau + \frac{2 \tau(1+\rho)\sqrt{s}}{(1-\rho)\sqrt{s-\|\w\|_\infty^2}}\right) \|\A(\x-\z)\|_2.\notag
\end{align}
Since $s \geq 2 \|\w\|_\infty^2$ the statement follows with $C_1 = 2\sqrt{2}(1+\rho)^2/(1-\rho)$ and $C_2 = \tau + 2\sqrt{2}\tau(1+\rho)/(1-\rho)$.
\end{proof}
%
As a consequence of the previous result we obtain error bounds for sparse recovery via weighted $\ell_1$ minimization.
\begin{corollary} 
\label{cor:nsp}
Let $\A \in \C^{m \times N}$ satisfy the weighted robust null space property of order $s$ and constants $\rho \in (0,1)$ and $\tau > 0$. For $\x \in \C^N$ and $\y = \A \x + \vct{e}$ with
$\|\vct{e}\|_2 \leq \eta$, let $\x^\sharp$ be the solution of 
\[
\min_{\z \in \C^\N} \|\z\|_1 \mbox{ subject to } \|\A \z - \y\|_2 \leq \eta.
\]
Then the reconstruction error satisfies
\begin{align}
\|\x-\x^\sharp\|_{\omega,1} & \leq c_1 \sigma_{s}(\x)_{\omega,1} + d_1 \sqrt{s} \eta \label{l1error}\\
\|\x-\x^\sharp\|_2 & \leq c_2 \frac{\sigma_s(\x)_{\omega,1}}{\sqrt{s}} + d_2 \eta,\label{l2error}
\end{align}
where the second bound additionally assumes $s \geq 2 \| \w\|_\infty^2$.
The constants $c_1,c_2,d_1,d_2 > 0$ depend only on $\rho$ and $\tau$.
\end{corollary}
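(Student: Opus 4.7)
The plan is to deduce this corollary directly from Theorem~\ref{thmNSP}, applied with the choice $\z = \x^\sharp$. Two simple observations feed the machine: first, because $\|\A\x - \y\|_2 = \|\vct e\|_2 \leq \eta$, the original signal $\x$ is feasible for the weighted $\ell_1$ program, so optimality of $\x^\sharp$ gives
\[
\|\x^\sharp\|_{\omega,1} - \|\x\|_{\omega,1} \leq 0.
\]
Second, the triangle inequality together with the constraint $\|\A\x^\sharp - \y\|_2 \leq \eta$ yields
\[
\|\A(\x^\sharp - \x)\|_2 \leq \|\A\x^\sharp - \y\|_2 + \|\y - \A\x\|_2 \leq 2\eta.
\]

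With these two observations in hand, I would simply substitute into the two inequalities of Theorem~\ref{thmNSP}. Inequality \eqref{l1errorbound} applied with $\z = \x^\sharp$ loses the term $\|\x^\sharp\|_{\omega,1} - \|\x\|_{\omega,1}$ (it is nonpositive) and turns the last term into something proportional to $\sqrt{s}\,\eta$, yielding
\[
\|\x - \x^\sharp\|_{\omega,1} \leq \frac{2(1+\rho)}{1-\rho}\,\sigma_s(\x)_{\omega,1} + \frac{4\tau\sqrt{s}}{1-\rho}\,\eta,
\]
which is exactly the bound \eqref{l1error} with $c_1 = 2(1+\rho)/(1-\rho)$ and $d_1 = 4\tau/(1-\rho)$. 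Under the additional assumption $s \geq 2\|\w\|_\infty^2$, the same substitution into \eqref{l2errorbound} produces \eqref{l2error} with constants $c_2, d_2$ depending only on $\rho$ and $\tau$, since $C_1, C_2$ in Theorem~\ref{thmNSP} already depend only on these parameters.

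There is essentially no obstacle here: the corollary is a clean packaging step that converts the two-vector inequalities of Theorem~\ref{thmNSP} into a one-vector recovery guarantee by exploiting the feasibility of $\x$ and the constraint satisfied by $\x^\sharp$. The only tiny bookkeeping point is making sure the factor of $2$ from the noise triangle inequality gets absorbed correctly into $d_1$ and $d_2$, and confirming that the hypothesis $s \geq 2\|\w\|_\infty^2$ is invoked only for the $\ell_2$ bound, matching the corresponding restriction in Theorem~\ref{thmNSP}.
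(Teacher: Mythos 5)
Your proposal is correct and matches the paper's own proof essentially verbatim: the paper also applies Theorem~\ref{thmNSP} with $\z = \x^\sharp$, uses feasibility of $\x$ to conclude $\|\x^\sharp\|_{\omega,1} - \|\x\|_{\omega,1} \leq 0$, and bounds $\|\A(\x-\x^\sharp)\|_2 \leq 2\eta$ by the triangle inequality. Your explicit tracking of the constants $c_1, d_1$ is a fine bonus but nothing differs in substance.
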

\begin{proof} The reconstruction errors follow from the error bounds in Theorem~\ref{thmNSP} with $\z = \x^{\#}$, noting that $\| \x^{\#} \|_{\omega,1} - \| \x \|_{\omega,1} \leq 0$ and $\| \A(\x - \x^{\#}) \|_2 \leq \| \A \x - \y \|_2 + \| \A \x^{\#} - \y \|_2 \leq 2 \eta$. 

\end{proof}

\begin{remark} \emph{In the case of noiseless measurements, the previous result gives error bounds for equality-constrained weighted $\ell_1$ minimization
by setting $\eta = 0$. Moreover, with a similar technique as used for the previous result, one can generalize \eqref{l1error} and \eqref{l2error}
to error bounds in weighted $\ell_{\omega,p}$ for $1 \leq p \leq 2$, see \cite{fora13} for the unweighted case.}
\end{remark}

\subsection{Weighted restricted isometry property}

It is often unclear how to show the weighted null space property directly for a given matrix. 
In the unweighted case, it therefore has become useful to work instead with the restricted isometry property, which implies the null space property.
As introduced in Definition \ref{def:weighted:RIP}, we define the weighted restricted isometry ($\w$-RIP) constant $\delta_{\omega,s}$ associated to a matrix $\A$ 
as the smallest number such that
\[
(1-\delta_{\omega,s}) \|\x\|_{2}^2 \leq \| \A \x \|_2^2 \leq (1+\delta_{\omega,s}) \| \x \|_2^2 \quad \mbox{ for all } \x \mbox{ with } \|\x\|_{\omega,0} \leq s.
\]
We say that $\A$ satisfies a weighted restricted isometry property ($\w$-RIP) if $\delta_{\omega,s}$ is small for $s$ relatively large compared to $m$. 
 The $\w$-RIP implies the weighted robust null space property and therefore the error bounds \eqref{l1error} and \eqref{l2error} 
 for recovery via weighted $\ell_1$ minimization as shown in the following result.
 
\begin{theorem} 
 \label{ripnsp}
 Let $\A \in \C^{m \times N}$ with 
 $\w$-RIP constant
 \begin{equation}\label{cond:RIP}
 \delta_{\omega,3s} < 1/3
 \end{equation}
 for $s \geq 2 \|\w \|_\infty^2$.
Then $\A$ satisfies the weighted robust null space property of order $s$ with constants $ \rho = 2 \delta_{\omega,3s}/(1-\delta_{\omega,3s}) < 1$ 
and $\tau =  \sqrt{1+\delta_{\omega,3s}}/(1-\delta_{\omega,3s})$.
 \end{theorem}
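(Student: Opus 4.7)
The plan is to adapt the standard ``RIP implies robust NSP'' argument (as in \cite{fora13} for the unweighted case) to the weighted setting. Fix $\vv \in \C^N$ and $S$ with $\omega(S) \leq s$. I will decompose the complement $S^c$ into blocks and exploit the $\w$-RIP together with the standard polarization-type inequality: if $\x,\y \in \C^N$ have disjoint supports and $\omega(\supp(\x) \cup \supp(\y)) \leq t$, then $|\langle \A\x,\A\y\rangle| \leq \delta_{\omega,t} \|\x\|_2 \|\y\|_2$. (This polarization estimate is proved exactly as in the unweighted case; the disjoint supports ensure $\|\x \pm \y\|_{\omega,0} = \omega(\supp(\x)) + \omega(\supp(\y))$.)

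The key construction is the weighted block partition. Order the indices of $S^c$ so that $|v_j|\omega_j^{-1}$ is non-increasing, and greedily form consecutive blocks $S_1,S_2,\ldots$ by adding indices to the current block until adding one more would push its weighted cardinality above $s$. By construction $\omega(S_k) \leq s$ for every $k$, and for every non-final block $s - \|\w\|_\infty^2 \leq \omega(S_k) \leq s$; the hypothesis $s \geq 2\|\w\|_\infty^2$ therefore gives $\omega(S_k) \geq s/2$ for all non-final blocks. The first block $S_1$ is merged with $S$ to form the ``main'' set $S \cup S_1$ of weighted cardinality $\leq 2s$.

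Next I apply $\w$-RIP (of order $2s \leq 3s$) to $\vv_{S \cup S_1}$ and expand
\[
(1-\delta_{\omega,3s})\|\vv_{S\cup S_1}\|_2^2 \;\leq\; \|\A\vv_{S\cup S_1}\|_2^2 \;=\; \langle \A\vv_{S\cup S_1},\A\vv\rangle \;-\; \sum_{k \geq 2}\langle \A\vv_{S\cup S_1},\A\vv_{S_k}\rangle.
\]
For each $k \geq 2$, $\omega((S\cup S_1) \cup S_k) \leq 2s + s = 3s$, so the polarization estimate yields $|\langle \A\vv_{S\cup S_1},\A\vv_{S_k}\rangle| \leq \delta_{\omega,3s}\|\vv_{S\cup S_1}\|_2\|\vv_{S_k}\|_2$. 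Dividing by $\|\vv_{S\cup S_1}\|_2$ and using $\|\A\vv_{S\cup S_1}\|_2 \leq \sqrt{1+\delta_{\omega,3s}}\|\vv_{S\cup S_1}\|_2$ gives
\[
\|\vv_S\|_2 \;\leq\; \|\vv_{S \cup S_1}\|_2 \;\leq\; \frac{\sqrt{1+\delta_{\omega,3s}}}{1-\delta_{\omega,3s}}\|\A\vv\|_2 \;+\; \frac{\delta_{\omega,3s}}{1-\delta_{\omega,3s}}\sum_{k\geq 2}\|\vv_{S_k}\|_2.
\]

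It remains to control $\sum_{k\geq 2}\|\vv_{S_k}\|_2$ by $\|\vv_{S^c}\|_{\omega,1}/\sqrt{s}$; this is where the weighted sorting pays off. For $j \in S_k$ with $k \geq 2$, the ordering gives $|v_j|\omega_j^{-1} \leq \min_{i \in S_{k-1}}|v_i|\omega_i^{-1}$, and the latter is bounded by the weighted average $\|\vv_{S_{k-1}}\|_{\omega,1}/\omega(S_{k-1})$. Squaring and summing yields $\|\vv_{S_k}\|_2^2 \leq \omega(S_k)\|\vv_{S_{k-1}}\|_{\omega,1}^2 / \omega(S_{k-1})^2$. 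Inserting the bounds $\omega(S_k) \leq s$ and $\omega(S_{k-1}) \geq s/2$ (valid since $S_{k-1}$ is non-final when $k \geq 2$ exists) produces $\|\vv_{S_k}\|_2 \leq (2/\sqrt{s})\|\vv_{S_{k-1}}\|_{\omega,1}$, hence $\sum_{k \geq 2}\|\vv_{S_k}\|_2 \leq (2/\sqrt{s})\|\vv_{S^c}\|_{\omega,1}$. Substituting back gives the weighted robust null space property with the stated $\rho$ and $\tau$. The main technical hurdle is the block construction: one must verify that the greedy weighted grouping yields blocks with $\omega(S_k)$ bounded both above (to keep the RIP order at $3s$) and below by a constant multiple of $s$ (to get geometric decay); the assumption $s \geq 2\|\w\|_\infty^2$ is precisely what makes both constraints compatible.
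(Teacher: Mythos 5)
Your proposal is correct and follows essentially the same route as the paper's proof: the same weighted block decomposition of $S^c$ ordered by $|v_j|\omega_j^{-1}$, the same polarization estimate for disjointly supported vectors (the paper's Lemma~\ref{lem:RIP:orth}), the same expansion of $\|\A\vv_{S\cup S_1}\|_2^2$ against $\A\vv$ minus the tail blocks, and the same weighted-averaging step giving $\|\vv_{S_k}\|_2 \leq (2/\sqrt{s})\|\vv_{S_{k-1}}\|_{\omega,1}$ from $s \geq 2\|\w\|_\infty^2$. The constants $\rho$ and $\tau$ come out identically.
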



Before proving Theorem~\ref{ripnsp}, we make the following observations.
As in the unweighted case (see e.g.\ \cite{fora13,ra10}) the $\w$-RIP constants can be rewritten as
\[
\delta_{\omega,s} = \max_{S \subset [N], \omega(S) \leq s} \| \A_S^* \A_S - \text{Id} \|_{2 \to 2},
\] 
{where $\A_S$ denotes the submatrix of $\A$ restricted to the columns indexed by $S$.}
\begin{lemma}\label{lem:RIP:orth} If $\vct{u}, \vct{v} \in \C^{N}$ are such that $\|\vct{u}\|_{\omega,0} \leq s, \|\vct{v}\|_{\omega,0} \leq t$ and $\supp \vct{u} \cap \supp \vct{v} = \emptyset$ then
\[
|\langle \A \vct{u}, \A \vct{v} \rangle| \leq \delta_{\omega,s+t} \|\vct{u}\|_2 \|\vct{v}\|_2.
\]
\end{lemma}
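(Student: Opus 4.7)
The plan is to mimic the classical argument that shows how RIP controls inner products of $\A$ applied to vectors supported on disjoint sets, checking that everything passes through with $\|\cdot\|_{\omega,0}$ in place of the ordinary support size. The only structural fact needed about the weighted $\ell_0$-functional is that it is additive on disjoint supports: if $\supp\vct u \cap \supp\vct v = \emptyset$, then
\[
\|\vct u \pm \vct v\|_{\omega,0} = \sum_{j \in \supp\vct u} \omega_j^2 + \sum_{j \in \supp\vct v} \omega_j^2 = \|\vct u\|_{\omega,0}+\|\vct v\|_{\omega,0} \leq s+t,
\]
which is immediate from the definition. Hence vectors of the form $\vct u \pm \vct v$ are admissible for the $\w$-RIP constant $\delta_{\omega,s+t}$.

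I would first reduce to the case $\|\vct u\|_2 = \|\vct v\|_2 = 1$ by homogeneity, and then to the case that $\langle \A\vct u, \A\vct v\rangle$ is real and nonnegative by replacing $\vct v$ with $e^{-i\theta}\vct v$ for an appropriate phase $\theta$; this does not change the support of $\vct v$ nor its $\ell_2$ norm, so the hypotheses are preserved. Under this reduction it suffices to bound $\mathrm{Re}\,\langle\A\vct u,\A\vct v\rangle$.

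Next I would apply the real polarization identity
\[
\mathrm{Re}\,\langle \A\vct u, \A\vct v\rangle = \tfrac14\bigl(\|\A(\vct u+\vct v)\|_2^2 - \|\A(\vct u-\vct v)\|_2^2\bigr)
\]
and invoke the $\w$-RIP on both terms, using $\|\vct u \pm \vct v\|_{\omega,0} \leq s+t$ established above. Since the supports are disjoint, $\langle\vct u,\vct v\rangle = 0$, so $\|\vct u + \vct v\|_2^2 = \|\vct u - \vct v\|_2^2 = \|\vct u\|_2^2 + \|\vct v\|_2^2 = 2$. Combining the two RIP bounds then yields
\[
\mathrm{Re}\,\langle \A\vct u, \A\vct v\rangle \leq \tfrac14\bigl((1+\delta_{\omega,s+t}) - (1-\delta_{\omega,s+t})\bigr)\cdot 2 = \delta_{\omega,s+t},
\]
and undoing the normalization and phase rotation gives the claim.

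There is essentially no obstacle beyond checking the additivity of $\|\cdot\|_{\omega,0}$ on disjoint supports; once that is in hand, the proof is a verbatim transcription of the standard unweighted argument (as in \cite{fora13,ra10}). The one small subtlety worth mentioning is the phase trick for the complex case, which is why I would explicitly reduce to a nonnegative real inner product before polarizing, rather than attempting a direct complex polarization identity.
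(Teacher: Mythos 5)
Your proof is correct, but it takes a genuinely different route from the paper's. The paper first records (just before the lemma) the operator-norm reformulation $\delta_{\omega,s} = \max_{S:\,\omega(S)\le s}\|\A_S^*\A_S-\Id\|_{2\to2}$, and then proves the lemma in two lines: setting $S=\supp\vct u\cup\supp\vct v$ with $\omega(S)\le s+t$ and using $\langle\vct u,\vct v\rangle=0$, it writes $\langle\A\vct u,\A\vct v\rangle=\langle(\A_S^*\A_S-\Id)\vct u_S,\vct v_S\rangle$ and applies Cauchy--Schwarz. You instead work directly from the defining inequalities \eqref{def:wRIP}, via the phase reduction and the polarization identity for $\|\A(\vct u\pm\vct v)\|_2^2$; the key observation in both arguments is the same, namely that disjointness of supports gives $\|\vct u\pm\vct v\|_{\omega,0}\le s+t$ (and, in your case, $\|\vct u\pm\vct v\|_2^2=2$ after normalization; in the paper's case, $\langle\vct u_S,\vct v_S\rangle=0$). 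What each approach buys: the paper's is shorter but leans on the operator-norm characterization, which it asserts without proof "as in the unweighted case"; yours avoids that reformulation entirely and uses only Definition \ref{def:weighted:RIP}, at the modest cost of the explicit phase-rotation step needed to make the real polarization identity applicable over $\C$. Both are standard adaptations of the classical unweighted argument, and your verification that the weighted $\ell_0$-functional is additive on disjoint supports is exactly the point that needs checking.
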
 
\begin{proof} Let $S = \supp \vct{u} \cup \supp \vct{v}$ so that $\omega(S) \leq s+t$. Since $\langle \vct{u}, \vct{v}\rangle = 0$ we have
\begin{align}
|\langle \A \vct{u}, \A \vct{v} \rangle| &= |\langle \A_S \vct{u}_S, \A_{S} \vct{v}_S\rangle - \langle \vct{u}_S,\vct{v}_S \rangle |
= |\langle (\A_S^* \A_S - \Id) \vct{u}_S, \vct{v}_S\rangle|\notag\\ 
&\leq \|\A^*_{S} \A_S - \Id \|_{2 \to 2} \|\vct{u}_S\|_2 \|\vct{v}_S\|_2 \leq \delta_{\omega,s+t} \|\vct{u}\|_2 \|\vct{v}\|_2.\notag
\end{align}
This completes the proof.
\end{proof}
Now we are prepared for the proof of the main result of this section.
\begin{proof}[Proof of Theorem~\ref{ripnsp}.] Let $\vct{v} \in \C^N$ and $S\subset [N] $ with $\omega(S) \leq s$. 
We partition $S^c$ into blocks $S_1,S_2,\hdots$ with $s - \| \w\|_{\infty}^2 \leq \omega(S_\ell) \leq s$ according
to the nonincreasing rearrangement of $v_{S^c} \cdot \omega^{-1}_{S^c}$, that is, $|v_j| \omega_j^{-1} \leq |v_k| \omega_k^{-1}$ for all $j \in S_\ell$ and all $k \in S_{\ell-1}$, $\ell \geq 2$.
Then we estimate
\begin{align}
\|\vct{v}_{S} + \vct{v}_{S_1}\|_2^2 & \leq \frac{1}{1-\delta_{\omega,2s}} \|\A(\vct{v}_S + \vct{v}_{S_1})\|_2^2 =
\frac{1}{1-\delta_{\omega,2s}} \left\langle \A (\vct{v}_S + \vct{v}_{S_1}), \A \vct{v} - \sum_{\ell \geq 2} \A \vct{v}_{S_\ell} \right\rangle\notag\\
& = \frac{1}{1-\delta_{\omega,2s}} \left( \left\langle \A (\vct{v}_S + \vct{v}_{S_1}), \A \vct{v}\right\rangle - \sum_{\ell \geq 2} 
\left\langle \A (\vct{v}_S + \vct{v}_{S_1}), \A \vct{v}_{S_\ell}\right\rangle\right)\notag\\
& \leq \frac{1}{1-\delta_{\omega,2s}} \left(\sqrt{1+\delta_{\omega,2s}} \|\vct{v}_S + \vct{v}_{S_1}\|_2  \|\A \vct{v}\|_2+ \delta_{\omega,3s}\|\vct{v}_S + \vct{v}_{S_1}\|_2 \sum_{\ell \geq 2}  \|\vct{v}_{S_\ell}\|_2 \right),\notag
\end{align}
where we have used Lemma \ref{lem:RIP:orth} in the third line.
Dividing by $\|\vct{v}_{S} + \vct{v}_{S_1}\|_2$ and using the fact that $\delta_{\omega,2s} \leq \delta_{\omega, 3s}$ we arrive at 
\[
\|\vct{v}_S\|_2 \leq \|\vct{v}_{S} + \vct{v}_{S_1}\|_2 \leq  \frac{\delta_{\omega,3s}}{1-\delta_{\omega,3s}} \sum_{\ell \geq 2} \|\vct{v}_{S_\ell}\|_2 +  \frac{\sqrt{1+\delta_{\omega,3s}}}{1-\delta_{\omega,3s}} \|\A \vct{v}\|_2.
\]
Now for $k \in S_\ell$, set $\alpha_k = (\sum_{j \in S_\ell} \omega_j^2)^{-1} \omega_k^2 \leq (s-\|\w\|_\infty^2)^{-1} \omega_k^2$. Then $\sum_{k \in S_{\ell}} \alpha_k = 1$ and
$|v_j| \omega_j^{-1} \leq \sum_{k \in S_{\ell-1}} \alpha_k |v_k| \omega_k^{-1} \leq (s - \|\w\|_\infty^2)^{-1} \sum_{k \in S_{\ell-1}} |v_k| \omega_k$ 
for all $j \in S_\ell$, $\ell \geq 2$, by our construction of the partitioning. By the Cauchy-Schwarz inequality and since $s \geq 2\|\w\|_\infty^2$ this gives
\[
\|\vct{v}_{S_\ell}\|_2 \leq \frac{\sqrt{s}}{s-\|\w \|_2^2} \|\vct{v}_{S_{\ell-1}}\|_{\omega,1} \leq \frac{2}{\sqrt{s}} \|\vct{v}_{S_{\ell-1}}\|_{\omega,1}.
\]
Therefore,
\begin{align}
\|\vct{v}_S\|_2 & \leq \frac{2 \delta_{\omega,3s}}{(1-\delta_{\omega,3s})\sqrt{s}} \sum_{\ell\geq 1} \|\vct{v}_{S_\ell}\|_{\omega,1} + \frac{\sqrt{1+\delta_{\omega,3s}}}{1-\delta_{\omega,3s}} \|\A \vct{v}\|_2\notag\\
& \leq \frac{2 \delta_{\omega,3s}}{(1-\delta_{\omega,3s})\sqrt{s}} \|\vct{v}_{S^c}\|_{\omega,1} +  \frac{\sqrt{1+\delta_{\omega,3s}}}{1-\delta_{\omega,3s}} \|\A \vct{v}\|_2.\notag
\end{align}
This yields the desired estimate with $\tau = \sqrt{1+\delta_{\omega,3s}}/(1-\delta_{\omega,3s})$ and $\rho = 2 \delta_{\omega,3s}/(1-\delta_{\omega,3s})$ which is strictly smaller than
$1$ if $\delta_{\omega,3s} < 1/3$.
\end{proof}
We remark that we did not attempt to provide the optimal constant in \eqref{cond:RIP}. Improvements can be achieved by pursuing more complicated arguments, see e.g.~\cite{fora13}. 
Also, conditions involving $\delta_{\omega,2s}$ instead of $\delta_{\omega,3s}$ are possible.
 
\section{Weighted RIP estimates for orthonormal systems}

In this section, we provide a quite general class of structured random matrices which satisfy the $\w$-RIP.
  Precisely,  the main theorem of this section is that matrices arising from orthonormal systems satisfy the $\w$-RIP as long as the weights grow at least as quickly as the $L_{\infty}$ norms of the functions they correspond to.  This extends existing unweighted RIP results for finite \emph{bounded} orthonormal systems $(\psi_j)_{j \in \Lambda}$ such that $\sup_{j \in \Lambda} \|\psi_j\|_\infty \leq K$ for some constant $K \geq 1$.  For bounded orthonormal systems, the following unweighted RIP estimates have been shown. 

\begin{proposition}[Theorems 4.4 and 8.4, \cite{ra10}]\label{prop:classical}
\label{thm:BOS}
Fix parameters $\delta, \gamma \in (0,1)$.  Let $(\psi_j)_{j \in \Lambda}$ be a bounded orthonormal system 
with uniform bound $K$.
Suppose
\begin{align}
m &\geq CK^2  \delta^{-2} s \log^2(s) \log(m) \log(N), \nonumber \\
m &\geq DK^2 \delta^{-2} s \log(1/\gamma),
\end{align}
where $N = |\Lambda|$.
Assume that $t_1, t_2, \dots, t_m$ are drawn independently from the orthogonalization measure $\nu$
associated to the orthonormal system.  
Then  with probability exceeding $1 - \gamma$, the normalized sampling matrix $\tilde{\A} \in \C^{m \times N}$ with entries 
$\tilde{A}_{\ell,k} = \frac{1}{\sqrt{m}}\psi_k(t_\ell), \quad \ell \in [m], k \in [N],$
satisfies the restricted isometry property of order $s$, that is, $\delta_{s} \leq \delta$. 
\end{proposition}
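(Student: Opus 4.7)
The statement is the standard RIP bound for sampling matrices associated with bounded orthonormal systems; I would follow the Rudelson--Vershynin strategy via symmetrization, chaining, and Talagrand concentration. Let me sketch the plan.

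First, I would rewrite the RIP constant as the supremum of an empirical process. Setting $\vct{\psi}(t) = (\psi_1(t), \ldots, \psi_N(t))^T$ and
$$D_{s,N} = \{\x \in \C^N : |\supp(\x)| \leq s, \|\x\|_2 \leq 1\},$$
and noting that the orthonormality gives $\E[\vct{\psi}(t)\vct{\psi}(t)^*] = \Id$, one has
$$\delta_s = \sup_{\x \in D_{s,N}} \left| \frac{1}{m} \sum_{\ell=1}^m |\langle \vct{\psi}(t_\ell), \x\rangle|^2 - \|\x\|_2^2 \right|.$$
So the problem becomes controlling the supremum of an empirical process indexed by the sparse unit ball.

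Next, I would bound $\E \delta_s$. Standard symmetrization introduces Rademacher random variables $\epsilon_\ell$ and yields
$$\E \delta_s \leq \frac{2}{m} \E \sup_{\x \in D_{s,N}} \left| \sum_{\ell=1}^m \epsilon_\ell |\langle \vct{\psi}(t_\ell), \x\rangle|^2 \right|.$$
Conditioning on $(t_\ell)$, the inner quantity is a Rademacher process that I would estimate by Dudley's entropy integral, with metric
$d(\x,\y)^2 = \sum_\ell (|\langle \vct{\psi}(t_\ell),\x\rangle|^2 - |\langle \vct{\psi}(t_\ell),\y\rangle|^2)^2.$
The key step is the covering number estimate: using the factorization
$|\langle \vct{\psi}(t_\ell),\x\rangle|^2 - |\langle \vct{\psi}(t_\ell),\y\rangle|^2 = \langle \vct{\psi}(t_\ell),\x-\y\rangle\overline{\langle \vct{\psi}(t_\ell),\x+\y\rangle}$, one bounds $d(\x,\y)$ using both an $\ell_2$--type norm (good at small scales) and the $\ell_\infty$ column norm of the sampling matrix (good at large scales, exploiting the uniform bound $K$). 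The hard part is carrying out the dyadic splitting of Dudley's integral with these two complementary bounds, and then uncoupling the resulting estimate from the very quantity $\delta_s$ one is trying to control, yielding a bound of the form
$$\E \delta_s \leq C K \sqrt{\frac{s \log^2(s)\log(m)\log(N)}{m}}\,(1+\sqrt{\delta_s}),$$
which after solving for $\E \delta_s$ gives the claimed scaling $m \asymp K^2 \delta^{-2} s \log^2 s\,\log m\,\log N$ to ensure $\E\delta_s \leq \delta/2$.

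Finally, I would pass from expectation to a high-probability tail bound via Talagrand's concentration inequality for suprema of empirical processes (Bousquet version). The functions $\x \mapsto |\langle \vct{\psi}(t),\x\rangle|^2$ are uniformly bounded by $K^2 s$ on $D_{s,N}$ (using Cauchy--Schwarz and sparsity), and have variance controlled by $1+\delta$. Talagrand's bound then yields a deviation estimate of the form $\P(\delta_s \geq \E\delta_s + u) \leq \exp(-cmu^2/(K^2 s))$ for moderate $u$, from which the sample complexity $m \geq D K^2 \delta^{-2} s \log(1/\gamma)$ suffices to push the failure probability below $\gamma$. The main obstacle throughout is the chaining argument for the covering numbers, since the naive $\ell_2$ volumetric estimate is too crude and one must combine it with an entropy bound derived from the coordinate structure of the sampling matrix.
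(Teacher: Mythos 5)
Your outline is correct and is essentially the argument the paper relies on: this proposition is imported from \cite{ra10} rather than proved here, but the paper's own proof of its weighted generalization (Theorem \ref{mainthm}) follows exactly your template of rewriting $\delta_s$ as the supremum of an empirical process, symmetrizing, bounding the Dudley integral by combining a volumetric covering estimate at small scales with a Maurey-type estimate exploiting the uniform bound $K$ at large scales, solving the resulting self-bounding inequality for $\E\delta_s$, and finishing with Bernstein/Talagrand concentration using the pointwise bound $|\langle\vct{\psi}(t),\x\rangle|^2\le K^2 s$. The only cosmetic slips are that for complex scalars $|a|^2-|b|^2=\operatorname{Re}\bigl((a-b)\overline{(a+b)}\bigr)$ (the absolute-value bound you need still holds), and the self-bounding inequality should read $\E\delta_s\le C\sqrt{\cdots}\,\sqrt{1+\E\delta_s}$ with the expectation inside.
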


In fact, we can allow $\|\psi_j\|_\infty$ to depend on $j$ if we ask only for $\w$-RIP with weights $\omega_j =  \|\psi_j\|_\infty$, or more generally, $\omega_j \geq \|\psi_j\|_\infty$.  This is the content of the following theorem.

\vspace{3mm}

\begin{theorem}[$\w$-RIP for orthonormal systems]
\label{mainthm}
Fix parameters $\delta, \gamma \in (0,1)$.  Let $(\psi_j)_{j \in \Lambda}$ be an orthonormal system of finite size $N = | \Lambda |$.
Consider weights satisfying $\omega_j \geq \|\psi_j\|_\infty$. 
Fix
$$
m \geq C \delta^{-2} s \max \{\log^3(s) \log(N), \log(1/\gamma)\}
$$
and suppose that $t_1, t_2, \dots, t_m$ are drawn independently from the orthogonalization measure
associated to the $(\psi_j)$. 
Then  with probability exceeding $1 - \gamma$, the normalized sampling matrix $\tilde{\A} \in \C^{m \times N}$ with entries $
\tilde{A}_{\ell,k} = \frac{1}{\sqrt{m}}\psi_k(t_\ell) $
satisfies the weighted restricted isometry property of order $s$, that is, $\delta_{\omega,s} \leq \delta$. 
\end{theorem}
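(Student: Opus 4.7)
The plan is to follow the standard Rudelson-Vershynin / Rauhut empirical process approach (as in the proof of Proposition~\ref{prop:classical}) but adapted so the weighted $\ell_1$ constraint plays the role that the uniform $L_\infty$ bound $K$ plays in the unweighted case. The key observation driving the whole argument is that for $\x$ with $\|\x\|_{\omega,0} \leq s$, the Cauchy-Schwarz inequality from the introduction gives $\|\x\|_{\omega,1} \leq \sqrt{s}\,\|\x\|_2$, while the atoms $\omega_j^{-1} \vct{e}_j$ of the weighted $\ell_1$ ball satisfy $\|\omega_j^{-1} \psi_j\|_\infty \leq 1$ precisely by the hypothesis $\omega_j \geq \|\psi_j\|_\infty$. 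This will replace the usual pointwise bound $\|\psi_j\|_\infty \leq K$ by a bound on the weighted atomic decomposition.

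First, I would rewrite the $\w$-RIP constant as an empirical process. Letting $T_{\omega,s} := \{\x \in \C^N : \|\x\|_{\omega,0} \leq s, \; \|\x\|_2 \leq 1\}$ and writing $\vct{\psi}(t) = (\psi_j(t))_{j \in \Lambda}$, orthonormality gives $\E |\langle \vct{\psi}(t),\x\rangle|^2 = \|\x\|_2^2$, so
\[
\delta_{\omega,s} \; = \; \sup_{\x \in T_{\omega,s}} \left| \frac{1}{m} \sum_{\ell=1}^m |\langle \vct{\psi}(t_\ell), \x \rangle|^2 \; - \; \E |\langle \vct{\psi}(t),\x\rangle|^2 \right|.
\]
The task is to control this supremum in expectation and in probability. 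Symmetrize with Rademacher variables $\varepsilon_\ell$ in the standard way, and then apply the Rudelson-Vershynin decoupling/contraction trick to reduce the quadratic empirical process to a linear one of the form $\sup_{\x \in T_{\omega,s}} m^{-1}|\sum_\ell \varepsilon_\ell \langle \vct{\psi}(t_\ell),\x\rangle|$, multiplied by the radius $\sup_{\x\in T_{\omega,s}} (m^{-1}\sum_\ell |\langle \vct{\psi}(t_\ell),\x\rangle|^2)^{1/2}$, which one finally re-absorbs via a standard iteration in $\sqrt{\delta_{\omega,s}}$.

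The heart of the proof is a Dudley / generic-chaining bound for this Bernoulli process, which requires covering-number estimates for $T_{\omega,s}$ in the random seminorm $d_{\vct{t}}(\x,\x') = \max_{\ell \leq m} |\langle \vct{\psi}(t_\ell),\x-\x'\rangle|$. Here is where the weighting pays off: since every $\x \in T_{\omega,s}$ obeys $\|\x\|_{\omega,1} \leq \sqrt{s}$, we have $\x \in \sqrt{s}\,\conv\{\pm \omega_j^{-1}\vct{e}_j,\pm i\omega_j^{-1}\vct{e}_j : j \in \Lambda\}$, and each atom satisfies the uniform bound $\sup_\ell |\langle \vct{\psi}(t_\ell), \omega_j^{-1}\vct{e}_j\rangle| \leq \omega_j^{-1}\|\psi_j\|_\infty \leq 1$. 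Maurey's empirical method then produces, for any $u > 0$, a cover of log-cardinality $\mathcal{O}(s u^{-2} \log N)$ at scale $u$ in the metric $d_{\vct{t}}$. For large scales one uses the trivial volumetric/support-enumeration bound, taking a union over the subsets $S$ with $\omega(S) \leq s$ (of which there are at most $N^s$) and an $\varepsilon$-net on the unit $\ell_2$-ball in each such support. Plugging these two covering estimates into Dudley's integral gives the expectation bound
\[
\E \delta_{\omega,s} \; \lesssim \; \sqrt{\frac{s\,\log^3(s)\,\log(N)}{m}} \; + \; \frac{s\,\log^3(s)\,\log(N)}{m}.
\]

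Finally, to upgrade the expectation bound to a high-probability statement, I would invoke Talagrand's / Bousquet's concentration inequality for suprema of empirical processes of uniformly bounded functions (with variance controlled by $\sup_{\x \in T_{\omega,s}}\|\langle \vct{\psi}(\cdot),\x\rangle|^2\|_{L_2}^2 \leq 1$ and uniform bound $\sup_{\x \in T_{\omega,s}} \|\langle \vct{\psi}(\cdot),\x\rangle\|_\infty^2 \leq \|\x\|_{\omega,1}^2 \leq s$), which contributes the $s\log(1/\gamma)/m$ tail term and hence the hypothesis $m \gtrsim \delta^{-2} s\log(1/\gamma)$. Choosing $u$ and $m$ so that both error contributions are $\leq \delta$ yields the stated sample-complexity condition. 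The main obstacle, and the step requiring the most care, is the covering-number computation via Maurey in the weighted setting: one must verify that the atoms $\omega_j^{-1}\vct{e}_j$ yield atoms of the correct uniform size in $d_{\vct{t}}$, and simultaneously that the factor $\sqrt{s}$ from the weighted Cauchy-Schwarz inequality (rather than the unweighted $\sqrt{s}$ from cardinality) is the right radius — both of which follow exactly from the matching condition $\omega_j \geq \|\psi_j\|_\infty$.
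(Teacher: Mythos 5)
Your proposal follows essentially the same route as the paper: rewrite $\delta_{\omega,s}$ as the supremum of an empirical process over $T_{\omega,s}$, symmetrize, bound the resulting Rademacher process by Dudley's inequality with two covering-number estimates (a support-enumeration/volumetric bound at small scales and a Maurey empirical-method bound at large scales, using $T_{\omega,s}\subset\sqrt{2s}\,\conv\{\pm\omega_j^{-1}\vct{e}_j,\pm i\,\omega_j^{-1}\vct{e}_j\}$ with atoms uniformly bounded because $\omega_j\geq\|\psi_j\|_\infty$), re-absorb the radius by completing squares, and finish with Bernstein/Bousquet concentration for the supremum. The identification of the weighted Cauchy--Schwarz bound $\|\x\|_{\omega,1}\leq\sqrt{s}\|\x\|_2$ and the normalized atoms as the replacement for the uniform bound $K$ is exactly the paper's key mechanism.

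One point does not go through as claimed. With the sup-metric $d_{\vct{t}}(\x,\x')=\max_{\ell}|\langle\vct{\psi}(t_\ell),\x-\x'\rangle|$, Maurey's method costs an extra $\sqrt{\log m}$ (from bounding the expected maximum over $\ell\in[m]$ of a Rademacher sum), and the Dudley integral then yields the classical condition $m\gtrsim\delta^{-2}s\log^2(s)\log(m)\log(N)$ of Proposition~\ref{prop:classical}, not the stated $m\gtrsim\delta^{-2}s\log^3(s)\log(N)$. The paper obtains the sharper factor by replacing $q=\infty$ with a H{\"o}lder split at exponents $p=1+1/\log(s)$, $q=1+\log(s)$, working in the seminorm $\|\x\|_{X,q}=(\sum_\ell|\langle\X_\ell,\x\rangle|^{2q})^{1/(2q)}$ and applying Khintchine's inequality, so that the $m^{1/(2q)}$ and $m^{1/(2p)}$ powers cancel exactly and the $\log m$ is traded for a $\log s$ (this is the twist imported from the Cheraghchi--Guruswami--Velingker analysis). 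Your argument therefore proves the theorem only with a slightly worse logarithmic factor; to get the statement as written you need this $L_{2q}$ refinement. A second, cosmetic slip: the variance proxy in the concentration step is $\E|\langle\vct{\psi}(t),\x\rangle|^4\leq s$, not $\leq 1$; this does not affect your conclusion since the uniform bound $s$ already forces $m\gtrsim\delta^{-2}s\log(1/\gamma)$.
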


We remark that if $K = \max_j \| \psi_j \|_{\infty}$ is a constant independent or only mildly dependent on $N$, then Theorem \ref{mainthm} essentially reduces to Proposition \ref{thm:BOS}. 
Note, however, that in the restricted parameter regime of $s \lesssim \log(N)$, the above result gives a slight improvement
over the classical result stated in Proposition \ref{prop:classical} -- generalizing the main result of \cite{chguve12}.  The remainder of this section is reserved for the proof of Theorem \ref{mainthm}.

\begin{proof}[Proof of Theorem~\ref{mainthm}.]
The proof proceeds in a similar manner to those in \cite{ruve08,ra10,fora13}, with some adaptations to account for the weights -- see the application of {Maurey's lemma (Lemma \ref{lem:Maurey})} -- and with a twist from \cite{chguve12}
leading to the slight improvement in the logarithmic factor.  Note that our analysis improves the result of  \cite{chguve12} in terms of the probability estimate.

Introducing the set
\begin{equation}
\label{Ts}
T^{s,N}_{\omega} = \{ \x \in \C^N, \| \x \|_2 \leq 1, \| \x \|_{0,\omega} \leq s \},  
\end{equation}
we can rephrase the weighted isometry constant of $\A$ as
\[
\delta_{\omega,s} = \sup_{\x \in T^{s,N}_{\omega}} | \langle (\A^*\A - \Id) \x, \x \rangle|.
\]
The quantity
\begin{equation}
\label{matrixnorm}
\vertiii{\BB}_s := \text{sup}_{\z \in T^{s,N}_\omega} \left| \scalprod{\BB\z}{\z} \right|
\end{equation}
defines a semi-norm on matrices $\BB \in \C^{N \times N}$, and we can write 
$$
\delta_{\omega,s} = \vertiii{\A^*\A - \Id}_s.
$$
Consider the random variable associated to a column of the adjoint matrix,
\begin{equation}
\label{zl}
\X_{\ell} = \Big(\overline{\psi_j(t_{\ell})} \Big)_{j \in \Lambda} .
\end{equation}
By orthonormality of the system $(\psi_j)$ we have  $\E \X_{\ell} \X_{\ell}^* = \Id$, and the {restricted isometry} constant equals
$$
\delta_{\omega,s} = \vertiii{ \frac{1}{m} \sum_{\ell=1}^m \X_{\ell} \X_{\ell}^* - \Id}_s = \frac{1}{m} \vertiii{ \sum_{\ell=1}^m (\X_{\ell} \X_{\ell}^* - \E \X_{\ell} \X_{\ell}^*)}_s.
$$
As a first step we estimate the expectation of $\delta_{\omega,s}$ and later use a concentration result to deduce the probability estimate.
We introduce a Rademacher sequence $\veps = (\epsilon_1,\hdots,\epsilon_m)$, i.e., a sequence of independent Rademacher variables $\epsilon_\ell$ taking the values $+1$ and $-1$ with equal probability, {also independent of the variables $\X_\ell$}.
Symmetrization, see e.g.\ \cite[Lemma 6.3]{leta91} or \cite[Lemma 6.7]{ra10}, yields
\begin{align}
\E \delta_{\omega,s} 
& \leq \frac{2}{m} 
\E \vertiii{ \sum_{\ell=1}^m \epsilon_{\ell} \X_{\ell} \X_{\ell}^*}_s = \frac{2}{m} \E_{\X} \E_{\veps} \sup_{\x \in T_{\omega}^{s,N}} |\langle \sum_{\ell=1}^m \veps_\ell \X_\ell \X_\ell^* \x, \x\rangle| \notag\\
& = \frac{2}{m} \E_{\X} \E_\epsilon \sup_{\x \in T_{\omega}^{s,N}} |\sum_{\ell=1}^m \epsilon_\ell |\langle \X_\ell, \x\rangle|^2 |.\label{Edelta:bound}
\end{align}
Conditional on $(\X_\ell)$, we arrive at a Rademacher (in particular, subgaussian) process indexed by $T_{\omega}^{s,N}$. For a set $T$, a metric $d$
and given $u > 0$, the covering numbers ${\cal N}(T,d,u)$ are defined as the smallest number of balls with respect to $d$ and centered at points of $T$
necessary to cover $T$. For fixed $(\X_\ell)$, we work with the
(pseudo-)metric
\[
d(\x,\z) = \left( \sum_{\ell=1}^m (|\langle \X_\ell, \x\rangle |^2 - |\langle \X_\ell, \z\rangle|^2)^2\right)^{1/2}.
\]
Then Dudley's inequality \cite{li12,leta91,ra10,fora13} implies that 
\[
\E_{\veps} \sup_{\x \in T_{\omega}^{s,N}} |\langle \sum_{\ell=1}^m \epsilon_\ell \X_\ell \X_\ell^* \x, \x\rangle|
\leq 4\sqrt{2} \int_0^{\infty} \sqrt{\log({\cal N}(T_{\omega}^{s,N},d,u))} du.
\]
In order to continue we estimate the metric $d$ using H{\"o}lder's inequality with exponents $p \geq 1$ and $q \geq 1$ satisfying $1/p + 1/q = 1$ to be specified later on. For $\x,\z \in T_{\omega}^{s,N}$, this gives
\begin{align}
d(\x,\z) & =  \left( \sum_{\ell=1}^m (|\langle \X_\ell, \x\rangle | + |\langle \X_\ell, \z\rangle|)^2(|\langle \X_\ell,\x\rangle| - |\langle \X_\ell, \z\rangle|)^2\right)^{1/2}\notag\\
& \leq \left(\sum_{\ell=1}^m (|\langle \X_\ell, \x\rangle| + |\langle \X_\ell, \z\rangle|)^{2p}\right)^{1/(2p)}  \left(\sum_{\ell=1}^m |\langle \X_\ell, \x - \z \rangle|^{2q}\right)^{1/(2q)}\notag\\
& \leq 2 \sup_{\x \in T_\omega^{s,N}} \left(\sum_{\ell=1}^m |\langle \X_\ell, \x\rangle|^{2p}\right)^{1/(2p)} \left(\sum_{\ell=1}^m |\langle \X_\ell, \x - \z \rangle|^{2q}\right)^{1/(2q)}. 
\end{align}
In the standard analysis \cite{ruve08,ra10,fora13}, 
this bound is applied for $p=1$, $q=\infty$. Following \cite{chguve12}, we will achieve a slightly better logarithmic factor 
by working with a {different value of $p$ to be determined later.} 

For any realization of $(\X_\ell)$, we have $|(\X_\ell)_j| \leq \|\psi_j\|_\infty \leq \omega_j$ by assumption. For $\x \in T_{\omega}^{s,N}$
with $S = \supp \x$ we have $\sum_{j \in S} \omega_j^2 \leq s$, resulting in
\begin{equation}\label{bound:inner}
|\langle \X_\ell, \x\rangle| \leq \sum_{j \in S} \omega_j |x_j| \leq (\sum_{j \in S} \omega_j^2)^{1/2} \|\x\|_2 \leq \sqrt{s}.
\end{equation}
This gives
\begin{align}
\sup_{\x \in T_\omega^{s,N}} \left(\sum_{\ell=1}^m |\langle \X_\ell, \x\rangle|^{2p}\right)^{1/(2p)} 
& = \sup_{\x \in T_\omega^{s,N}} \left(\sum_{\ell=1}^m |\langle \X_\ell, \x\rangle|^{2}|\langle \X_\ell, \x\rangle|^{2(p-1)} \right)^{1/(2p)} \notag\\
& \leq s^{(p-1)/(2p)} \left(\sup_{\x \in T_\omega^{s,N}} \sum_{\ell=1}^m |\langle \X_\ell, \x\rangle|^{2} \right)^{1/(2p)}. \notag
\end{align}
Introducing the (semi-)norm
\[
\| \x \|_{X,q} = \left(\sum_{\ell=1}^m |\langle \X_\ell, \x\rangle|^{2q}\right)^{1/(2q)}
\]
and using basic properties of covering numbers, we obtain
\begin{align}
&\E_{\veps} \sup_{\x \in T_{\omega}^{s,N}} |\langle \sum_{\ell=1}^m \epsilon_\ell \X_\ell \X_\ell^* \x, \x\rangle|\notag\\
&\leq C_1 s^{(p-1)/(2p)} \left(\sup_{\x \in T_\omega^{s,N}} \sum_{\ell=1}^m |\langle \X_\ell, \x\rangle|^{2} \right)^{1/(2p)}
\int_0^\infty \sqrt{\log({\cal N}(T_\omega^{s,N},\|\cdot\|_{\X,q},u))} du, \label{bound:Eveps1}
\end{align}
where $C_1$ is a suitable constant.
Next, we estimate the covering numbers appearing above in two different ways.

Let us first derive a bound which is good for small values of $u$. 
It follows from \eqref{bound:inner} that, for $\x \in T_\omega^{s,N}$,
\begin{equation}\label{Xql2:bound}
\| \x \|_{\X,q} \leq \left(\sum_{\ell=1}^m (\sqrt{s} \|\x\|_2)^{2q}\right)^{1/2q} = \sqrt{s} m^{1/(2q)} \|\x\|_2.
\end{equation}
Denoting $B_S$ to be the $\ell_2$ unit ball of vectors with support in $S$ and applying the volumetric covering number bound (see e.g.~\cite[Proposition 10.1]{ra10}) gives
\begin{align}
{\cal N}(T_\omega^{s,N},\|\cdot\|_{\X,q},u) & \leq \sum_{S \subset \Lambda : \omega(S) \leq s} {\cal N}(B_S, \sqrt{s} m^{1/(2q)} \|\cdot\|_2,u)\notag\\
& \leq \left(\begin{matrix} N \\ s \end{matrix}\right) \left(1+ \frac{2 \sqrt{s} m^{1/(2q)}}{u}\right)^{2s} \leq
(eN/s)^s \left(1+ \frac{2 \sqrt{s} m^{1/(2q)}}{u}\right)^{2s}, \notag
\end{align}
where we have applied \cite[Proposition C.3]{fora13} (see also \cite[p.~72]{ra10}) in the last step.

We use Maurey's lemma \cite{ca85} -- see also \cite[Lemma 4.2]{krmera12} for the precise form below -- in order to deduce a covering number bound which is good for larger values of $u$.
Below, $\conv(U)$ denotes the convex hull of a set $U$.

\begin{lemma}\label{lem:Maurey} For a normed space $X$, consider a finite set ${\cal U} \subset X$ of cardinality $N$, and assume that for
every $L \in \N$ and $(\vct{u}_1,\hdots,\vct{u}_L) \in {\cal U}^L$, $\E_{\veps} \|\sum_{j=1}^L \epsilon_j \vct{u}_j \|_X \leq A\sqrt{L}$,
where $\veps$ denotes a Rademacher vector. Then for every $u>0$,
$$
\log {\cal N}({\conv}({\cal U}), \| \cdot \|_X, u) \leq c (A/u)^2 \log N.
$$
The constant $c > 0$ is universal.
\end{lemma}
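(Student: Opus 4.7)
The plan is to use the classical empirical method of Maurey. Given an arbitrary $f \in \conv(\mathcal{U})$, write $f = \sum_{j=1}^N \lambda_j \vct{u}_j$ with $\lambda_j \geq 0$ and $\sum_j \lambda_j = 1$. Interpret the $\lambda_j$ as a probability distribution on the indices and draw i.i.d.\ random vectors $Z_1,\hdots,Z_L$ taking values in $\mathcal{U}$ with $\P(Z_i = \vct{u}_j) = \lambda_j$, so that $\E Z_i = f$. The idea is to approximate $f$ in $X$ by the empirical mean $\frac{1}{L}\sum_{i=1}^L Z_i$, which automatically belongs to the finite set
\[
\mathcal{F}_L := \Big\{ \tfrac{1}{L}\sum_{k=1}^L \vct{v}_k : \vct{v}_k \in \mathcal{U} \Big\}, \qquad |\mathcal{F}_L| \leq N^L.
\]

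The next step is to control the approximation error in expectation by a standard symmetrization argument: introducing an independent Rademacher vector $\veps = (\epsilon_1,\hdots,\epsilon_L)$ and an independent copy $(Z_i')$ of $(Z_i)$, one has
\[
\E \Big\| \tfrac{1}{L} \sum_{i=1}^L Z_i - f \Big\|_X
\leq \tfrac{1}{L} \E \Big\| \sum_{i=1}^L (Z_i - Z_i') \Big\|_X
\leq \tfrac{2}{L} \E \Big\| \sum_{i=1}^L \epsilon_i Z_i \Big\|_X.
\]
Conditioning on the realization $(Z_1,\hdots,Z_L) \in \mathcal{U}^L$ and invoking the hypothesis $\E_\veps \|\sum_{i} \epsilon_i \vct{u}_i\|_X \leq A\sqrt{L}$, which is valid for arbitrary tuples in $\mathcal{U}^L$ (and in particular for tuples with repetitions), the inner expectation is at most $A\sqrt{L}$. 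Hence $\E \|\tfrac{1}{L}\sum_i Z_i - f\|_X \leq 2A/\sqrt{L}$, and there exists at least one deterministic choice of $(\vct{v}_1,\hdots,\vct{v}_L) \in \mathcal{U}^L$ with $\|\tfrac{1}{L}\sum_k \vct{v}_k - f\|_X \leq 2A/\sqrt{L}$.

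To conclude, for a given $u > 0$ choose $L = \lceil (2A/u)^2 \rceil$, so that every $f \in \conv(\mathcal{U})$ lies within distance $u$ of some element of $\mathcal{F}_L$. Therefore $\mathcal{F}_L$ is a $u$-net for $\conv(\mathcal{U})$, which yields
\[
\log \mathcal{N}(\conv(\mathcal{U}), \|\cdot\|_X, u) \leq \log |\mathcal{F}_L| \leq L \log N \leq c (A/u)^2 \log N
\]
for an appropriate absolute constant $c$. The argument is essentially routine; the only point requiring mild care is that the hypothesis is indeed applicable to the random tuple $(Z_1,\ldots,Z_L)$ even when the $Z_i$ repeat, which is ensured by its stated form over all of $\mathcal{U}^L$.
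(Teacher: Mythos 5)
Your proof is correct: this is the classical empirical method of Maurey, and it is exactly the argument behind the result the paper invokes. Note that the paper itself does not prove this lemma -- it is quoted from the literature (Carl's paper and Lemma 4.2 of the reference \emph{krmera12}) -- so there is no in-paper proof to compare against; your symmetrization step and the applicability of the hypothesis to tuples with repetitions are both handled properly. The only cosmetic gap is the regime of large $u$, where $\lceil (2A/u)^2\rceil$ is dominated by the ceiling rather than by $(A/u)^2$; this is harmless because the $L=1$ case of the hypothesis gives $\|\vct{u}_j\|_X \leq A$ for every $j$, so $\conv(\mathcal{U})$ has diameter at most $2A$ and the covering number is $1$ once $u \geq 2A$, while for $u < 2A$ one has $\lceil (2A/u)^2\rceil \leq 8(A/u)^2$.
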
 

To apply this lemma, we first observe that $T_{\omega}^{s,N} \subset \sqrt{2s} \conv(U)$, where 
\[
U = \{ \pm \omega_j^{-1} \vct{e}_j, \pm i \omega_j^{-1} \vct{e}_j, j \in \Lambda\}.
\]
Here, $\vct{e}_j$ denotes the $j$-th canonical unit vector. For a Rademacher vector $\veps = (\epsilon_1,\hdots,\epsilon_L)$, and $\vct{u}_1,\hdots,\vct{u}_L \in U$ we have
\begin{align}
& \E_{\veps} \|\sum_{j=1}^L \epsilon_j \vct{u}_j \|_{X,q}  \leq \left( \E \|\sum_{j=1}^L \epsilon_j \vct{u}_j \|_{X,q}^{2q}\right)^{1/(2q)}
= \left( \E \sum_{\ell=1}^m |\langle \X_\ell, \sum_{j=1}^L \epsilon_j \vct{u}_j\rangle|^{2q} \right)^{1/(2q)}\notag\\
& = \left( \sum_{\ell=1}^m \E  \left| \sum_{j=1}^L \epsilon_j \langle \X_\ell, \vct{u}_j \rangle\right|^{2q} \right)^{1/(2q)} 
 \leq 2 e^{-1/2} \sqrt{2q} \left(\sum_{\ell=1}^m \| {(\langle \X_\ell, \vct{u}_j\rangle)}_{j=1}^L \|_2^{2q}\right)^{1/(2q)}.\notag
\end{align}
In the last step, we have applied Khintchine's inequality, see e.g.~\cite[Corollary 6.9]{ra10}. Using that $|(\X_\ell)_k| \leq \|\psi_k\|_\infty \leq \omega_k$, we have, 
for any vector $\vct{u}_j \in U$, say $\vct{u}_j = \omega_k^{-1} \vct{e}_k$, that
\[
|\langle \X_\ell, \vct{u}_j\rangle| = |\omega_k^{-1} (\X_\ell)_k| \leq 1.
\]
Therefore, $\|{(\langle \X_\ell, \vct{u}_j\rangle)}_{j=1}^L \|_2 \leq \sqrt{L}$ for any $L$ and
\[
 \E_{\veps} \|\sum_{j=1}^L \epsilon_j \vct{u}_j \|_{\X,q}  \leq 2 e^{-1/2} \sqrt{2q} m^{1/(2q)} \sqrt{L}.
\]
An application of Lemma~\ref{lem:Maurey} with $A = 2 e^{-1/2} \sqrt{2q} m^{1/(2q)}$ yields
\[
\sqrt{\log {\cal N}(T_\omega^{s,N},\|\cdot\|_{\X,q},u)} \leq  \sqrt{\log {\cal N}({\conv(U)}, \| \cdot \|_{\X,q}, u/\sqrt{2s} )} \leq C_2 \sqrt{q m^{1/q} s \log(4 N)}\, u^{-1}
\]
with $C_2 = 4 e^{-1/2} \sqrt{c}$.

Observe that it is enough to choose the upper integration bound in the Dudley type integral as $\sqrt{s} m^{1/(2q)}$ because for $u > \sqrt{s} m^{1/(2q)}$ we have
${\cal N}(T_\omega^{s,N},\|\cdot\|_{X,q}, u) = 1$ by \eqref{Xql2:bound}. Splitting then the Dudley integral into two parts and using the appropriate bounds for the covering
numbers, we obtain, for $\kappa \in \big( 0,\sqrt{s}m^{1/(2q)} \big)$,
\begin{align}
&\int_0^\infty \sqrt{\log({\cal N}(T_\omega^{s,N},\|\cdot\|_{\X,q},u))} du\notag\\
& \leq \int_0^\kappa \sqrt{s \log(eN/s) + 2s \log(1+ 2 \sqrt{s}m^{1/(2q)}/u)} du\notag\\
& + C_2 \sqrt{q m^{1/q} s \log(4N)} \int_{\kappa}^{\sqrt{s}m^{1/(2q)}} u^{-1} du \notag\\
& \leq \kappa \sqrt{s \log(eN/s)} + \sqrt{2s} \kappa \sqrt{\log(e(1+\sqrt{s}m^{1/(2q)}/\kappa))} \notag\\
& + C'\sqrt{q m^{1/q} s \log(4N)} \log(\sqrt{s} m^{1/(2q)}/ \kappa). \notag
\end{align}
In the last step, we have applied \cite[Lemma 10.3]{ra10}. Choosing $\kappa = m^{1/(2q)}$ yields
\[
\int_0^\infty \sqrt{\log({\cal N}(T_\omega^{s,N},\|\cdot\|_{\X,q},u))} du \leq C_3 \sqrt{q s m^{1/q} \log(N) \log^2(s)}. 
\]
A combination with \eqref{bound:Eveps1} and \eqref{Edelta:bound} gives
\begin{align}
\E \delta_{\omega,s} & \leq \frac{C_3 s^{(p-1)/(2p)}\sqrt{q m^{1/q} s \log(N) \log^2(s)}}{m}   \E \sup_{\x \in T_{\omega}^{s,N}} \left( \sum_{\ell=1}^m |\langle \X_\ell,x\rangle|^2 \right)^{1/(2p)}\notag\\
&\leq \frac{C_3s^{1/2+(p-1)/(2p)}\sqrt{q \log(N) \log^2(s)}}{m^{1-1/(2q)}m^{-1/(2p)}} \E \left( \frac{1}{m} \vertiii{ \sum_{\ell=1}^m \X_{\ell} \X_{\ell}^* - \Id}_s +  \vertiii{\Id}_s\right)^{1/(2p)} \notag\\
&\leq  \frac{C_3s^{1/2+(p-1)/(2p)}\sqrt{q \log(N) \log^2(s)}}{m^{1/2}} \sqrt{\E \delta_{\omega,s} + 1}.\notag
\end{align}
Hereby, we have applied H{\"o}lder's inequality and used that $1/q+1/p = 1$ as well as $p \geq 1$.
Choosing $p = 1+ 1/\log(s)$ and $q=1+\log(s)$ gives $s^{(p-1)/(2p)} \leq s^{(p-1)/2} = s^{1/(2 \log(s))} = e^{1/2}$ and
\[
\E \delta_{\omega,s} \leq C_4 \sqrt{\frac{s \log(N) \log^3(s)}{m}} \sqrt{\E \delta_{\omega,s} + 1}.
\]
Completing squares finally shows that 
\begin{align}\label{bound:expect}
\E \delta_{\omega,s} \leq C_5 \sqrt{\frac{s \log(N) \log^3(s)}{m}}
\end{align}
provided the term under the square root is bounded by $1$.

\medskip

For the probability bound, we show that $\delta_{\omega,s}$ does not deviate much from its expectation. 
By \eqref{bound:expect}, $\E \delta_{\omega,s} \leq \delta/2$ for some $\delta \in (0,1)$ if
\begin{align}\label{m:bound:expect}
m \geq C_6 \delta^{-2} s \log^3(s) \log(N)
\end{align}
with $C_6 = 4 C_5^2$. Similarly to \cite[Section 8.6]{ra10} we write
\[
\delta_{\omega,s} = \frac{1}{m} \sup_{(\z,\vct{w}) \in Q_{\omega,*}^{s,N}} \operatorname{Re} \left\langle \sum_{\ell=1}^m (\X_\ell \X_\ell^* - \Id) \z, \w \right\rangle
\]
where $Q_{\omega,*}^{s,N}$ denotes a dense countable subset of 
\[
Q_{\omega}^{s,N} = \bigcup_{S \subset \Lambda, \omega(S) \leq s} Q_{S}, \qquad Q_S = \{ (\z,\vct{w}) : \|\z\|_2 = \|\vct{w}\|_2 = 1, \supp \z, \supp \vct{w} \subset S\}.
\]
With the functions $f_{\z,\vct{w}}(X) = \operatorname{Re} \langle (\X \X^{*}-\Id) \x, \vct{w} \rangle$ we can write $\delta_{\omega,s}$ as the supremum of an empirical process
\[
\delta_{\omega,s} = \frac{1}{m} \sup_{(\z,\vct{w}) \in Q_{\omega,*}^{s,N}} \sum_{\ell=1}^m f_{\z,\vct{w}}(\X_\ell).
\]
Since $\E \X_\ell \X_\ell^* = \Id$ we have $ \E f_{\z,\vct{w}}(\X_\ell) = 0$ for all $\z,\vct{w}$. Further,
for $(\x,\vct{w}) \in Q_{S}$ with $\omega(S) \leq s$ and for any realization of $\X_\ell$, we have
\[
|f_{\z,\vct{w}}(\X_\ell)| \leq \max\{1, \max_{\substack{\x: \supp \x \subset S\\ \|\x\|_2 = 1}} |\langle \X_\ell \X_\ell^* \x, \x\rangle| \}
= \max\{1, \max_{\substack{\x: \supp \x \subset S\\ \|\x\|_2 = 1}} |\langle \X_\ell, \x\rangle|^2\} \leq s.
\]
Moreover, 
\begin{align}
\E |f_{\z,\vct{w}}(\X_\ell)|^2 & = \E |\langle (\X_\ell \X_\ell^* - \Id) \z, \vct{w}\rangle|^2\notag\\
& = \E |\langle \X_\ell \X_\ell^* \z, \vct{w}\rangle|^2 
- 2 \Re (\E [\langle \X_\ell \X_\ell^* \z,\vct{w}\rangle]  \langle \vct{w},\vct{z}\rangle) + |\langle \z, \vct{w}\rangle|^2 \notag\\
& = \E [ |\langle \X_\ell, \z\rangle|^2 |\langle \X_\ell, \vct{w}\rangle|^2] - |\langle \z, \vct{w}\rangle|^2
\leq s \E |\langle \X_\ell, \z \rangle|^2 = s.\notag
\end{align}
With these bounds for $f_{\z,\vct{w}}(\X_\ell)$ together with \eqref{m:bound:expect}, 
the Bernstein inequality for the supremum of an empirical process, see e.g.~\cite[Theorem 6.25]{ra10}, \cite{bo02-1} or
\cite[Theorem 8.42]{fora13}, yields, for $\delta \in (0,1)$,
\begin{align}
\P(\delta_{\omega,s} \geq \delta)& \leq \P(\delta_{\omega,s} \geq \E \delta_{\omega,s} + \delta/2) \notag\\
&= \P(\sup_{(\z,\vct{w}) \in Q_{\omega,*}^{s,N}} \sum_{\ell=1}^m f_{\z,\vct{w}}(\X_\ell) \geq \E \sup_{(\z,\vct{w}) \in Q_{\omega,*}^{s,N}} \sum_{\ell=1}^m f_{\z,\vct{w}}(\X_\ell) + \delta m /2)\notag\\
&\leq \exp\left(- \frac{(\delta m/2)^2/2}{m s + 2 s (\delta m/2) + (\delta m/2) s/3}  \right)
\leq \exp\left(- \frac{\delta^2 m}{C_7 s} \right),
\end{align}
where $C_7=8(1+2+1/6) \leq 26$. The last term is bounded by $\gamma \in (0,1)$ if $m \geq C_7 \delta^{-2} s \log(1/\gamma)$.
Altogether we have $\delta_{\omega,s} \leq \delta$ with probability at least $1-\gamma$ if
\[
m \geq C_8 \delta^{-2} s \max\{\log^3(s) \log(N), \log(1/\gamma) \},
\]
where $C_8 = \max\{C_6, C_7\}$.
This completes the proof.
\end{proof}

{\section{Main interpolation estimates}\label{alltogether}}

Using the concepts of weighted null space and weighted restricted isometry property, and together with Theorem~\ref{mainthm} on the $\omega$-RIP for orthonormal systems, we now prove Theorems~\ref{thm:intro1} and \ref{thm:infRIPprob} concerning interpolation via weighted $\ell_1$ minimization, and a more general result for functions with coefficients in weighted $\ell_p$ spaces.  We first state a finite-dimensional result which allows for noisy measurements. 

\begin{theorem}\label{thm:interpolation2}
Suppose $(\psi_j)_{j \in \Lambda}$ is an orthonormal system with $| \Lambda | = N$ finite.  Consider weights $\omega_j \geq \|\psi_j\|_\infty$.
{For $s \geq 2  \max_j | \omega_j |^2$ and $\gamma \in (0,1)$, fix} a number of samples 
\begin{equation}
\label{bound:RIP:m}
m \geq c_0 s \max\{\log^3(s) \log(N), \log(1/\gamma)\}.
\end{equation}
Suppose that $t_\ell$, $\ell=1,\hdots,m$, are drawn independently from the orthogonalization measure associated to the $(\psi_j)$.  Let $\A \in \C^{m \times N}$ be the sampling matrix with entries $A_{\ell,k} = \psi_k(t_\ell).$
Then with probability exceeding $1 - \gamma$, the following holds for all 
functions $f  = \sum_{j \in \Lambda} x_j \psi_j$.  Given noisy samples $y_\ell = f(t_\ell) + \xi_{\ell}$, $\ell=1,\hdots,m$, with $\| {\bf \xi} \|_2 \leq \eta$, let $\x^\sharp$ be the solution of
\[
\min \|\z\|_{\omega,1} \mbox{ subject to }  \| \A \z - \y \|_2 \leq  \eta
\]
and set $f^\sharp(t) = \sum_{j \in \Lambda} x_j^\sharp \psi_j(t)$. Then
\begin{align}
\| f - f^\sharp\|_{L_\infty} \leq \triple f - f^\sharp \triple_{\omega,1} & \leq c_1 \sigma_s(f)_{\omega,1} + d_1  \eta \sqrt{\frac{s}{m}}, \notag\\
\| f - f^\sharp \|_{L_2} &  \leq c_2 \frac{ \sigma_s(f)_{\omega,1}}{\sqrt{s}} + d_2  \eta \sqrt{\frac{1}{m}} \notag.\notag
\end{align}
Above, $c_0, c_1, d_1, c_2,$ and $d_2$ are {universal} constants. 
\end{theorem}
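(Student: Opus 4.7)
The plan is to stitch together the three tools already developed in Sections~3--5: the weighted RIP for sampled orthonormal systems (Theorem~\ref{mainthm}), the implication $\w$-RIP~$\Rightarrow$ weighted robust null space property (Theorem~\ref{ripnsp}), and the resulting reconstruction bounds (Corollary~\ref{cor:nsp}). The proof is essentially the bookkeeping needed to align the normalization, the noise level, and the weighted function norms.

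First I would work with the normalized sampling matrix $\tilde\A = \frac{1}{\sqrt m}\A$, whose entries are $\frac{1}{\sqrt m}\psi_j(t_\ell)$. Apply Theorem~\ref{mainthm} at sparsity level $3s$ with target distortion $\delta = 1/3$: provided $m \geq C\,(1/3)^{-2}\,(3s)\max\{\log^3(3s)\log N,\log(1/\gamma)\}$, which is subsumed by \eqref{bound:RIP:m} after absorbing constants into $c_0$, one has $\delta_{\omega,3s}(\tilde\A) < 1/3$ with probability at least $1-\gamma$. The hypothesis $s \geq 2\max_j \omega_j^2$ ensures we are in the regime where the $\w$-RIP statements of Section~4 apply.

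Second, on this high-probability event, Theorem~\ref{ripnsp} yields the weighted robust null space property for $\tilde\A$ of order $s$ with constants $\rho = 2\delta_{\omega,3s}/(1-\delta_{\omega,3s}) < 1$ and $\tau = \sqrt{1+\delta_{\omega,3s}}/(1-\delta_{\omega,3s})$, both bounded by absolute numbers. Now rewrite the optimization: the constraint $\|\A\z - \y\|_2 \leq \eta$ is equivalent to $\|\tilde\A\z - \tilde\y\|_2 \leq \tilde\eta$ with $\tilde\y = \y/\sqrt m$ and $\tilde\eta = \eta/\sqrt m$, so $\x^\sharp$ is exactly the minimizer to which Corollary~\ref{cor:nsp} applies. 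Since the true coefficients $\x$ satisfy $\|\tilde\A\x - \tilde\y\|_2 = \|\vct\xi\|_2/\sqrt m \leq \tilde\eta$, the corollary gives
\begin{align*}
\|\x - \x^\sharp\|_{\omega,1} &\leq c_1\,\sigma_s(\x)_{\omega,1} + d_1\sqrt{s}\,\tilde\eta, \\
\|\x - \x^\sharp\|_2 &\leq c_2\,\sigma_s(\x)_{\omega,1}/\sqrt{s} + d_2\,\tilde\eta,
\end{align*}
where the $\ell_2$ bound uses the extra assumption $s \geq 2\|\w\|_\infty^2$. Substituting $\tilde\eta = \eta/\sqrt m$ produces the claimed factors $\sqrt{s/m}$ and $1/\sqrt m$.

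Finally I would translate coefficient bounds into function bounds. By orthonormality (Parseval), $\|f - f^\sharp\|_{L_2} = \|\x - \x^\sharp\|_2$. The weighted bound $\omega_j \geq \|\psi_j\|_\infty$ yields, as displayed after Theorem~\ref{thm:intro1}, $\|f - f^\sharp\|_{L_\infty} \leq \triple f - f^\sharp\triple_{\omega,1} = \|\x - \x^\sharp\|_{\omega,1}$; and by definition $\sigma_s(f)_{\omega,1} = \sigma_s(\x)_{\omega,1}$. Combining yields the two displayed inequalities. There is no real analytical obstacle here: each ingredient is already in place. The only delicate point is the $1/\sqrt m$ rescaling, so the main thing to watch is that the noise level $\eta$ for $\A$ corresponds to $\eta/\sqrt m$ for $\tilde\A$, which is what produces the characteristic $\sqrt{s/m}$ and $1/\sqrt m$ dependence in the error bounds and which also explains why no extra $\sqrt m$ appears in the sample complexity \eqref{bound:RIP:m}.
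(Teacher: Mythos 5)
Your proposal is correct and follows essentially the same route as the paper: weighted RIP for the normalized sampling matrix via Theorem~\ref{mainthm}, then the weighted robust null space property via Theorem~\ref{ripnsp}, then Corollary~\ref{cor:nsp} with the rescaled noise level $\eta/\sqrt{m}$, and finally the $L_\infty$ and $L_2$ bounds from $\omega_j \geq \|\psi_j\|_\infty$ and Parseval. In fact you are slightly more careful than the paper's own writeup in invoking the RIP at sparsity level $3s$ (as Theorem~\ref{ripnsp} actually requires) and in making the $1/\sqrt{m}$ normalization of the noise explicit.
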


\begin{proof}
By Theorem \ref{mainthm}, the normalized sampling matrix $\tilde{\A} = \frac{1}{\sqrt{m}} \A \in \C^{m \times N}$ satisfies the $\w$-RIP of order $s$ and constant $\delta_{\omega,s} \leq 1/3$ with probability exceeding $1 - \gamma$, at the stated number of measurements in \eqref{bound:RIP:m}.  Given that $\tilde{\A}$ satisfies the $\w$-RIP, Theorem \ref{ripnsp} implies that $\tilde{\A}$ satisfies the weighted null space property with constants $ \rho  < 1$ 
and $\tau  >0$.  
The bounds for $\triple f - f^\sharp \triple_{\omega,1}$ and $\| f - f^\sharp \|_{L_2}$ follow from Corollary~\ref{cor:nsp}. 
In order to obtain the bound on $\| f - f^\sharp\|_{L_\infty},$ recall that $\| \psi_j \|_{\infty} \leq \omega_j$ by assumption, so that the 
reconstruction error in  $\ell_{\omega,1}$ implies 
\begin{eqnarray}
\| f - f^{\sharp} \|_{\infty} &\leq& \sum_{j=1}^N | x_j - x_j^{\sharp} | \| \psi_j \|_{\infty} \leq \| \x - \x^{\sharp} \|_{\omega,1} = \triple f - f^{\sharp} \triple_{\omega,1}.\nonumber
\end{eqnarray}
\end{proof}

\begin{remark}
\emph{
Theorem \ref{thm:intro1} in the introduction corresponds to the special case of Theorem \ref{thm:interpolation2} when there is no noise, $\eta = 0$, and with $\gamma = N^{-\log^3(s)}$ chosen to balance both terms in the maximum in \eqref{bound:RIP:m} so that
$m \geq c_0 s \log^3(s) \log(N)$ implies the stated error bounds with probability at least $1- N^{-\log^3(s)}$. 
}
\end{remark}

\subsection{Proof of Theorem  \ref{thm:infRIPprob}}

If the index set $\Lambda$ is countably infinite then we first have to restrict to a suitable finite subset $\Lambda_0$ before applying {weighted} $\ell_1$ minimization for reconstruction. The suitable finite subset we consider is {$\Lambda_0 = \{j: \omega_j^2 \leq s/2\}$}.  
The basic idea is to treat the samples of $f = \sum_{j \in \Lambda} x_j \psi_j$ as perturbed or noisy samples of the finite-dimensional approximation $f_0 = \sum_{j \in \Lambda_0} x_j \psi_j$, splitting $\Lambda$ into $\Lambda_0$ and $\Lambda_R$, decomposing $f$ as $f = f_0 + f_R$,
and treating $f_R(t_j) = \sum_{j \in \Lambda_R} x_j \psi_j(t_j)$ as noise on the observed samples in hopes of applying Theorem~\ref{thm:interpolation2}.

The remainder of the proof consists in showing that the error 
$\sum_{\ell = 1}^m | f_R(t_{\ell}) |^2 = \eta^2$ is small with high probability.
Since the sampling points $t_1, t_2, \dots, t_m$ are drawn i.i.d. from the orthogonalization measure associated 
to $( \psi_j )$, the random variables $| f_R(t_{\ell}) |^2$ are independent and identically distributed, with 
\begin{equation}\label{Bernst:bound1}
 \E \Big(| f_R(t_{\ell}) |^2 \Big) = \sum_{j \in \Lambda_R} |x_j|^2.
\end{equation}
{Since $\omega_j^2 \geq s/2$ for $j \in \Lambda_R$ by construction, we have
\[
 \sum_{j \in \Lambda_R} |x_j |^2 \leq  \frac{2}{s} \sum_{j \in \Lambda_R} | x_j|^2 \omega_j^2
 \leq \frac{2}{s} \Big( \sum_{j \in \Lambda_R} | x_j |  \omega_j \Big)^2 = \frac{2}{s} \| f_R \|^2_{\omega, 1}.
\]
We further have the sup-norm bound 
$| f_R(t_{\ell}) | \leq \| f_R \|_{L_\infty} \leq  \sum_{j \in \Lambda_R} | x_j | \omega_j =  \| f_R \|_{\omega, 1}.$ Therefore, the variance
of the mean-zero variable $|f_R(t_{\ell}) |^2 - \E \Big(| f_R(t_{\ell}) |^2 \Big)$ is bounded by
\[
 \E \Big( |f_R(t_{\ell}) |^2 - \E \Big(| f_R(t_{\ell}) |^2\Big)\Big)^2 \leq  \E \Big(| f_R(t_{\ell}) |^4 \Big) \leq
 \|f_R \|_{\omega,1}^2 \E \Big(| f_R(t_{\ell}) |^2 \Big) \leq \frac{2}{s} \|f_R\|_{\omega,1}^4. 
\] 
We now apply Bernstein's inequality to obtain the probability bound
\[
\mathbb{P} \left\{\left|  \frac{1}{m} \sum_{\ell = 1}^m |f_R(t_{\ell}) |^2-  \sum_{j \in \Lambda_R} x_j^2 \right| \geq \kappa \right\} \leq \exp \left\{ - \frac{m \kappa^2/2}{2 \|f_R\|_{\omega,1}^4/s +   \kappa \| f_R \|_{\omega, 1}^2/3} \right\}.
\]
Setting $\kappa =  \frac{3}{s} \| f_R \|^2_{\omega, 1}$ in Bernstein's inequality gives
$$
\mathbb{P} \left\{\left|  \frac{1}{m} \sum_{\ell = 1}^m | f_R(t_{\ell})|^2 -  \sum_{j \in \Lambda_R} x_j^2 \right| \geq \frac{3}{s} \| f_R \|^2_{\omega, 1}\right\} \leq \exp \left\{ - \frac{3m}{ 2s} \right\}.
$$
For the number of measurements $m = c_0 s \log^3(s) \log(N)$ stated in Theorem \ref{thm:infRIPprob},
we therefore have by \eqref{Bernst:bound1}
$$
\mathbb{P} \left\{ \frac{1}{m}  \sum_{\ell = 1}^m | f_R(t_{\ell}) |^2 \geq \frac{4}{s} \| f_R \|^2_{\omega, 1}\right\} \leq N^{-\log^3(s)}.$$}%
Note that $\sigma_s(f)_{\omega,1} = \sigma_s(f_0)_{\omega,1} + \| f_R \|_{\omega,1}$ since the best weighted $s$-term 
approximations to $f$ and $f_0$ are the same.  Theorem \ref{thm:infRIPprob} results then by application of Theorem \ref{thm:interpolation2} with $\gamma = N^{-\log^3(s)}$.

\subsection{Interpolation estimates in weighted $\ell_p$ spaces with $p \leq 1$}
A weakness of Theorem \ref{thm:infRIPprob} is that for a random draw of the sampling points, it gives guarantees with high probability only for a fixed function.  In order to derive uniform recovery bounds, or guarantees \emph{for all functions} in a given class for a {single} set of measurements, as opposed to guarantees for a particular function, we need to introduce a positive weight vector $v$ which dominates the weight vector $\omega$ in a suitable way. In order to illustrate the idea we start by recalling the error bound
\begin{equation}\label{error:bound:recall}
\| f - f^\sharp\|_{L_\infty} \leq c_1 \sigma_s(f)_{\omega,1} + d_1 \eta \sqrt{\frac{s}{m}},
\end{equation}
valid in the finite-dimensional setting:  $f = \sum_{j \in \Lambda} x_j \psi_j,$ $| \Lambda | = N \in \N$, and the samples are perturbed, $\sum_{\ell=1}^m |y_\ell - f(t_\ell)|^2 \leq \eta^2$.  In the infinite-dimensional setting, where $f = \sum_{j \in \Lambda} x_j \psi_j$ but $ \Lambda$ possibly countably infinite, we will treat the samples $y_\ell = f(t_\ell)$ as perturbed samples of a finite-dimensional approximation $f_0 = \sum_{j \in \Lambda_0} x_j \psi_j$ as in the probabilistic error analysis in the proof of Theorem \ref{thm:infRIPprob}, 
for some suitable $\Lambda_0 \subset \Lambda$. For a parameter $\alpha > 0$, the approximation error can then be bounded using
\begin{equation}\label{bound:finite:f0}
\| f - f_0 \|_{L_\infty} \leq \sum_{ j \notin \Lambda_0} |x_j| \|\psi_j\|_\infty \leq \max_{j \notin \Lambda_0}\{ \|\psi_j\|_\infty v_j^{-\alpha} \} 
\sum_{j \notin \Lambda_0} |x_j| v_j^\alpha \leq \max_{j \notin \Lambda_0} \{ w_j v_j^{-\alpha} \} \triple f \triple_{v^{\alpha},1}.
\end{equation}

On the right hand side, we obtain the norm $\triple f \triple_{v^{\alpha},1}$.
Recall, however, that for our compressive sensing approximation we can impose $\triple f \triple_{v,p}$ to be small for a small value of $p < 1$.  The following estimate will be useful for comparing weighted $p$ and $1$-norms.
\begin{lemma}\label{lem:boundp1} For a weight $\w$ and $0<p<1$, set $\alpha = \frac{2}{p} - 1$. Then $\| \x \|_{\omega^{\alpha},1} \leq \|\x\|_{\omega,p}.$
\end{lemma}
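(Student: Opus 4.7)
The plan is to reduce the inequality to a standard monotonicity property of $\ell_p$ (quasi-)norms for $p < 1$, after absorbing the weights into the entries of the sequence. The key observation is that with $\alpha = 2/p - 1$, the exponent $2-p$ in the definition of $\|\cdot\|_{\omega,p}$ satisfies $(2-p)/p = \alpha$, so the weights pair up nicely.

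First I would unravel the definitions. By the definition of the weighted norms,
\[
\|\x\|_{\omega^\alpha,1} = \sum_j (\omega_j^\alpha)^{2-1} |x_j| = \sum_j \omega_j^\alpha |x_j|,
\]
while
\[
\|\x\|_{\omega,p} = \Bigl(\sum_j \omega_j^{2-p} |x_j|^p\Bigr)^{1/p} = \Bigl(\sum_j (\omega_j^{(2-p)/p} |x_j|)^p\Bigr)^{1/p} = \Bigl(\sum_j (\omega_j^{\alpha} |x_j|)^p\Bigr)^{1/p},
\]
using $(2-p)/p = 2/p - 1 = \alpha$. Thus, setting $y_j := \omega_j^\alpha |x_j| \geq 0$, the claim is just $\|\vct{y}\|_1 \leq \|\vct{y}\|_p$.

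Second, I would invoke the standard nesting of $\ell_p$ (quasi-)norms: for any nonnegative sequence $\vct{y}$ and $0 < p \leq q \leq \infty$, one has $\|\vct{y}\|_q \leq \|\vct{y}\|_p$. Applying this with $q = 1$ and the given $p < 1$ yields $\|\vct{y}\|_1 \leq \|\vct{y}\|_p$, finishing the proof. There is no real obstacle here; the only thing to be careful about is the algebraic bookkeeping of the exponent $2-p$, which is precisely what forces the specific choice $\alpha = 2/p - 1$ (and is the reason this exponent was chosen in the definition of $\ell_{\omega,p}$ in the first place, as remarked after equation \eqref{weighted_norm}). For completeness one could also note that the inequality $\|\vct{y}\|_1 \leq \|\vct{y}\|_p$ is an immediate consequence of $\sum_j y_j \leq (\sum_j y_j^p)^{1/p}$ when $y_j \geq 0$ and $p \leq 1$, which in turn follows from the subadditivity of $t \mapsto t^p$ on $[0,\infty)$.
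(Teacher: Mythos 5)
Your proof is correct and is essentially the paper's argument in cleaner packaging: the paper's chain $\sum_j |x_j|\omega_j^\alpha \leq (\max_j |x_j|\omega_j^{2/p-1})^{1-p}\|\x\|_{\omega,p}^p \leq \|\x\|_{\omega,p}$ is exactly the standard proof of the nesting $\|\vct{y}\|_1 \leq \|\vct{y}\|_p$ applied to $y_j = \omega_j^{\alpha}|x_j|$, which is the substitution you make explicit before invoking that nesting directly. The exponent bookkeeping $(2-p)/p = \alpha$ is verified correctly, so there is no gap.
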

\begin{proof}
First observe that
\[
\left(\max_{j \in \Lambda_0} |x_j| \omega_j^{2/p-1}\right)^p = \max_{j \in \Lambda_0} |x_j|^p \omega_j^{2-p} \leq \sum_{j \in \Lambda_0} |x_j|^p \omega_j^{2-p}
= \|\x\|_{\omega,p}^p.
\]
The claimed inequality follows then from
\begin{align}
\|\x\|_{\omega^{\alpha},1} & = \sum_{j \in \Lambda_0} |x_j| \omega_j^\alpha 
\leq \left( \max_{j \in \Lambda_0} |x_j|^{1-p} \omega_j^{\alpha-2+p} \right) \sum_{j \in \Lambda_0} |x_j|^p \omega_j^{2-p}\notag\\
& \leq \left(\max_j |x_j| \omega_j^{2/p-1} \right)^{1-p} \|\x\|_{\omega,p}^p
\leq \|\x\|_{\omega,p}^{1-p} \|\x\|_{\omega,p}^p = \|\x\|_{\omega,p}.\notag
\end{align}
\end{proof}
Assuming that $v \geq \omega$ and $s \geq 2 \max_{j \in \Lambda_0} w_j^2$, say, the first term in the finite-dimensional error bound \eqref{error:bound:recall} with $f$ replaced by $f_0$ 
can be estimated using the Stechkin-type estimate of Theorem~\ref{thm:weighted:Stechkin} by
\begin{equation}\label{dd}
 \sigma_s(f_0)_{\omega,1} \leq c s^{1-1/p} \triple f_0 \triple_{\omega,p} \leq c s^{1-1/p} \triple f \triple_{v,p}.
\end{equation}
We aim to provide a bound of the second term on the right-hand side of \eqref{error:bound:recall} of the same order. Now $\eta := \sqrt{\sum_{\ell=1}^m |f_0(t_\ell) - f(t_\ell)|^2}
\leq \sqrt{m} \|f- f_0\|_{L_\infty},$ so by \eqref{bound:finite:f0} and Lemma \ref{lem:boundp1}, 
\[
\sqrt{\frac{s}{m}} \eta \leq \sqrt{s}  \max_{j \notin \Lambda_0} \{ w_j v_j^{-\alpha} \} \triple f \triple_{v^\alpha,1} \leq   \sqrt{s}  \max_{j \notin \Lambda_0} \{ w_j v_j^{-\alpha} \} \triple f \triple_{v,p},
\]
where we have applied Lemma~\ref{lem:boundp1} with $\alpha = \frac{2}{p}-1$ in the last step. The choice $\Lambda_0 = \Lambda_0^{(s,p)}$ with
\begin{equation}\label{def:Gammasp}
\Lambda_0^{(s,p)} := \{ j \in \Lambda: \omega_j v_j^{1-2/p} \geq s^{1/2-1/p} \} 
\end{equation}
therefore gives
\[
\eta \leq \sqrt{m} s^{1/2-1/p}  \triple f \triple_{v,p} \quad \mbox{so that} \quad \sqrt{\frac{s}{m}} \eta \leq s^{1-1/p} \triple f \triple_{v,p},
\]
and we have balanced the two error terms in \eqref{error:bound:recall} {after applying \eqref{dd}}. We still need 
to choose the weight $v$ so that $\Lambda_0^{s,p}$ is a finite set (ideally with size polynomial in $s$) 
and such that
the technical assumption $\max_{j \in \Lambda_0^{(s,p)}} w_j^2 \leq s/2$ is satisfied. 
The finiteness of $\Lambda_0^{(s,p)}$ is ensured when $(\omega_j v_j^{1-2/p})_{j \in \Lambda}$ 
is a sequence which converges to $0$ as $|j| \to \infty$. Moreover, if $v$ satisfies
\begin{equation}\label{cond:vj}
v_j^{2/p-1} \geq 2^{1/p-1/2} \omega_j^{2/p} = 2^{1/p-1/2} \omega_j \cdot \omega_j^{2(1/p-1/2)},
\end{equation}
then we have for all $j$ satisfying $\omega_j^2 \geq s/2$ that
\[
v_j^{2/p-1} \geq 2^{1/p-1/2} \omega_j (s/2)^{1/p-1/2} = \omega_j s^{1/p-1/2}.  
\]
In light of \eqref{def:Gammasp}, all $j \in \Lambda_0^{(s,p)}$ satisfy $\max_{j \in \Lambda_0^{(s,p)}} \omega_j^2 \leq s/2$.
{Inequality} \eqref{cond:vj} is satisfied if $v_j \geq 2 \omega_j^{1/(1-p/2)}.$
In particular, the choice
\[
v_j = 2 \omega_j^2
\]
is valid for all values of $p \in (0,1]$.  In this case $(\omega_j v_j^{1-2/p})_{j \in \Lambda}$ converges to $0$ as $|j| \rightarrow \infty$ if and only if $(\omega_j^{-1})_{j \in \Lambda}$ converges to $0$ as $| j | \rightarrow \infty$.  We can now state the main result.

\begin{theorem} 
\label{thm:intro2}
Let $p \in (0,1]$ and let $\omega,v$ be weights satisfying $\omega_j \geq \|\psi_j\|_\infty$ and $v_j \geq 2 \omega_j^{1/(1-p/2)}$.
For given $s \in \N$, let $\Lambda_0^{(s,p)} = \Lambda_0$ of size $N^{(s,p)}$ be the set of indices
$$
\Lambda_0 := \{ j \in \Lambda: \omega_j v_j^{1-2/p} \geq s^{1/2-1/p} \} .
$$
Fix a number $m$ of samples
\begin{equation}\label{bound:samples:inf}
m 
\geq c_0 s \max\{\log^3(s) \log(N^{(s,p)}), \log(1/\gamma)\}.
\end{equation}

\noindent
Suppose that the sampling points $t_\ell$, $\ell=1,\hdots,m,$ are drawn independently at random according to the orthogonalization measure for $( \psi_j )$. Then with probability exceeding $1-\gamma$ the following holds for all $f$ with 
$\triple f \triple_{v,p} < \infty$. 

Let $y_\ell = f(t_\ell)$, $\ell =  1,\hdots,m,$ and $\A$ be the $m \times N^{(s,p)}$ sampling matrix with entries $A_{j,\ell} = \psi_j(t_\ell)$, $j \in \Lambda_0$.
For $\tau \geq 1$, 
let $\x^\sharp$ be the solution to
\begin{equation}\label{l1:infinite}
\min \|\z\|_{\omega,1} \mbox{ subject to } \|\A \z - \y\|_2 \leq \tau s^{1/2-1/p} \sqrt{m} \triple f \triple_{v,p}
\end{equation}
and set $f^\sharp = \sum_{j \in \Lambda_0} x_j^\sharp \psi_j$.
Then $\|f - f^\sharp \|_\infty \leq C_\tau s^{1-1/p} \triple f \triple_{v,p}.$
\end{theorem}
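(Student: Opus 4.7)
The plan is to reduce the infinite-dimensional problem to the finite-dimensional noisy setting of Theorem~\ref{thm:interpolation2}, by treating the tail $f_R := \sum_{j \notin \Lambda_0} x_j \psi_j$ as deterministic noise corrupting samples of the truncation $f_0 := \sum_{j \in \Lambda_0} x_j \psi_j$, and then invoking the $\omega$-RIP for the orthonormal subsystem indexed by $\Lambda_0$. The set $\Lambda_0$ is designed precisely so that the two natural error contributions, namely the sparse-approximation error of $f_0$ and the propagated tail noise, balance at the order $s^{1-1/p}\triple f \triple_{v,p}$.

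First I would verify the structural hypotheses. Since $\omega_j \geq 1$ and $1/(1-p/2)\geq 1$, the assumption $v_j \geq 2\omega_j^{1/(1-p/2)}$ gives $v_j \geq \omega_j$, which implies $\triple f_0 \triple_{\omega,p} \leq \triple f \triple_{v,p}$ and, as observed in the discussion preceding the theorem (cf.\ \eqref{cond:vj}), also forces $\max_{j \in \Lambda_0}\omega_j^2 \leq s/2$, so the technical requirement of Theorem~\ref{thm:interpolation2} is automatically satisfied. I would then apply Theorem~\ref{mainthm} to the orthonormal subsystem $(\psi_j)_{j \in \Lambda_0}$, which under the sample count \eqref{bound:samples:inf} yields, with probability at least $1-\gamma$, that the normalized matrix $(1/\sqrt{m})\A$ has $\w$-RIP constant $\delta_{\omega,s} \leq 1/3$. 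By Theorem~\ref{ripnsp} this matrix then satisfies the weighted robust null space property of order $s$, with absolute constants $\rho < 1$ and $\tau$.

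Conditional on this event, I would decompose the observations as $y_\ell = f_0(t_\ell) + f_R(t_\ell)$ and bound the noise vector $\vct{\xi} := (f_R(t_\ell))_{\ell=1}^m$ deterministically by $\|\vct{\xi}\|_2 \leq \sqrt{m}\,\|f_R\|_{L_\infty} \leq \sqrt{m}\,\triple f_R\triple_{\omega,1}$. The definition of $\Lambda_0$ combined with Lemma~\ref{lem:boundp1} applied with $\alpha = 2/p-1$ gives
\[
\triple f_R\triple_{\omega,1} = \sum_{j \notin \Lambda_0} |x_j|\,\omega_j \leq \max_{j \notin \Lambda_0}\{\omega_j v_j^{1-2/p}\}\sum_{j \notin \Lambda_0}|x_j|\,v_j^{2/p-1} \leq s^{1/2-1/p}\,\triple f\triple_{v,p},
\]
so the coefficient vector of $f_0$ is feasible in \eqref{l1:infinite} for any $\tau \geq 1$. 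Applying the $L_\infty$/weighted $\ell_1$ bound of Theorem~\ref{thm:interpolation2} with $\eta = \tau s^{1/2-1/p}\sqrt{m}\,\triple f\triple_{v,p}$, and combining with the Stechkin-type estimate of Theorem~\ref{thm:weighted:Stechkin} applied to $\sigma_s(f_0)_{\omega,1}$ (using $\triple f_0\triple_{\omega,p} \leq \triple f\triple_{v,p}$ and $\max_{j\in\Lambda_0}\omega_j^2 \leq s/2$), both the approximation term and the noise term are controlled by a constant multiple of $s^{1-1/p}\triple f\triple_{v,p}$. A final triangle inequality $\|f-f^\sharp\|_{L_\infty} \leq \triple f_0 - f^\sharp\triple_{\omega,1} + \|f_R\|_{L_\infty}$ then yields the stated bound with $C_\tau$ depending linearly on $\tau$.

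The main obstacle, and the reason the hypotheses of the theorem take precisely the stated form, is that uniform recovery for all $f$ from a single random draw rules out sharpening the tail estimate via Bernstein concentration as in the proof of Theorem~\ref{thm:infRIPprob}: we must settle for the deterministic sup-norm bound $\|\vct{\xi}\|_2 \leq \sqrt{m}\|f_R\|_{L_\infty}$. Matching this coarser tail bound to the sparse-approximation error at the same rate $s^{1-1/p}$ is exactly what dictates the cut-off defining $\Lambda_0$ and the growth relation $v_j \gtrsim \omega_j^{1/(1-p/2)}$; once these are engineered correctly, everything else is routine application of the weighted RIP/NSP machinery developed in Sections~3--5.
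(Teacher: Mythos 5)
Your proposal is correct and follows essentially the same route as the paper: truncate to $\Lambda_0^{(s,p)}$, treat the tail $f_R$ as deterministic noise bounded via $\|\vct{\xi}\|_2 \leq \sqrt{m}\,\|f_R\|_{L_\infty}$, verify feasibility and invoke the $\w$-RIP/weighted null space machinery of Theorem~\ref{thm:interpolation2}, and balance the Stechkin term against the propagated tail using Lemma~\ref{lem:boundp1} and the definition of $\Lambda_0$. The paper's proof is just a terser version of this, deferring the balancing computation to the discussion preceding the theorem statement.
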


\begin{proof}
Consider $f = \sum_{j \in \Lambda} x_j \psi_j$, and associated $f_0 = \sum_{j \in \Lambda_0^{(s,p)}} x_j \psi_j$.  We have
\[
\|f - f^\sharp\|_{L_\infty} \leq \|f-f_0\|_{L_\infty} + \|f_0 - f^\sharp\|_{L_\infty}.
\]
Since with probability at least $1-\gamma$ under the stated assumption on $m$, the matrix $\A$ has the $\w$-RIP and
thereby the weighted null space property of order $s$, we have
\[
\|f_0 - f^\sharp\|_{L_\infty} \leq C \tau s^{1-1/p} \triple f \triple_{v,p}
\]
by the observations preceding the statement of the theorem. Furthermore,
\[
\|f-f_0\|_\infty \leq s^{1/2-1/p}  \triple f \triple_{v,p} \leq s^{1-1/p} \triple f \triple_{v,p}
\]
by \eqref{bound:finite:f0}, Lemma~\ref{lem:boundp1}, and the definition of $\Lambda_0^{(s,p)}$. This concludes the proof with $C_\tau = C \tau + 1$.
\end{proof}

\section*{Acknowledgments}
Holger Rauhut acknowledges funding by the European Research Council through the grant StG 258926 and support by the Hausdorff Center for Mathematics at the University of Bonn. Rachel Ward was supported in part by an Alfred P. Sloan Research Fellowship, a Donald D. Harrington Faculty Fellowship, ONR Grant N00014-12-1-0743, NSF CAREER Award, and an AFOSR Young Investigator Program Award. Both Holger Rauhut and Rachel Ward would like to thank the Institute for Mathematics and Its Applications for its hospitality during a stay where this work was initiated.
\bibliography{weightedR}
\bibliographystyle{abbrv}

%


\end{document}